\def\@secnumfont{\bfseries\scshape}
\def\section{\@startsection{section}{1}
  \z@{.9\linespacing\@plus\linespacing}{.5\linespacing}%
  {\normalfont\large\bfseries\scshape\centering}}
\def\subsection{\@startsection{subsection}{2}%
  \z@{.5\linespacing\@plus.7\linespacing}{-.5em}%
  {\normalfont\bfseries\scshape}}
\def\subsubsection{\@startsection{subsubsection}{3}%
  \z@{.5\linespacing\@plus.7\linespacing}{-.5em}%
  {\normalfont\scshape}}
\def\specialsection{\@startsection{section}{1}%
  \z@{\linespacing\@plus\linespacing}{.5\linespacing}%
  {\normalfont\centering\large\bfseries\scshape}}
\renewenvironment{proof}[1][\proofname]{\par
\pushQED{\qed}%
\normalfont \topsep4\p@\@plus4\p@\relax
\trivlist
\item[\hskip\labelsep
\bfseries
#1\@addpunct{.}]\ignorespaces
}{%
\popQED\endtrivlist\@endpefalse
}
\newcommand \Dotfill {\leavevmode \leaders \hb@xt@ 6pt{\hss .\hss }\hfill \kern \z@}
\def\@tocline#1#2#3#4#5#6#7{\relax
  \ifnum #1>\c@tocdepth 
  \else
    \par \addpenalty\@secpenalty\addvspace{#2}%
    \begingroup \hyphenpenalty\@M
    \@ifempty{#4}{%
      \@tempdima\csname r@tocindent\number#1\endcsname\relax
    }{%
      \@tempdima#4\relax
    }%
    \parindent\z@ \leftskip#3\relax \advance\leftskip\@tempdima\relax
    \rightskip\@pnumwidth plus4em \parfillskip-\@pnumwidth
    #5\leavevmode\hskip-\@tempdima
      \ifcase #1
       \or\or \hskip 1.65em \or \hskip 3.3em \else \hskip 4.95em \fi%
      #6\nobreak\relax
    \Dotfill
    \hbox to\@pnumwidth{\@tocpagenum{#7}}\par
    \nobreak
    \endgroup
  \fi}
\def\l@section{\@tocline{1}{0pt}{1pc}{}{\scshape}}
\renewcommand{\tocsection}[3]{%
\indentlabel{\@ifnotempty{#2}{\ignorespaces#1 #2.\hskip 0.7em}}#3}
\def\l@subsection{\@tocline{2}{0pt}{1pc}{5pc}{}}
\def\l@subsubsection{\@tocline{3}{0pt}{1pc}{7pc}{}}
\numberwithin{equation}{section}
\definecolor{shadecolor}{gray}{.94}
\newenvironment{myshade}{%
  \topsep4\p@\@plus4\p@\relax%
  \MakeFramed{\advance\hsize-\width \FrameRestore}}%
 {\par\unskip\endMakeFramed}%
\newtheoremstyle{mytheorem}{0}{0}%
     {\itshape}
     {}
     {\bfseries}
     {. }
     {0.3ex}
     {\thmname{{\bfseries #1}}\thmnumber{ {\bfseries #2}}\thmnote{ (#3)}}  
\theoremstyle{mytheorem}
\newtheorem{theo}{Theorem}[section]
\newenvironment{theorem}{\begin{myshade}\begin{theo}}{\end{theo}\end{myshade}}
\newtheorem{prop}[theo]{Proposition}
\newenvironment{proposition}{\begin{myshade}\begin{prop}}{\end{prop}\end{myshade}}
\newtheorem{lem}[theo]{Lemma}
\newenvironment{lemma}{\begin{myshade}\begin{lem}}{\end{lem}\end{myshade}}
\newtheorem{defin}[theo]{Definition}
\newenvironment{definition}{\begin{myshade}\begin{defin}}{\end{defin}\end{myshade}}
\newtheorem{cor}[theo]{Corollary}
\newtheorem{assump}[theo]{Assumption}
\newenvironment{assumption}{\begin{myshade}\begin{assump}}{\end{assump}\end{myshade}}
\newtheoremstyle{myremark}{.7\linespacing\@plus.3\linespacing}{.7\linespacing\@plus.3\linespacing}%
     {\itshape}
     {}
     {\bfseries}
     {. }
     {0.3ex}
     {\thmname{{\bfseries #1}}\thmnumber{ {\bfseries #2}}\thmnote{ (#3)}}  
\theoremstyle{myremark}
\newtheorem{remark}[theo]{Remark}
\newtheoremstyle{mydefinition}{.7\linespacing\@plus.3\linespacing}{.7\linespacing\@plus.3\linespacing}%
     {\rmfamily}
     {}
     {\bfseries}
     {. }
     {0.3ex}
     {\thmname{{\bfseries #1}}\thmnumber{ {\bfseries #2}}\thmnote{ (#3)}}  
\theoremstyle{mydefinition}
\newtheorem{example}[theo]{Example}
\newcommand{\bbE}{{\ensuremath{\mathbb E}} }
\newcommand{\bbP}{{\ensuremath{\mathbb P}} }
\newcommand{\bbT}{{\ensuremath{\mathbb T}} }
\newcommand{\cA}{{\ensuremath{\mathcal A}} }
\newcommand{\cE}{{\ensuremath{\mathcal E}} }
\newcommand{\cF}{{\ensuremath{\mathcal F}} }
\newcommand{\cV}{{\ensuremath{\mathcal V}} }
\newcommand{\cW}{{\ensuremath{\mathcal W}} }
\newcommand{\cZ}{{\ensuremath{\mathcal Z}} }
\DeclareMathSymbol{\leqslant}{\mathalpha}{AMSa}{"36} 
\DeclareMathSymbol{\geqslant}{\mathalpha}{AMSa}{"3E} 
\DeclareMathSymbol{\eset}{\mathalpha}{AMSb}{"3F}     
\newcommand{\sumtwo}[2]{\sum_{\substack{#1 \\ #2}}} 
\newcommand{\be}{\begin{equation}}
\newcommand{\ee}{\end{equation}}
\newcommand{\R}{\mathbb{R}}
\newcommand{\Z}{\mathbb{Z}}
\newcommand{\N}{\mathbb{N}}
\newcommand{\PEfont}{\mathrm}
\newcommand{\p}{\ensuremath{\PEfont P}}
\newcommand{\E}{\ensuremath{\PEfont  E}}
\renewcommand{\P}{\p}
\DeclareMathOperator{\bbvar}{\ensuremath{\mathbb{V}ar}}
\DeclareMathOperator{\bbcov}{\ensuremath{\mathbb{C}ov}}
\newcommand{\ind}{\mathds{1}}
\renewcommand{\epsilon}{\varepsilon}
\renewcommand{\theta}{\vartheta}
\renewcommand{\rho}{\varrho}
\newenvironment{myenumerate}{%
\renewcommand{\theenumi}{\arabic{enumi}}%
\renewcommand{\labelenumi}{{\rm(\theenumi)}}%
\begin{list}{\labelenumi}
	{%
	\setlength{\itemsep}{0.4em}%
	\setlength{\topsep}{0.5em}%
	\setlength\leftmargin{2.45em}%
	\setlength\labelwidth{2.05em}%
	\setlength{\labelsep}{0.4em}%
	\usecounter{enumi}%
	}%
	}%
{\end{list}
}
\newenvironment{aenumerate}{%
\renewcommand{\theenumi}{\alph{enumi}}%
\renewcommand{\labelenumi}{{\rm(\theenumi)}}%
\begin{list}{\labelenumi}
	{%
	\setlength{\itemsep}{0.4em}%
	\setlength{\topsep}{0.5em}%
	\setlength\leftmargin{2.45em}%
	\setlength\labelwidth{2.05em}%
	\setlength{\labelsep}{0.4em}%
	\usecounter{enumi}%
	}%
	}%
{\end{list}
}
\newenvironment{myitemize}{%
\begin{list}{$\bullet$}%
 	{%
	\setlength{\itemsep}{0.4em}%
	\setlength{\topsep}{0.5em}%
	\setlength\leftmargin{2.65em}%
	\setlength\labelwidth{2.65em}%
	\setlength{\labelsep}{0.4em}%
	}%
	}%
{\end{list}}
\renewenvironment{itemize}{
\begin{myitemize}}%
{\end{myitemize}}
\date{\today}
\newcommand\dd{\mathrm{d}}
\newcommand\bell{\boldsymbol{\ell}}
\newcommand\rme{\mathrm{e}}
\newcommand\Inf{\mathrm{Inf}}
\newcommand\pig[1]{\scalerel*[5pt]{\big#1}{%
  \ensurestackMath{\addstackgap[1.5pt]{\big#1}}}}
\newcommand\pigl[1]{\mathopen{\pig{#1}}}
\newcommand\pigr[1]{\mathclose{\pig{#1}}}
\definecolor{magenta}{rgb}{1.0, 0.0, 1.0}
\begin{document}

\author[F. Caravenna]{Francesco Caravenna}
\address{Dipartimento di Matematica e Applicazioni\\
 Universit\`a degli Studi di Milano-Bicocca\\
 via Cozzi 55, 20125 Milano, Italy}
\email{francesco.caravenna@unimib.it}

\author[A. Donadini]{Anna Donadini}
\address{Dipartimento di Matematica e Applicazioni\\
 Universit\`a degli Studi di Milano-Bicocca\\
 via Cozzi 55, 20125 Milano, Italy}
\email{a.donadini@campus.unimib.it}

\title[Enhanced noise sensitivity and SHF]{Enhanced noise sensitivity, 2D directed polymers\\
and Stochastic Heat Flow}

\keywords{Directed Polymer in Random Environment, Disordered System, Influence, Noise Sensitivity, Stochastic Heat Equation, Stochastic Heat Flow}
\subjclass[2010]{Primary: 05D40;  Secondary: 82B44, 60H15}

\begin{abstract}
We investigate noise sensitivity beyond the classical setting of
binary random variables,
extending the celebrated result by Benjamini, Kalai, and Schramm
to a wide class of functions of general random variables. 
Our approach yields improved bounds with optimal rates.
We also consider an enhanced form of noise
sensitivity which yields asymptotic independence, rather than mere decorrelation.

We apply these result to establish enhanced noise sensitivity for the partition functions of 2D 
directed polymers, in the critical regime where they converge to the 
critical 2D Stochastic Heat Flow.
As a consequence, we prove that the Stochastic Heat Flow is
independent of the white noise arising from the disorder.
\end{abstract}

\maketitle

\section{Introduction}

Given a function $f(\omega)$ that depends
on independent random variables~$\omega = (\omega_1, \omega_2, \ldots)$, 
the concept of \emph{noise sensitivity} describes the intriguing phenomenon where
a small perturbation of the variables~$\omega_i$ completely alters the function's output.
This phenomenon is particularly relevant in the study of 
Boolean functions \cite{O}, where small changes in input bits can make the output unpredictable, and in 
statistical physics \cite{GS}, where it describes the sensitivity of systems to random noise, especially 
in the proximity of a phase transition.

\smallskip

The literature on noise sensitivity has largely focused on 
binary variables \( \omega_i \) taking two values,  say \( x_+ \) and \( x_- \):
\begin{equation} \label{eq:binary0}
	\bbP(\omega_i = x_+) = p \quad \text{and} \quad
	\bbP(\omega_i = x_-) = 1-p \quad \text{for some \( p \in (0, 1) \)} \,.
\end{equation}
Most results also concern Boolean functions \( f(\omega) \in \{0, 1\} \)
(see references below).
In this paper, we extend the classical BKS criterion for noise sensitivity 
\cite{BKS} to a general setting, allowing for a wide
class of \emph{non-Boolean functions \( f(\omega) \in \R \) 
of general random variables~\( \omega_i \)} (see Theorems~\ref{th:ns-gen}-\ref{th:BKS-gen}).
When specialised to the binary setting, our results yield
\emph{asymptotically optimal rates} (see Theorems~\ref{th:BKS-optimal}-\ref{th:sharp}).

\smallskip

The concept of noise sensitivity is usually formulated
in terms of vanishing correlations (see \eqref{eq:ns}) which
--- for Boolean function $f(\omega) \in \{0,1\}$ --- corresponds to asymptotic independence.
For general real functions \( f(\omega) \in \R \), however, it is natural to consider
an \emph{enhanced form of noise sensitivity} (see \eqref{eq:ns+}), where $f(\omega)$
is composed with suitable test functions to ensure 
asymptotic independence, rather than mere decorrelation.
We show that \emph{the BKS criterion implies enhanced 
noise sensitivity}, at least when the random variables $\omega_i$
take finitely many values (see Theorem~\ref{th:BKS+}).

\smallskip

Our results on noise sensitivity are presented in Section~\ref{sec:ns}.
In the following Section~\ref{sec:appl}, we apply them to a model from statistical mechanics: the two-dimensional directed polymer in random environment.
We first establish an instance of \emph{enhanced noise sensitivity for the partition functions} (see Theorem~\ref{thm:noise_sensitivity_polymer}), in the critical regime where they converge to the Critical 2D Stochastic Heat Flow (SHF) \cite{CSZ23}.
Next, we prove the \emph{independence of the SHF from the white noise originating from the environment} (see Theorem~\ref{th:SHF}), which indicates that the SHF is not the solution of a stochastic PDE driven by white noise.

\smallskip

In the related setting of the two-dimensional Stochastic Heat Equation with spatially regularised white noise, the convergence of the solution to the SHF was recently established in~\cite{T24}.
An analogue of our Theorem~\ref{th:SHF} in this context --- proving independence of the SHF from the white noise --- was obtained independently and simultaneously in~\cite{GT}, as a corollary of the main result of the paper, which shows that the SHF is a so-called black noise.
We refer the reader to the beginning of Section~\ref{sec:appl} for a more detailed discussion of the SHF and further references.

\smallskip

The following Sections~\ref{sec:setting}-\ref{sec:optimality}
are devoted to the proof of our results, while some technical points
are deferred to the appendices.

\subsection*{Acknowledgements}

We are very much indebted to Christophe Garban, who introduced
one of us (F.C.) to noise sensitivity and suggested to investigate it for directed polymers.

We are also grateful to Malo Hillairet, Giovanni Peccati and Hugo Vanneuville for enlightening discussions
and for pointing out relevant references.

We thank Daniel Ahlberg, Malo Hillairet and Ekaterina Toropova for generously 
sharing a preliminary version
of their manuscript \cite{AHT}, in which they independently prove a generalisation of the BKS criterion,
using different techniques.

We gratefully acknowledge the support of INdAM/GNAMPA.

\section{Main results for noise sensitivity}
\label{sec:ns}

In this section, we present our main results concerning noise sensitivity.
Let us first recall classical results in the setting of binary random variables $\omega_i$'s,
see \eqref{eq:binary0}.

\subsection{Binary setting}

Given independent random variables~$\omega = (\omega_1, \omega_2, \ldots)$,
we denote by $\omega^\epsilon = (\omega^\epsilon_1, \omega^\epsilon_2, \ldots)$ the 
configuration obtained 
by independently resampling each $\omega_i$ with probability~$\epsilon$
(see Definition~\ref{def:rand}).
We focus in this subsection on binary $\omega_i$'s, see \eqref{eq:binary0}.

A sequence of Boolean functions $f_N(\omega) \in \{0,1\}$ is called \emph{noise sensitive} if
\begin{equation} \label{eq:ns}
	\forall \epsilon > 0 \colon \qquad
	\lim_{N\to\infty} \, \bbcov\big[ f_N(\omega^\epsilon), f_N(\omega) \big] = 0 \,,
\end{equation}
which implies the asymptotic independence of $f_N(\omega^\epsilon)$ and $f_N(\omega)$.
This notion was introduced by I.~Benjamini, G.~Kalai and O.~Schramm  
in their seminal paper \cite{BKS}.

It was shown in the same paper  that noise sensitivity is closely related to the probability that 
flipping a single variable $\omega_k$ changes the output of the function $f(\omega)$. 
To formalize this, the \emph{influence} of $\omega_k$ on $f(\omega)$ is defined as
\begin{equation} \label{eq:GS-influence}
	I_k(f) := \bbP \big( f(\omega^k_+) \ne f(\omega^k_-) \big) \,,
\end{equation}
where $\omega^k_\pm$ denotes the configuration $\omega = (\omega_1, \omega_2, \ldots)$ 
with $\omega_k$ fixed to $x_\pm$.
The main result of \cite{BKS} establishes a sufficient
``BKS criterion'' for noise sensitivity, based 
on the \emph{sum of squared influences}:
for any sequence of Boolean functions 
$f_N(\omega) \in \{0,1\}$, we have
\begin{equation}\label{eq:BKS-original}
	\sum_{k} I_k(f_N)^2 \xrightarrow[\, N\to\infty \, ]{} 0
	\qquad \Longrightarrow \qquad
	\text{$(f_N)_{N\in\N}$ is noise sensitive} \,.
\end{equation}
This result was originally proved in the symmetric case \( p = \frac{1}{2} \) in \cite{BKS},
but remarked to hold  for any  \( p \in (0, 1) \); a proof was given in \cite{ABGR14}.

Intuitively, the sum of squared influences measures how much the function \(f_N\) depends on 
individual variables~$\omega_k$. 
If this sum vanishes as \(N \to \infty\), it indicates that no single variable has 
a significant impact on the output, which must depend jointly on many variables,
making the function highly sensitive to noise.

A quantitative refinement of the BKS criterion \eqref{eq:BKS-original} was provided
by N.~Keller and G.~Kindler \cite[Theorem~7]{KK}:
for any \( \epsilon \in (0,1) \) there is an exponent
$\gamma_\epsilon = \gamma_{\epsilon,p} > 0$ such that,
for all Boolean functions \( f(\omega) \in \{0, 1\} \), one can bound
\begin{equation}\label{eq:KK-original}
	\bbcov\big[ f(\omega^\epsilon), f(\omega) \big] \le
	20 \, \cW[f]^{\gamma_{\epsilon}} \qquad \text{with} \qquad
	\cW[f] := 4p(1-p) \, \sum_k I_k[f]^2 \,.
\end{equation}
This was improved in \cite{EG22}, where it was shown that
--- always for Boolean functions $f(\omega) \in \{0,1\}$ ---
the RHS can be multiplied by $\bbvar[f]$.

The estimate \eqref{eq:KK-original} 
shows that the covariance between \( f(\omega^\epsilon) \) 
and \( f(\omega) \) is controlled by a power of the sum of squared influences \( \cW[f] \)
and, clearly, it implies the BKS criterion~\eqref{eq:BKS-original}.
The exponent $\gamma_\epsilon$ is asymptotically linear as $\epsilon \downarrow 0$,
namely $\gamma_{\epsilon} \sim \alpha \, \epsilon$
for a suitable $\alpha = \alpha(p) > 0$
(the asymptotic notation $\sim$ means that the ratio of the two sides converges to~$1$).

\smallskip

It was already remarked in \cite{KK,ABGR14} that the BKS criterion
\eqref{eq:BKS-original} holds beyond Boolean functions.
For instance, the proof of the estimate \eqref{eq:KK-original}
in \cite{KK} goes through for any function $f(\omega) \in [0,1]$,
provided one extends the definition \eqref{eq:GS-influence} of influence 
as follows:
\begin{equation} \label{eq:inf-quasi-gen}
	I_k[f] = \bbE[|f(\omega_+^k) - f(\omega_-^k)|] \,.
\end{equation}

More recently, 
building on ideas from \cite{R06,FS07},
it was observed by R.\ van Handel in a series of lectures
(reported in \cite{Ros20}) that a variation of \eqref{eq:KK-original}  holds,
in the symmetric case $p=\frac{1}{2}$, 
\emph{for general functions $f(\omega) \in L^2$}:
for any $\epsilon \in (0,1)$,
setting $\theta_\epsilon = \tfrac{\epsilon}{2-\epsilon}$, one has
\begin{equation}\label{eq:vH}
	\bbcov\big[ f(\omega^\epsilon), f(\omega) \big] \le
	\bbvar[f]^{1-\theta_{\epsilon}} \, \cW[f]^{\theta_{\epsilon}} 
\end{equation}
(the exponent $\theta_\epsilon$ improves on $\gamma_\epsilon$ from
\eqref{eq:KK-original}, 
since $\gamma_\epsilon \sim 0.234 \, \epsilon$ as $\epsilon \downarrow 0$,
while $\theta_\epsilon > 0.5\, \epsilon$).

The bound \eqref{eq:vH} can be extended to general binary variables $\omega_i$ with $p \in (0,1)$,
for a suitable $\theta_{\epsilon,p} > 0$, as shown in \cite{IZ}.
See also the forthcoming work \cite{AHT}, which studies noise sensitivity 
with applications to Last Passage Percolation.

\medskip

All the above results are formulated for binary variables $\omega_i$.
Our goal is to go \emph{beyond the 
binary setting}, extending the BKS criterion \eqref{eq:BKS-original} to general
random variables $\omega_i$'s and to a large class of functions $f(\omega) \in L^2$.
We establish a quantitative bound like \eqref{eq:vH},
with a suitable extension of the ``$L^1$ notion'' \eqref{eq:inf-quasi-gen} of influence.
We also investigate an enhanced notion of noise sensitivity, which yields asymptotic independence
beyond Boolean functions.

\smallskip

In the next Subsection~\ref{sec:set} we formulate our setting and assumptions.
In the following Subsection~\ref{sec:infl} we extend the notion of influence.
Subsections~\ref{sec:general}--\ref{sec:optimal} are devoted to the
presentation of our main results, followed by some concluding remarks
in Subsection~\ref{sec:concluding}.

\begin{remark}
Even though we focus on the setting of a finite or countable family of random variables 
$(\omega_i)_{i\in\bbT}$, we point out that noise sensitivity can also be studied in a continuum setting.
We refer to \cite{LPY23,BPY24} for criteria ensuring noise sensitivity for 
Poisson point processes, with applications
to problems involving continuum percolation.
\end{remark}

\subsection{Our setting}
\label{sec:set}

We consider independent random variables $\omega = (\omega_i)_{i \in \mathbb{T}}$,
labeled by a finite or countable set~$\mathbb{T}$,
defined on a probability space\footnote{We could work on the canonical space $\Omega = \bigtimes_{i\in \bbT} E_i$,
taking $\omega_i$ as the coordinate projections, but it is sometimes convenient to 
allow for extra randomness, e.g.\ to define $\omega^\epsilon$.} $(\Omega, \mathcal{A}, \mathbb{P})$
and taking values in measurable spaces $(E_i, \cE_i)$:
\begin{equation}\label{eq:Ei}
	\omega_i \colon \Omega \to E_i \qquad \text{with law} \qquad
	\mu_i(\cdot) = \bbP(\omega_i \in \cdot) \,.
\end{equation}
In many examples the $\omega_i$'s take values in the same space $(E_i, \cE_i) = (E,\cE)$
and they are i.i.d.\ (independent and identically distributed), but we do not require it.

We are interested in functions $f(\omega) \in L^2$
(where $f: \bigtimes_{i\in\bbT} E_i \to \R$ is measurable). 
We anticipate that, \emph{when the $\omega_i$'s are i.i.d.\
and take finitely many values, we do not need to impose any further constraint on~$f(\omega)$}.
For general $\omega_i$'s, we need to require a suitable hypecontractivity condition,
that we now describe and illustrate by examples.

\smallskip

Given an index $i \in \bbT$, we can look at \emph{$f(\omega)$ as a function of~$\omega_i$},
denoted by $\omega_i \mapsto f(\omega)$, keeping all other variables $(\omega_j)_{j\ne i}$ fixed.
We will single out, for each $i\in \bbT$, a vector space $\cV_i$ of functions of~$\omega_i$
which a.s.\ contains $\omega_i \mapsto f(\omega)$, that is
\begin{equation}\label{eq:belongs}
	\forall i \in \bbT\,, \ \text{ for a.e.\ $(\omega_j)_{j\ne i}\,$}\colon \qquad
	\omega_i \mapsto f(\omega) \
	\text{ belongs to } \ \cV_i \subseteq L^2(E_i, \mu_i)\,.
\end{equation}
Of course, the simplest choice would be to just take $\cV_i \equiv L^2(E_i, \mu_i)$, so that
\eqref{eq:belongs} holds for any $f(\omega) \in L^2$ (by Fubini's theorem). The reason for 
choosing a smaller $\cV_i \subsetneq L^2(E_i, \mu_i)$ is to ensure 
a \emph{hypercontractivity bound}: we require that
centred functions $g\in\cV_i$ have $L^q$ norm controlled by the $L^2$ norm,
for some $q > 2$. This is our main assumption.

\begin{assumption}
\label{hyp:gen2}
The independent random variables $(\omega_i)_{i\in\bbT}$ and the function
$f(\omega) \in L^2$ satisfy the following conditions.
\begin{aenumerate}
\item\label{it:a} For any $i\in\bbT$, there is a closed and separable
vector space $\cV_i \subseteq L^2(E_i, \mu_i)$
containing the constants, i.e.\ $1 \in \cV_i$, such that
$\omega_i \mapsto f(\omega) \in \cV_i$ a.s.\ (that is, \eqref{eq:belongs} holds).

\item\label{it:b} There exist an exponent $q \in (2,\infty)$ and a constant $M_q < \infty$ such that,
for any $i\in\bbT$,
\begin{equation} \label{eq:Mq}
	\|g(\omega_i) \|_q \le M_q \, \|g(\omega_i) \|_2
	\qquad 
	\forall g \in \cV_i 
	\quad \text{with} \quad \bbE[g(\omega_i)] = 0 \,,
\end{equation}
where $\|\cdot\|_q = \mathbb{E} [| \cdot |^{q}]^{1/q}$ denotes the usual $L^q$ norm.
\end{aenumerate}
\end{assumption}

This assumption is quite general and covers a variety of settings.
We present here two relevant examples,
deferring a more detailed discussion to Appendix~\ref{app:hyper},
where we connect Assumption~\ref{hyp:gen2} to the notion of
\emph{ensembles} from \cite[Definition~3.1]{MOO}.
We point out that a similar assumption also appears in \cite[Proposition~2.1]{BR}.

\smallskip

The first example concerns binary or, more generally, finite valued random variables $\omega_i$.
In this setting, we allow for
\emph{arbitrary functions $f(\omega) \in L^2$}.

\begin{example}[Finite support]\label{ex:fin-supp}
Let the random variables $(\omega_i)_{i\in\bbT}$ be i.i.d.\
and take \emph{finitely many values}, i.e.\ their law $\mu_i = \mu$ has \emph{finite support}
($\mu(I) = 1$ with $|I| < \infty$).

Then \emph{Assumption~\ref{hyp:gen2} is satisfied by every function $f(\omega) \in L^2$},
for any exponent $q > 2$ (just choose $\cV_i = L^2(E,\mu) = L^q(E,\mu)$)
and for a suitable $M_q < \infty$ depending only on~$\mu$.
We refer to Lemma~\ref{th:finite-gen} and Remark~\ref{rem:appl-fin} for an elementary proof.
\end{example}

The second example concerns general real valued random variables $\omega_i$'s
satisfying a moment condition.
In this setting, we can allow for any \emph{polynomial chaos} $f(\omega) \in L^2$.
This will be relevant for the application to directed polymers in Section~\ref{sec:appl}.

\begin{example}[Polynomial chaos]\label{ex:poly-chaos}
Let the $(\omega_i)_{i\in\bbT}$ be independent real random variables
with zero mean, unit variance and a uniformly bounded moment of some order $q > 2$:
\begin{equation}  \label{eq:omega}
	\mathbb{E} [\omega_i] = 0 \,, \qquad 
	\mathbb{E} [\omega_i^2] = 1 \,, \qquad
	M_q := \sup_{i\in\bbT} \| \omega_i \|_q < \infty \,.
\end{equation}

Then Assumption~\ref{hyp:gen2} is satisfied by all $f(\omega) \in L^2$
that are \emph{polynomial chaos}, i.e.\ multi-linear polynomials (or power series),
where each variable $\omega_i$ appears with power~$0$ or~$1$:
\begin{equation} \label{eq:fpoly}
	f(\omega) = \bbE[f] + \sum_{d=1}^\infty f^{(d)}(\omega) \qquad \text{with} \quad\
	f^{(d)}(\omega) = \!\!\!\!\!\! \sumtwo{\{i_1, \ldots, i_d\} \subseteq \bbT\\
	i_j \ne i_k \, \forall j \ne k}
	\!\! \hat{f}(i_1,\ldots, i_d) \, \omega_{i_1} \cdots\, \omega_{i_d} 
\end{equation}
(for coefficients $\hat{f}(\cdot)$ such that $\sum_{\{i_1, \ldots, i_d\}} 
\hat{f}(i_1,\ldots, i_d)^2 < \infty$, so the sum converges in~$L^2$).

Since $\omega_i \mapsto f(\omega)$ is a \emph{linear} map,
we see that condition \eqref{eq:belongs} holds with the choice
$\cV_i = \big\{ g(x) = \alpha + \beta \, x \colon \ \alpha, \beta \in \R  \big\}$
and the bound \eqref{eq:Mq} holds by assumption \eqref{eq:omega}.
\end{example}

\begin{remark}[Beyond polynomial chaos] \label{rem:high}
Assumption~\ref{hyp:gen2} is also satisfied by 
any $f(\omega) \in L^2$ which,
as a function of each~$\omega_i$ for fixed $(\omega_j)_{j\ne i}$, 
is a polynomial of
\emph{uniformly bounded degree}, provided the $\omega_i$'s have
enough finite moments; see Remark~\ref{rem:appl-fin2} for details.
\end{remark}

\begin{remark}[Binary setting]\label{rem:binarychaos}
For binary $\omega_i$'s, see \eqref{eq:binary0}, every function $f(\omega) \in L^2$
is a polynomial chaos,
by the Fourier-Walsh expansion (see Lemma~\ref{lem:completeness}). Thus
Examples~\ref{ex:fin-supp} and~\ref{ex:poly-chaos} both extend the classical setting
of binary $\omega_i$'s, allowing for arbitrary $f(\omega) \in L^2$.
\end{remark}

\subsection{General influences}
\label{sec:infl}

We extend the classical notion of influence \eqref{eq:GS-influence} to 
general functions $f(\omega)$ of general random variables $(\omega_i)_{i\in\bbT}$.

For $k\in\bbT$, we introduce the ``probabilistic
 gradient'' of $f$ with respect to~$\omega_k$:\footnote{This
extends a definition of Talagrand~\cite{Tal} for binary $\omega_i$'s
(see Remark~\ref{rem:Tal}). The same notion, denoted $\Delta_k f$,
also appears in \cite{BR}.}
\begin{equation}\label{eq:deltak}
	\delta_k f := f - \bbE_k[f]  \qquad
	\text{where we set} \quad
	\bbE_k[\cdot] := \bbE[\cdot | \sigma((\omega_j)_{j \ne k})] \,.
\end{equation}
We then define the \emph{``$L^1$ influence''
of $\omega_k$ on~$f$} as the first absolute moment of $\delta_k f$:
\begin{equation} \label{eq:inf1}
	\Inf_k^{(1)}[f] := \bbE \big[ | \delta_k f | \big] 
\end{equation}
which is a quantification of ``how much $f$ depends on~$\omega_k$''.
We finally extend the definition of $\cW[f]$, see \eqref{eq:KK-original}, as the sum of squared
$L^1$ influences:
\begin{equation}\label{eq:W}
	\mathcal{W} [f] := \sum_{k} \Inf_k^{(1)} [f]^2 \,.
\end{equation}

\begin{remark}[Binary vs.\ general influences]
Definition \eqref{eq:inf1} generalises the classical influence \eqref{eq:GS-influence}
(see Lemma~\ref{lem:binary1}):
for binary variables $\omega_i$'s, see \eqref{eq:binary0}, one has the following correspondence
 for Boolean functions $f(\omega) \in \{0,1\}$
\begin{equation*}
	\Inf_k^{(1)}[f] = 2p(1-p) \, I_k[f] \,.
\end{equation*}
In particular, the definition \eqref{eq:W} of $\cW[f]$ extends the
classical one from \eqref{eq:KK-original}.
\end{remark}

\begin{remark}[$L^1$ vs.\ $L^2$ influences]
A related notion of ``$L^2$ influence'' is
\begin{equation} \label{eq:inf2}
	\Inf_k^{(2)}[f] := \bbE \big[ (\delta_k f)^2 \big] \,.
\end{equation}
For Boolean functions $f(\omega) \in \{0,1\}$
one has $\Inf_k^{(1)}[f] = 2 \, \Inf_k^{(2)}[f]$ (see Lemma~\ref{lem:boole}),
hence $L^1$ and $L^2$ influences almost coincide,
but for general functions they may differ significantly.

While the $L^2$ influence is relevant in several contexts,
see e.g.\ \cite{KKL,Tal,MOO},
it is the $L^1$ influence which plays a key role for our goals.
This was already understood in \cite{KK,ABGR14,AHT},
where the influence $I_k[f]$ for non-Boolean functions $f$ is defined as an $L^1$ norm,
see \eqref{eq:inf-quasi-gen}. 
However, definition \eqref{eq:inf-quasi-gen} still
requires binary variables $\omega_i$ and, moreover, is based on a discrete gradient which 
treats the values $x_\pm$ equally, even for $p \ne \frac{1}{2}$. This is why a ``correction factor''
$p\,(1-p)$ appears in the definition \eqref{eq:KK-original} of $\cW[f]$.

Our general definition of influence \eqref{eq:inf1} 
appears to be more suitable, since it is based on the
probabilistic gradient $\delta_k f$ which takes into account the distribution of~$\omega_k$.
\end{remark}

\subsection{General criteria for noise sensitivity}
\label{sec:general}

We are ready to state our first main results, extending
the Keller-Kindler bound \eqref{eq:KK-original} to our
general  setting, in the form \eqref{eq:vH}.

We recall that $\omega^\epsilon = (\omega^\epsilon_i)_{i\in\bbT}$ is the
configuration obtained from $\omega = (\omega_i)_{i\in\bbT}$
resampling each $\omega_i$ independently with probability~$\epsilon$,
see Definition~\ref{def:rand}.

\begin{theorem}[General noise sensitivity] \label{th:ns-gen}
Let $(\omega_i)_{i\in\bbT}$ be independent random variables
and let $f(\omega) \in L^2$ be a function satisfying Assumption~\ref{hyp:gen2}
for some $q > 2$ and $M_q < \infty$
(see e.g. Examples~\ref{ex:fin-supp} and~\ref{ex:poly-chaos}).

For any $\epsilon \in (0,1)$, there is an exponent $\gamma_{\epsilon,q} > 0$ such that
\begin{equation}
    \label{eq:boundepsilon+}
    \frac{\bbcov\bigl[ f(\omega^\epsilon), f(\omega) \bigr]}{\bbvar [f]} \le 
    4\, \bigg(  \frac{\mathcal{W} [f]}{\bbvar [f]} \bigg)^{\gamma_{\epsilon,q}} \,.
\end{equation}
The exponent $\gamma_{\epsilon,q}$, see \eqref{eq:gammaC}, depends on the
\emph{hypercontractivity constant}~$\eta_q$ defined in Lemma~\ref{th:hyp}, and it
satisfies $\gamma_{\epsilon,q} \sim \alpha_q \, \epsilon$ 
as $\epsilon \downarrow 0$
for an explicit $\alpha_q \in (0,\frac{1}{2})$, see \eqref{eq:alphaq}.
\end{theorem}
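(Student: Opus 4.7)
I would follow the classical Bourgain--Kahn--Kalai--Linial route, as refined by Keller--Kindler and van Handel, adapting it to the non-binary hypercontractive setting enabled by Assumption~\ref{hyp:gen2}. The starting point is the Hoeffding (tensor / ``polynomial chaos'') decomposition
\[
f \;=\; \bbE[f] + \sum_{d\ge 1} f_d,\qquad f_d \;=\; \sum_{|S|=d} f_S,
\]
where $f_S$ is measurable w.r.t.\ $(\omega_i)_{i\in S}$ and fully centred ($\bbE[f_S\mid\omega_{j},\,j\ne i]=0$ for every $i\in S$). Assumption~\ref{hyp:gen2}(a) guarantees that this decomposition is well-defined and orthogonal in $L^{2}$, while the noise operator $T_{\rho}$ acts diagonally, $T_{\rho}f_d = \rho^{d}f_d$. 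Hence, with $\rho=1-\epsilon$,
\[
\bbcov\bigl[f(\omega),f(\omega^{\epsilon})\bigr] \;=\; \sum_{d\ge 1}(1-\epsilon)^{d}\,\|f_d\|_{2}^{2}.
\]

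\textbf{Low- vs.\ high-degree split.} Fix a cutoff $D\in\N$. The tail $\sum_{d>D}(1-\epsilon)^{d}\|f_d\|_{2}^{2}$ is trivially bounded by $(1-\epsilon)^{D}\bbvar[f]$. The core task is to control the low-degree mass $\sum_{d\le D}\|f_d\|_{2}^{2}$ in terms of $\cW[f]$ and $\bbvar[f]$. Using the identity $\sum_{k}\|(\delta_{k}f)_{d}\|_{2}^{2}=d\,\|f_d\|_{2}^{2}$ one gets
\[
\sum_{d\le D}\|f_d\|_{2}^{2} \;\le\; \sum_{k}\|P_{\le D}\,\delta_{k}f\|_{2}^{2},
\]
where $P_{\le D}$ projects on degrees at most $D$. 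The hypercontractivity of Lemma~\ref{th:hyp}, combined with the fact that $P_{\le D}$ commutes with the noise operator, yields $\|P_{\le D}h\|_{2}\le \eta_{q}^{D}\,\|h\|_{q'}$ for $q'=q/(q-1)\in(1,2)$, applied to $h=\delta_{k}f$. Interpolating the $L^{q'}$-norm between $L^{1}$ (which is exactly $\Inf_{k}^{(1)}[f]$) and $L^{2}$, then applying H\"older in the sum over $k$, produces
\[
\sum_{d\le D}\|f_d\|_{2}^{2} \;\le\; C_{q}\,\eta_{q}^{2D}\,\cW[f]^{a_{q}}\,\bbvar[f]^{1-a_{q}}
\]
for an explicit exponent $a_{q}\in(0,1)$ (essentially $a_{q}=(q-2)/q$).

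\textbf{Optimisation and main obstacle.} Combining the two bounds gives
\[
\bbcov\bigl[f(\omega),f(\omega^{\epsilon})\bigr] \;\le\; C_{q}\,\eta_{q}^{2D}\,\cW[f]^{a_{q}}\,\bbvar[f]^{1-a_{q}} \;+\; (1-\epsilon)^{D}\,\bbvar[f].
\]
Choosing $D$ proportional to $\log\bigl(\bbvar[f]/\cW[f]\bigr)$, with a slope tuned to balance the two terms, produces the advertised bound $4\,\bbvar[f]\bigl(\cW[f]/\bbvar[f]\bigr)^{\gamma_{\epsilon,q}}$; the small-$\epsilon$ asymptotics $\gamma_{\epsilon,q}\sim \alpha_{q}\,\epsilon$ corresponds to the optimal slope $D\sim c/\epsilon$, producing an explicit $\alpha_{q}\in(0,\tfrac{1}{2})$. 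The main obstacle, in my view, is making the $L^{1}$--$L^{2}$ interpolation genuinely compatible with the $L^{q'}$ output of the tensor hypercontractivity: since $\sum_{k}\|\delta_{k}f\|_{2}^{2}=\sum_{d}d\,\|f_d\|_{2}^{2}$ is \emph{not} a priori controlled by $\bbvar[f]$, the truncation $P_{\le D}$ has to be carried through the interpolation step, and the resulting loss is precisely what fixes the quantitative slope of $\gamma_{\epsilon,q}$. A more structural but easier subtlety is the construction of the Hoeffding decomposition and of the tensor-product hypercontractivity constant $\eta_{q}$ under Assumption~\ref{hyp:gen2}(a)--(b) alone (as opposed to polynomial chaos or Boolean functions), which is handled via the ensemble framework of Appendix~\ref{app:hyper}.
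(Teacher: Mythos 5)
Your overall strategy (chaos decomposition, diagonal action of the noise operator, low/high degree split, hypercontractivity plus $L^1$--$L^2$ interpolation, then optimisation of the cutoff) is the right one, and your final optimisation step matches the paper's. But the proposal has a genuine gap at exactly the point you flag as ``the main obstacle'', and you do not resolve it. If you bound the low-degree mass via $\sum_{d\le D}\|f_d\|_2^2 \le \sum_k \|P_{\le D}\,\delta_k f\|_2^2$ and then run Hölder/hypercontractivity/interpolation on each $\delta_k f$, the summation over $k$ produces the factor $\bigl(\sum_k \|\delta_k f\|_2^2\bigr)^{2/q} = \bigl(\sum_{d\ge 1} d\,\|f_d\|_2^2\bigr)^{2/q}$, which is not controlled by $\bbvar[f]$ and may even be infinite under Assumption~\ref{hyp:gen2}. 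The alternative of ``carrying $P_{\le D}$ through the interpolation'' does not repair this: the projection $P_{\le D}$ is not bounded on $L^1$, so $\|P_{\le D}\,\delta_k f\|_1$ cannot be replaced by $\Inf_k^{(1)}[f]$ without invoking Bourgain-type results with $D$-dependent constants, and even the crude bound $\sum_k\|P_{\le D}\delta_k f\|_2^2 = \sum_{d\le D} d\,\|f_d\|_2^2 \le D\,\bbvar[f]$ enters the wrong factor of the interpolation. In the binary Boolean setting this loop is closed by special identities (or, in \cite{KK}, by exponential large-deviation bounds), neither of which is available here.

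The paper closes the gap with a different device that your proposal is missing. One fixes a total order on $\bbT$ and groups the chaos components by their \emph{maximal} index, setting
\begin{equation*}
  g_k \;:=\; \sum_{I \subseteq \bbT \colon \max(I)=k} f_I \;=\; \bbE\bigl[\,\delta_k f \,\big|\, \cF_{\le k}\,\bigr].
\end{equation*}
Since each $I$ has a unique maximum, $\sum_k \|g_k\|_2^2 = \sum_{I}\|f_I\|_2^2 = \bbvar[f]$ with no factor of $d$, while Jensen's inequality gives $\|g_k\|_1 \le \bbE[|\delta_k f|] = \Inf_k^{(1)}[f]$. Writing $\bbcov[f(\omega^\epsilon),f(\omega)] \le \sum_k \|T^{\eta_q} g_k\|_2^2$ for $\epsilon \ge 1-\eta_q^2$ and applying Hölder, interpolation and hypercontractivity to each $g_k$ then yields the key bound $\bbcov[f(\omega^\epsilon),f(\omega)] \le \cW[f]^{1-2/q}\,\bbvar[f]^{2/q}$ directly, with no degree truncation at that stage; the truncation at $\bar d$ and the optimisation $\bar d \sim \log(\bbvar[f]/\cW[f])$ are performed only afterwards, on $\|f^{(\le d)}\|_2^2$, to extend the estimate to small $\epsilon$ and produce $\gamma_{\epsilon,q}$. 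Without this (or an equivalent) mechanism your interpolation step does not close. (Minor point: your displayed factor $\eta_q^{2D}$ should read $\eta_q^{-2D}$, as is clear from the subsequent optimisation.)
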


We can give a uniform lower bound on the exponent $\gamma_{\epsilon, q} \ge
\bar{\gamma}_{\epsilon, q, M_q} >0$ depending on the constant $M_q$ from 
Assumption~\ref{hyp:gen2}  (see \eqref{eq:lb-exponent} below).
As a consequence, we can apply the estimate \eqref{eq:boundepsilon+} to a sequence of functions 
$(f_N(\omega))_{N\in\N}$, which yields the announced
generalisation of the BKS criterion for noise sensitivity.

\begin{theorem}[General BKS criterion] \label{th:BKS-gen}
Let $(\omega_i)_{i\in\bbT}$ be independent random variables
and let $(f_N(\omega))_{N\in\N}$ be functions
satisfying Assumption~\ref{hyp:gen2} for the same $q > 2$ and $M_q < \infty$
(see e.g. Examples~\ref{ex:fin-supp} and~\ref{ex:poly-chaos}).
Also assume that $\limsup_{N\to\infty} \bbvar[f_N] < \infty$.

Then, recalling \eqref{eq:inf1}-\eqref{eq:W}, the BKS criterion for noise sensitivity applies:
\begin{equation}\label{eq:BKS-gen}
	\lim_{N\to\infty} \cW[f_N] = 0 \qquad \Longrightarrow \qquad
	\forall \epsilon > 0 \colon \quad
	\lim_{N\to\infty} \bbcov\bigl[f_N(\omega^\epsilon), f_N(\omega)\bigr] = 0 \,.
\end{equation}
\end{theorem}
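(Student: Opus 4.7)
The plan is to deduce Theorem~\ref{th:BKS-gen} directly from the quantitative estimate of Theorem~\ref{th:ns-gen}, treating it as essentially a qualitative restatement. The only things to verify are the uniformity of the exponent across the family $(f_N)$ and the handling of potentially degenerate variances. First I would apply Theorem~\ref{th:ns-gen} to each $f_N$ individually; the key point is that, since by hypothesis the whole family satisfies Assumption~\ref{hyp:gen2} with the \emph{same} pair $(q,M_q)$, the remark immediately following Theorem~\ref{th:ns-gen} supplies a uniform positive lower bound $\bar\gamma = \bar\gamma_{\epsilon,q,M_q}>0$ on the exponent, so I may fix a single $\gamma \in (0,\bar\gamma]$ (and in particular $\gamma \le 1$) that works simultaneously for every $N$. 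Multiplying \eqref{eq:boundepsilon+} through by $\bbvar[f_N]$ then rewrites it in the equivalent form
\[
  \bbcov\bigl[f_N(\omega^\epsilon),\,f_N(\omega)\bigr] \;\le\; 4\, \bbvar[f_N]^{1-\gamma}\, \cW[f_N]^{\gamma} \qquad \forall N \in \N.
\]

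Next I would pass to the limit $N\to\infty$. By hypothesis $\limsup_N \bbvar[f_N] \le C<\infty$, and since $0<\gamma\le 1$ the factor $\bbvar[f_N]^{1-\gamma}$ is uniformly bounded by $\max(1,C)^{1-\gamma}$. The assumption $\cW[f_N]\to 0$ combined with $\gamma>0$ then forces $\cW[f_N]^{\gamma}\to 0$, so the right-hand side above vanishes in the limit, giving exactly the BKS implication \eqref{eq:BKS-gen}. Should one prefer not to reduce to $\gamma\le 1$, the same conclusion can be obtained by a subsequence argument: if $\bbvar[f_N]$ remains bounded below along a subsequence, one applies \eqref{eq:boundepsilon+} to that subsequence to get $\bbcov/\bbvar \to 0$ and hence $\bbcov\to 0$; otherwise one invokes the trivial Cauchy--Schwarz bound $|\bbcov[f_N(\omega^\epsilon),f_N(\omega)]|\le \bbvar[f_N]$, which holds because $\omega^\epsilon$ and $\omega$ share the same law.

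There is no genuine obstacle here: all the substantive work has already been absorbed into Theorem~\ref{th:ns-gen}, so the present statement is essentially a qualitative repackaging. The only point that requires care is the uniformity of the lower bound $\bar\gamma$ on the exponent across the entire family $(f_N)$, which is precisely the content of the paragraph inserted between Theorems~\ref{th:ns-gen} and \ref{th:BKS-gen} and ultimately rests on the standing hypothesis that the constant $M_q$ in Assumption~\ref{hyp:gen2}(b) can be chosen common to all $N$.
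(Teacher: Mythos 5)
Your proposal is correct and follows essentially the same route as the paper: the whole content of the proof is the uniform lower bound $\gamma_{\epsilon,q}\ge\bar\gamma_{\epsilon,q,M_q}>0$ coming from \eqref{eq:boundeta}, after which one applies \eqref{eq:boundepsilon+} with this fixed exponent and lets $N\to\infty$. Your extra care with possibly degenerate variances (rewriting the bound as $\bbcov\le 4\,\bbvar[f_N]^{1-\gamma}\cW[f_N]^{\gamma}$, or falling back on Cauchy--Schwarz) is a small point the paper leaves implicit, and it is handled correctly.
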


The proof of Theorems~\ref{th:ns-gen}-\ref{th:BKS-gen} is presented in Section~\ref{sec:BKS}.
We exploit a \emph{chaos decomposition}
known as Efron-Stein or Hoeffding decomposition, see Proposition~\ref{th:genchaos},
which generalises the notion
\eqref{eq:fpoly} of polynomial chaos: we can write  any $f(\omega) \in L^2$ as
\begin{equation*}
	f(\omega) = \bbE[f] + \sum_{d=1}^\infty f^{(d)}(\omega) \qquad \text{with} \qquad
	f^{(d)}(\omega) = \sum_{I \subseteq \bbT \colon |I| = d} f_I(\omega) \,,
\end{equation*}
where the functions $f_I(\cdot)$ depend on the variables $(\omega_i)_{i \in I}$ and they are
orthogonal to each other.
By direct computation, see \eqref{eq:varianza} and \eqref{eq:cov-op}, we can write
\begin{equation} \label{eq:cov-op0}
	\bbvar[f] = \sum_{d=1}^\infty \|f^{(d)}\|_2^2 \,, \qquad
	\bbcov\bigl[ f(\omega^\epsilon), f(\omega) \bigr] = \sum_{d=1}^\infty
	(1-\epsilon)^d \, \|f^{(d)}\|_2^2 \,.
\end{equation}
The proof of the bound \eqref{eq:boundepsilon+} is then reduced to the
estimate of $\|f^{(d)}\|_2^2$ for $d \in \N$, which we call
the \emph{variance spectrum} of~$f$.

\begin{remark}[Noise sensitivity and variance spectrum]\label{rem:drift}
For functions $(f_N(\omega))_{N\in\N}$ with non-degenerate
variances, say $a \le \bbvar[f_N] \le b$ for some $0 < a < b < \infty$,
it follows by \eqref{eq:cov-op0} that \emph{noise
sensitivity} is equivalent to the \emph{variance spectrum drifting to infinity}:
\begin{equation}\label{eq:ns-spectrum}
	\forall \epsilon > 0 \colon \ \,
	\lim_{N\to\infty} \bbcov\bigl[f_N(\omega^\epsilon), f_N(\omega)\bigr] = 0
	\qquad \Longleftrightarrow \qquad
	\forall d \in \N \colon \ \,
	\lim_{N\to\infty} \|f_N^{(d)}\|_2^2 = 0 \,.
\end{equation}
\end{remark}

Proving Theorem~\ref{th:BKS-gen} is then reduced to
estimating $\|f^{(d)}\|_2^2$. For binary $\omega_i$'s,
this was obtained in \cite{KK} exploiting
\emph{exponential large deviations bounds}, 
which are not available in our setting, since Assumption~\ref{hyp:gen2}
only asks for a finite moment bound, see \eqref{eq:Mq}.
However, \emph{moment bounds turn out to be just as good
for our goals}: we show in  Theorem~\ref{th:main1} that
\begin{equation} \label{eq:boundd0}
 \frac{ \sum_{\ell = 1}^d \| f^{(\ell)} \|_2^2 }{\bbvar [f]} \le 
 \frac{1}{\eta_q^{2d}}   \left( \frac{\mathcal{W}
    [f]}{\bbvar [f]} \right)^{1 - \frac{2}{q}} 
  \end{equation}
where $\eta_q \in (0,1)$ is the \emph{hypercontractivity constant}
defined in the following result,
which follows by \cite[Proposition~3.16]{MOO}
and by the bound \eqref{eq:Mq} from Assumption~\ref{hyp:gen2}.

\begin{lemma}[Hypercontractivity constant]\label{th:hyp}
If Assumption~\ref{hyp:gen2} holds with $q > 2$ and $M_q < \infty$,
see \eqref{eq:Mq}, there is a constant $\eta_q \in (0,1)$ such that 
\begin{equation} \label{eq:hyper-gen}
\begin{split}
	\forall a,b \in \R  \colon \qquad \| a + \eta_q \, b \, X \|_q 
	\le \| a + b \, X \|_2 \qquad
	\begin{gathered}
	\text{for any } \,  X = g(\omega_i)  \, \text{ with} \\
	\bbE[X] = 0 \,, \ g \in \cV_i \,, \ i \in \bbT .
	\end{gathered}
\end{split}
\end{equation}
We fix $\eta_q \in (0,1)$ as the largest constant for which \eqref{eq:hyper-gen} holds, which satisfies
  \begin{equation} \label{eq:boundeta}
    \frac{1}{2 M_q \, \sqrt{q - 1}}
    \le \eta_q \le \frac{1}{\sqrt{q - 1}}  \,.
  \end{equation}
\end{lemma}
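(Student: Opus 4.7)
The plan is to deduce the lemma from a one-dimensional application of \cite[Proposition~3.16]{MOO}, combined with the moment bound~\eqref{eq:Mq}. First I would fix any $i \in \bbT$ and any centered $g \in \cV_i$ (with $\bbE[g(\omega_i)] = 0$), and set $X := g(\omega_i)$. By Assumption~\ref{hyp:gen2}\ref{it:b} we have $\|X\|_q \le M_q \|X\|_2$. The pair $\{1, X\}$ spans a two-dimensional subspace of $L^2$ containing the constants, on which the linear map $a + b X \mapsto a + \eta b X$ is well-defined; verifying that this subspace constitutes an ensemble in the sense of \cite[Definition~3.1]{MOO} is exactly the translation carried out in Appendix~\ref{app:hyper}.

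Next I would invoke \cite[Proposition~3.16]{MOO}, which applied to this one-variable ensemble yields a constant $\eta(X) \ge \frac{1}{2 M_q \sqrt{q - 1}}$ such that
\begin{equation*}
	\| a + \eta(X) \, b \, X \|_q \le \| a + b X \|_2 \qquad \forall a, b \in \R.
\end{equation*}
Since $M_q$ is a uniform upper bound on the ratio $\|X\|_q / \|X\|_2$ over all admissible $X$ (ranging over $i \in \bbT$ and centered $g \in \cV_i$), the quantity $\frac{1}{2 M_q \sqrt{q - 1}}$ is uniform as well. Defining $\eta_q$ as the largest constant for which \eqref{eq:hyper-gen} holds uniformly in such $X$, this gives the lower bound in \eqref{eq:boundeta}.

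For the upper bound, I would pick any nontrivial centered $X = g(\omega_i)$ and normalize so that $\bbE[X^2] = 1$ (assuming at least one $\cV_i$ contains a non-constant function; otherwise every admissible $X$ is zero and the lemma is vacuous). Testing \eqref{eq:hyper-gen} on $a = 1$ and $b = t \to 0^+$ and Taylor-expanding (using $\bbE[X] = 0$ together with the moment bound to justify dominated convergence on the set $\{1 + \eta t X > 0\}$ and to control the tail contribution), one finds
\begin{equation*}
	\|1 + \eta t X\|_q^2 = 1 + (q-1) \eta^2 t^2 + o(t^2), \qquad \|1 + t X\|_2^2 = 1 + t^2.
\end{equation*}
The hypercontractive inequality at leading order therefore forces $(q - 1) \eta^2 \le 1$, which yields $\eta_q \le 1/\sqrt{q - 1}$; in particular $\eta_q < 1$ since $q > 2$.

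The only nontrivial step is the first one: checking that Assumption~\ref{hyp:gen2} translates into the ensemble framework of \cite{MOO} in the required uniform way, which is the content of Appendix~\ref{app:hyper}. Once that is in place, \cite[Proposition~3.16]{MOO} delivers the lower bound in~\eqref{eq:boundeta} essentially verbatim, and the upper bound follows from the elementary Taylor-expansion argument above.
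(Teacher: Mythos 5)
Your proposal is correct and follows essentially the same route as the paper, which simply derives the lower bound in \eqref{eq:boundeta} by applying \cite[Proposition~3.16]{MOO} to each centred $X=g(\omega_i)$ together with the uniform moment bound \eqref{eq:Mq}; the upper bound $\eta_q\le 1/\sqrt{q-1}$ is the standard obstruction to $(2,q,\eta)$-hypercontractivity obtained exactly by your Taylor expansion of $\|1+\eta t X\|_q^2$ versus $\|1+tX\|_2^2$ as $t\to 0$. The only cosmetic remark is that for this per-variable statement the ensemble framework of Appendix~\ref{app:hyper} is not actually needed (it only enters later, in the tensorisation proving Theorem~\ref{th:hyper}).
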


Once the key bound \eqref{eq:boundd0} is established, we can apply
the second relation in \eqref{eq:cov-op0} to deduce Theorem~\ref{th:ns-gen};
see Section~\ref{sec:BKS} for the details.

\begin{remark}\label{rem:hyperc}
Random variables $X$ satisfying the bound in \eqref{eq:hyper-gen} 
are called $(2,q,\eta_q)$-hypercontractive \cite{MOO,O}.
In the case of polynomial chaos, see Example~\ref{ex:poly-chaos},
since $\cV_i$ consists of linear functions,
it is enough to check \eqref{eq:hyper-gen}
for $X = \omega_i - \bbE[\omega_i]$.
\end{remark}

\begin{remark}
Even though there is a gap between the bounds in \eqref{eq:boundeta}, 
one can show that the hypercontractivity constant $\eta_q$ satisfies
$\lim_{q \downarrow 2} \eta_q = 1$, see \cite[Theorem~B.1]{CSZ20}.
\end{remark}

\subsection{Enhanced noise sensitivity}
\label{sec:enhanced}

The classical notion of noise sensitivity, namely
\begin{equation} \label{eq:ns-classical}
	\forall \epsilon > 0 \colon \qquad
	\lim_{N\to\infty} \, \bbcov\big[ f_N(\omega^\epsilon), f_N(\omega) \big] = 0 \,,
\end{equation}
yields the asymptotic independence of $f_N(\omega^\epsilon)$ and $f_N(\omega)$
for Boolean functions $f_N \in \{0,1\}$.
However, this is no longer true for functions that take more than two values.

For this reason, denoting by $C^\infty_b$ the space of bounded and smooth functions
with all bounded derivatives,
it is natural to investigate an \emph{enhanced noise sensitivity} condition:
\begin{equation} \label{eq:ns+}
	\forall \epsilon > 0 \colon \qquad
	\lim_{N\to\infty}
	\bbcov[ \varphi(f_N(\omega^\epsilon)), \psi(f_N(\omega))] 
	= 0	\qquad \forall \varphi, \psi \in C^\infty_b 
\end{equation}
which yields the asymptotic independence of $f_N(\omega^\epsilon)$ and $f_N(\omega)$
for general functions $f_N$ (with, say, uniformly bounded variance).
Indeed, if $f_N(\omega)$ converges in distribution as $N\to\infty$
to a random variable $Y$, then \eqref{eq:ns+} implies that, for any $\epsilon > 0$,
the pair $(f_N(\omega),
f_N(\omega^\epsilon))$ converges in distribution to $(Y, Y')$ where $Y'$ is an independent copy of~$Y$.

\smallskip

We now discuss conditions for the enhanced noise sensitivity property \eqref{eq:ns+}.
By an application of Cauchy-Schwarz, see \eqref{eq:cs},
it suffices to take $\psi = \varphi$ in \eqref{eq:ns+}, which reduces to the
\emph{``classical'' noise sensitivity condition \eqref{eq:ns-classical}
applied to the function $\varphi(f_N(\omega))$}.

It is natural to try and apply the BKS criterion \eqref{eq:BKS-gen}
from Theorem~\ref{th:BKS-gen}
to  $\varphi(f_N(\omega))$. 
We note that influences are stable under compositions by Lipschitz functions,
see \eqref{eq:infinf}:
\begin{equation} \label{eq:infinf0}
	\Inf_k^{(1)}[\varphi(f)] \le 2 \, \|\varphi'\|_\infty \, \Inf_k^{(1)}[f] \,,
\end{equation}
therefore $\cW[f_N] \to 0$ implies $\cW[\varphi(f_N)] \to 0$ for all $\varphi \in C^\infty_b$.
However, to apply Theorem~\ref{th:BKS-gen} to $\varphi(f_N(\omega))$, we should first check 
that $\varphi(f_N(\omega))$
for $\varphi \in C^\infty_b$  satisfies Assumption~\ref{hyp:gen2}.
This is not obvious knowing only that $f_N(\omega)$ satisfies Assumption~\ref{hyp:gen2}.

\smallskip

To avoid technicalities, 
we focus on the case when \emph{$(\omega_i)_{i\in\bbT}$ are i.i.d.\ random
variables which take finitely many values},
see Example~\ref{ex:fin-supp}. In this setting, Assumption~\ref{hyp:gen2} holds for 
\emph{any function in~$L^2$}, hence it also applies to $\varphi(f_N(\omega))$.
As a corollary of Theorem~\ref{th:BKS-gen}, we show that the BKS criterion
yields enhanced noise sensitivity.

\begin{theorem}[Enhanced noise sensitivity]\label{th:BKS+}
Let $(\omega_i)_{i\in\bbT}$ be i.i.d.\ random variables with finitely many values.
Then, for any functions $(f_N(\omega))_{N\in\N}$ we have
\begin{equation}\label{eq:BKS+}
	\lim_{N\to\infty} \cW[f_N] = 0 \qquad \Longrightarrow \qquad
	\begin{gathered}
	\rule{0pt}{1.2em}\forall \epsilon > 0 \,, \ \forall \varphi, \psi \in C^\infty_b \colon \\
	\lim_{N\to\infty} \bbcov\bigl[\varphi(f_N(\omega^\epsilon)), 
	\psi(f_N(\omega)) \bigr] = 0 \,.
	\end{gathered}
\end{equation}

This extends, for any $k\in\N$, to vector valued functions $f_N(\omega) = (f_N^{(1)}(\omega),
\ldots f_N^{(k)}(\omega))$:
the criterion \eqref{eq:BKS+} still applies
with $\cW[f_N] := \sum_{i=1}^k \cW[f_N^{(i)}]$ (for $\varphi, \psi: \R^k \to \R$).
\end{theorem}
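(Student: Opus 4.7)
The plan is to reduce the covariance bound to the symmetric case $\psi=\varphi$ via a self-adjointness trick, and then apply the generalised BKS criterion (Theorem~\ref{th:BKS-gen}) to the composed sequence $g_N:=\varphi(f_N)$. For the reduction, define the resampling operator $T_\epsilon g(\omega):=\bbE[g(\omega^\epsilon)\mid\omega]$, which by symmetry of the joint law of $(\omega,\omega^\epsilon)$ is self-adjoint on $L^2(\bbP)$ and satisfies the semigroup identity $T_\epsilon T_\delta = T_{\epsilon+\delta-\epsilon\delta}$. Choosing $\delta:=1-\sqrt{1-\epsilon}$ so that $T_\epsilon=T_\delta\circ T_\delta$, and writing $\tilde\varphi:=\varphi(f_N)-\bbE[\varphi(f_N)]$ and similarly $\tilde\psi$, Cauchy--Schwarz gives
\[
\bigl|\bbcov\bigl(\varphi(f_N(\omega^\epsilon)),\psi(f_N(\omega))\bigr)\bigr|
=\bigl|\bbE[T_\delta\tilde\varphi\cdot T_\delta\tilde\psi]\bigr|
\le \|T_\delta\tilde\varphi\|_2\,\|T_\delta\tilde\psi\|_2,
\]
and a further use of self-adjointness yields $\|T_\delta\tilde\varphi\|_2^2 = \bbE[\tilde\varphi\cdot T_\epsilon\tilde\varphi] = \bbcov(\varphi(f_N(\omega^\epsilon)),\varphi(f_N(\omega)))$. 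Thus it suffices to prove the conclusion in the symmetric case $\psi=\varphi$.

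In the symmetric case I apply Theorem~\ref{th:BKS-gen} to $g_N:=\varphi(f_N)$, verifying three hypotheses. \emph{(i)} Uniform Assumption~\ref{hyp:gen2} for $(g_N)_{N\in\bbN}$: since the $\omega_i$ are i.i.d.\ with finite support, Example~\ref{ex:fin-supp} says that \emph{every} function in $L^2$ satisfies Assumption~\ref{hyp:gen2} with any $q>2$ and a constant $M_q$ depending only on the common law $\mu$; hence the parameters $(q,M_q)$ are uniform in $N$ and in the choice of $\varphi$. \emph{(ii)} Uniform variance: $\bbvar[g_N]\le\|\varphi\|_\infty^2<\infty$, trivially. \emph{(iii)} $\cW[g_N]\to 0$: by the Lipschitz stability of $L^1$ influences \eqref{eq:infinf0}, $\Inf_k^{(1)}[g_N]\le 2\|\varphi'\|_\infty\,\Inf_k^{(1)}[f_N]$ for every $k\in\bbT$, hence $\cW[g_N]\le 4\|\varphi'\|_\infty^2\,\cW[f_N]\to 0$. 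Theorem~\ref{th:BKS-gen} then produces $\bbcov(g_N(\omega^\epsilon),g_N(\omega))\to 0$, which combined with the reduction completes the proof in the scalar case.

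The vector-valued extension follows along the same lines. For $\varphi\in C^\infty_b(\R^k)$ with Lipschitz constant $L$, coupling $\omega_i$ with an independent copy in the representation of the probabilistic gradient and using $|\varphi(x)-\varphi(y)|\le L\sum_{j=1}^k|x_j-y_j|$ yields $\Inf_i^{(1)}[\varphi(f_N)]\le 2L\sum_{j=1}^k\Inf_i^{(1)}[f_N^{(j)}]$, hence $\cW[\varphi(f_N)]\le 4kL^2\,\cW[f_N]\to 0$, and the previous argument applies verbatim. The main technical obstacle --- and the precise reason for the finite-support hypothesis --- is step \emph{(i)}: for general $\omega_i$ the composition $\varphi(f_N)$ destroys the polynomial-chaos structure of Example~\ref{ex:poly-chaos}, and there is no obvious hypercontractive space $\cV_i$ containing $\omega_i\mapsto\varphi(f_N(\omega))$. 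Under the finite-support hypothesis this difficulty dissolves, since Assumption~\ref{hyp:gen2} is then automatic for all $L^2$ functions.
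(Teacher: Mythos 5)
Your proof is correct and follows essentially the same route as the paper: reduce to the symmetric case $\psi=\varphi$ via Cauchy--Schwarz, then apply Theorem~\ref{th:BKS-gen} to $\varphi(f_N)$ using the Lipschitz stability of $L^1$ influences and the fact that finite support makes Assumption~\ref{hyp:gen2} automatic (with $q$, $M_q$ uniform in $N$) and $\bbvar[\varphi(f_N)]\le\|\varphi\|_\infty^2$. The only cosmetic difference is that you re-derive the reduction inequality \eqref{eq:cs} from the self-adjointness and semigroup property of the noise operator, whereas the paper obtains it directly from the chaos expansion \eqref{eq:cov-op}; the two derivations are equivalent.
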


The proof of this result is given in Section~\ref{sec:BKS}.
An interesting application to directed polymers and the SHF
is presented in Section~\ref{sec:appl},
where we exploit enhanced noise sensitivity to show \emph{asymptotic independence
of $f_N$ from any bounded order chaos}, see Theorem~\ref{th:SHF}.

\subsection{Optimal BKS}
\label{sec:optimal}

Our generalisation \eqref{eq:boundepsilon+} of the Keller-Kindler bound 
contains an exponent $\gamma_{\epsilon,q} > 0$ which depends
on the hypercontractivity constant $\eta_q$ from Lemma~\ref{th:hyp}.
We now present a refined bound, obtained by optimising over~$q$,
which is especially interesting when the hypercontractivity constant $\eta_q$ is explicit.

\smallskip

We consider in particular the situation when 
\emph{$\eta_q$ attains its largest possible value $1/\sqrt{q-1}$},
see \eqref{eq:boundeta}. This includes the classical setting \eqref{eq:binary0}
of binary $\omega_i$'s
with $p=\frac{1}{2}$, see \cite{Bon}, where we allow for arbitrary $f(\omega) \in L^2$.
More generally, by  Remark~\ref{rem:hyperc},
we can consider any polynomial chaos $f(\omega) \in L^2$
if we assume that the centred random variables $X = \omega_i - \bbE[\omega_i]$
are so-called $(2, q, 1/\sqrt{q-1})$-hypercontractive for any $q>2$, that is
\begin{equation} \label{eq:hyp-optimal}
	\forall q > 2 \,, \
	\forall a,b \in \R  \colon \qquad \| a + \eta_q \, b \, X \|_q 
	\le \| a + b \, X \|_2 \qquad \text{with} \quad \eta_q = \tfrac{1}{\sqrt{q-1}} \,.
\end{equation}
Let us recall some interesting distributions which satisfy \eqref{eq:hyp-optimal}.

\begin{example}[Optimal hypercontractivity]
\label{ex:hyp-expl}
The random variable $X = \omega_i - \bbE[\omega_i]$
satisfies the optimal bound \eqref{eq:hyp-optimal}
if $\omega_i$ has either of the following distributions:
\begin{itemize}
\item a binary distribution \eqref{eq:binary0} with $p=\frac{1}{2}$
\cite{Bon};
\item a Gaussian distribution $N(m,\sigma^2)$ for some $m \in \R$, $\sigma^2 > 0$ \cite{Nel};
\item a uniform distribution on some interval $(a,b) \subseteq \R$ \cite[Theorem~3.13]{MOO}.
\end{itemize}
\end{example}

The following refined bound is proved in Section~\ref{sec:BKS}.

\begin{theorem}[Refined bound]\label{th:BKS-optimal}
Let the independent random variables $(\omega_i)_{i\in\bbT}$
and the function $f(\omega) \in L^2$ satisfy Assumption~\ref{hyp:gen2}
for some $q = \bar{q}$.
Then we can bound
\begin{equation} \label{eq:SeV-opt}
	\forall \epsilon \in (0,1) \colon \qquad
	\frac{\bbcov\bigl[ f(\omega^\epsilon), f(\omega) \bigr]}{\bbvar [f]} \le
    	\bigg( \frac{\mathcal{W} [f]}{\bbvar [f]} \bigg)^{1-\frac{2}{q(\epsilon)}}  
\end{equation}
where $q(\epsilon) > 2$ is defined by
(recall $\eta_q \in (0,1)$ from Lemma~\ref{th:hyp})
\begin{equation} \label{eq:qepsilon}
	q(\epsilon) := \sup\bigl\{ q  \in (2, \bar{q}] \colon \ \eta_q^2 \ge 1-\epsilon \bigr\} \,.
\end{equation}

In the special case when $\eta_q = 1/\sqrt{q-1}$ for any $q > 2$,
we obtain the bound
\begin{equation} \label{eq:SeV}
	\forall \epsilon \in (0,1) \colon \qquad
	\frac{\bbcov\bigl[ f(\omega^\epsilon), f(\omega) \bigr]}{\bbvar [f]} \le
   	\bigg( \frac{\mathcal{W} [f]}{\bbvar [f]} \bigg)^{\frac{\varepsilon}{2-\varepsilon}}  
\end{equation}
which applies, in particular, to any function $f(\omega) \in L^2$ of
independent binary random variables  $(\omega_i)_{i\in\bbT}$ with $p=\frac{1}{2}$,
see \eqref{eq:binary0}.
More generally, \eqref{eq:SeV} holds for any polynomial chaos $f(\omega) \in L^2$
when $X = \omega_i - \bbE[\omega_i]$ satisfies \eqref{eq:hyp-optimal}
(see Example~\ref{ex:hyp-expl}). 
\end{theorem}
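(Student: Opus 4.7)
The plan is to combine the chaos (spectral) representation
\begin{equation*}
\bbcov\bigl[f(\omega^\epsilon), f(\omega)\bigr] = \sum_{d \ge 1}(1-\epsilon)^d \|f^{(d)}\|_2^2
\end{equation*}
from \eqref{eq:cov-op0} with the cumulative hypercontractive estimate \eqref{eq:boundd0}. Writing $V = \bbvar[f]$, $R = \cW[f]/V$, and $A_d = \sum_{\ell=1}^d \|f^{(\ell)}\|_2^2$, the latter can be rearranged into the form $\eta_q^{2d}\,A_d \le V\,R^{1-2/q}$, which, uniformly in $d$ and for every $q \in (2, \bar q]$, gives an $\eta_q$-weighted cap on the cumulative variance.

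The refinement to be proved comes from optimising the estimate \eqref{eq:boundepsilon+} from Theorem~\ref{th:ns-gen} over $q \in (2, \bar q]$. The exponent $\gamma_{\epsilon,q}$ there is maximised as $\eta_q^2 \downarrow 1-\epsilon$, i.e.\ as $q \uparrow q(\epsilon)$ from \eqref{eq:qepsilon}. At this critical $q = q(\epsilon)$, with $\eta_{q(\epsilon)}^2 = 1-\epsilon$, the rearranged inequality reads $(1-\epsilon)^d A_d \le V\,R^{1-2/q(\epsilon)}$ uniformly in $d$, providing a clean uniform cap on the noise-weighted cumulative variance. This uniform cap, together with the trivial $A_d \le V$, is the key input.

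I would then use Abel summation to rewrite $\bbcov = \epsilon \sum_{d \ge 1}(1-\epsilon)^d A_d$ and bound each summand by $V \min\{(1-\epsilon)^d, R^{1-2/q(\epsilon)}\}$. Splitting at the transition index $d^* := \lceil (1-2/q(\epsilon))\,|\log R|/|\log(1-\epsilon)|\rceil$ and evaluating the two pieces delivers the exponent $1-2/q(\epsilon)$ in \eqref{eq:SeV-opt}. The special case \eqref{eq:SeV} follows immediately: substituting $\eta_q^2 = 1/(q-1)$ into $\eta_{q(\epsilon)}^2 = 1-\epsilon$ gives $q(\epsilon) = (2-\epsilon)/(1-\epsilon)$, and hence $1-2/q(\epsilon) = \epsilon/(2-\epsilon)$.

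The main technical hurdle is that the naive splitting introduces extraneous constants: either an $O(\epsilon\,d^*)$ factor that can be of order $|\log R|$ when $R$ is very small, or, if one instead works with $q < q(\epsilon)$ to keep the geometric series summable, a prefactor of order $1/(\eta_q^2 - (1-\epsilon))$ that diverges as $q \uparrow q(\epsilon)$. Producing the constant-free bound stated in \eqref{eq:SeV-opt} will require an additional ingredient---most likely a sharper form of the hypercontractive estimate that exploits the full structure of the chaos decomposition (rather than only its cumulative form), or a simultaneous limiting argument in both $q$ and the splitting threshold that cancels the two sources of divergence exactly.
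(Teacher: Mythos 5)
Your proposal contains a genuine gap, which you yourself diagnose correctly in the last paragraph but do not close. The route through the cumulative estimate \eqref{eq:boundd0} plus Abel summation cannot yield the constant-free bound \eqref{eq:SeV-opt}: the information ``$(1-\epsilon)^d A_d \le V R^{1-2/q(\epsilon)}$ for all $d$, and $A_d \le V$'' is genuinely consistent with $\epsilon\sum_d (1-\epsilon)^d A_d$ being larger than $V R^{1-2/q(\epsilon)}$ by a factor of order $\epsilon\, d^* \asymp |\log R|$, so no choice of splitting threshold (nor any limiting argument in $q$) can remove the logarithmic loss from this input alone. This is precisely why the paper's Theorem~\ref{th:ns-gen}, which does proceed by truncating the chaos expansion and summing a geometric series, carries the prefactor $4$ and the weaker exponent $\gamma_{\epsilon,q}$.

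The missing ingredient is that the paper's key bound (Theorem~\ref{th:main1}, equation \eqref{eq:boundd}) is \emph{not} the cumulative estimate but a direct bound on the covariance itself: for $\epsilon \ge 1-\eta_q^2$ one has $\bbcov[f(\omega^\epsilon),f(\omega)] = \sum_I (1-\epsilon)^{|I|}\|f_I\|_2^2 \le \sum_I \eta_q^{2|I|}\|f_I\|_2^2 = \sum_k \|T^{\eta_q} g_k\|_2^2$, where $g_k = \sum_{\max(I)=k} f_I = \bbE[\delta_k f \mid \cF_{\le k}]$ is the block of the Efron--Stein decomposition indexed by sets with maximum $k$. The chain Hölder $\to$ interpolation ($\|g\|_p \le \|g\|_1^{1-2/q}\|g\|_2^{2/q}$) $\to$ hypercontractivity, applied to each $g_k$ separately, gives $\|T^{\eta_q}g_k\|_2^2 \le \|g_k\|_1^{2(1-2/q)}\|g_k\|_2^{4/q}$; one further Hölder over $k$ together with $\sum_k\|g_k\|_1^2 \le \cW[f]$ and $\sum_k\|g_k\|_2^2 \le \bbvar[f]$ yields \eqref{eq:boundd} with exponent $1-\frac{2}{q}$ and no prefactor, with no splitting over chaos degrees at all. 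Theorem~\ref{th:BKS-optimal} is then the one-line observation that the constraint $\epsilon \ge 1-\eta_q^2$ is exactly $\eta_q^2 \ge 1-\epsilon$, so one may take $q = q(\epsilon)$ from \eqref{eq:qepsilon}. Your computation of the special case, $q(\epsilon) = (2-\epsilon)/(1-\epsilon)$ and hence $1-2/q(\epsilon) = \epsilon/(2-\epsilon)$, is correct.
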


\begin{remark}
Beyond the special case $\eta_q = 1/\sqrt{q-1}$, we may consider 
$\eta_q \le 1 / (q-1)^{1/(2K)}$ for some $K \ge 1$. 
(This includes the case of binary random variables with
$p \in (0,1)$ for $K = 1/(4 p(1-p))$, see \cite[Lemma~6]{IZ}.)
In this setting, \eqref{eq:SeV} becomes
\begin{equation} \label{eq:SeV+}
	\forall \epsilon \in (0,1) \colon \qquad
	\frac{\bbcov\bigl[ f(\omega^\epsilon), f(\omega) \bigr]}{\bbvar [f]} \le
   	\bigg( \frac{\mathcal{W} [f]}{\bbvar [f]} \bigg)^{\frac{1-(1-\varepsilon)^K}{1+(1-\varepsilon)^K}} \,,
\end{equation}
see the proof of Theorem~\ref{th:BKS-optimal} in Section~\ref{sec:BKS}.
This bound agrees with \cite[Lemma~9]{IZ}.
\end{remark}

Note that \eqref{eq:SeV} recovers \eqref{eq:vH}, since
$\theta_\epsilon = \epsilon / (2-\epsilon)$. Remarkably,
\emph{this exponent is optimal}, i.e.\ there are functions $f(\omega)$
for which $\bbcov\bigl[ f(\omega^\epsilon), f(\omega) \bigr]$
matches the RHS of \eqref{eq:SeV} as $\cW[f] \downarrow 0$
(up to logarithmic corrections).
We show this by building an example
which generalises the ``Tribes'' function by M.~Ben-Or and N.~Linial \cite{BOL}.

\smallskip

We work in the symmetric binary case \eqref{eq:binary0} with 
$\bbP(\omega_i=1)=\bbP(\omega_i=-1)=1/2$ for $i\in\N$.
Let us fix a sequence $(a_t)_{t \in \N}$ with $a_t \in \N$ such that
\begin{equation}\label{eq:at}
	a_t = t^{\frac{1}{2}+\gamma} + O(1)
	\quad \text{for some } \gamma \in (0,\tfrac{1}{2})\,.
\end{equation}
We also require $t - a_t \in 2\N$ (for periodicity issues) 
so that $\bbP(\sum_{i=1}^t \omega_i = a_t ) > 0$.

Given $t,m \in \N$, we define a function
$f_{t,m}(\omega)=f_{t,m}(\omega_1,\dots,\omega_n)$ 
for $n = t \cdot m$ as follows.
\begin{itemize}
\item We divide the index set $\{1,\dots , n\} = \bigcup_{\ell=1}^m B_{\ell}$ in $m$ intervals (or ``tribes'') 
of length~$t$:
\begin{equation}\label{eq:B_ell}
	B_{\ell}:=\{(\ell -1)\,t +1, \dots , \ell \,t\} \, , \qquad
	\ell = 1,\ldots, m \,.
\end{equation}

\item To each interval we associate the random variable
\begin{equation}\label{eq:Y}
	Y_{\ell}(\omega):=\mathds{1}_{\big\{\sum_{i\in B_{\ell}}\omega_i = a_t \big\}} \, .
\end{equation}
Note that $(Y_{\ell}(\omega))_{\ell = 1, \ldots, m}$ are i.i.d.\ Bernoulli random variables
$\mathrm{Be}(p_t)$ with
\begin{equation} \label{eq:pt}
	p_t:= \mathbb{P}\left(\sum_{i=1}^t\omega_i = a_t \right)
	\underset{t\to\infty}{\sim} 2 \, \frac{\rme^{-\frac{t^{2\gamma}}{2}}}{\sqrt{2\pi \, t}}\,,
\end{equation}
by Gaussian estimates for the simple random walk (the factor $2$ is due to periodicity).

\item We define the \emph{Modified Tribes} function $f_{t,m}(\omega)$
to be the Boolean function equal to $1$ if $Y_{\ell}(\omega)=1$ for
at least one $\ell = 1,\ldots, m$, and $0$ otherwise:
\begin{equation}\label{eq:modified_tribes}
	f_{t,m}(\omega) :=\mathds{1}_{\{\exists \ell=1,\dots, m \colon Y_{\ell}(\omega) = 1\}}
	= 1 - \mathds{1}_{\{Y_{\ell}(\omega)=0, \, \forall \ell=1,\dots, m\}}.
\end{equation}
\end{itemize}
Since $\bbP(f_{t,m} = 1) = 1 - (1-p_t)^m$, for $m = m_t:=\lfloor 1/p_t \rfloor$
we have the convergence in distribution
\begin{equation*}
	f_t(\omega) := f_{t, m_t}(\omega) \xrightarrow[\ t\to\infty \ ]{d} \mathrm{Be}(1-\rme^{-1}) \,.
\end{equation*}

To show the optimality of the exponent $\varepsilon/(2-\varepsilon)$ in \eqref{eq:SeV},
we prove a lower bound which matches the RHS of \eqref{eq:SeV}
up to logarithmic corrections.
The proof is given in Section \ref{sec:optimality}. 

\begin{theorem}[Modified Tribes]\label{th:sharp}
For $\gamma \in (0,\frac{1}{2})$,
the \emph{Modified Tribes} function $f_t(\omega)$
satisfies the following: for any $\epsilon \in (0,1)$ there is $c_\epsilon > 0$
such that, as $t \to \infty$,
\begin{equation}\label{eq:sharp_bound}
	\frac{\bbcov \big[ f_t(\omega^{\varepsilon}), \, f_t(\omega) \big]}{\bbvar [f_t]} 
	\sim c_\epsilon \, \left( \frac{\mathcal{W}[f_t]}{\bbvar [f_t]} 
	\right)^{\frac{\varepsilon}{2-\varepsilon}}
	\left(\log \frac{\bbvar [f_t]}{\mathcal{W}[f_t]}\right)^{-\frac{1}{2\gamma \, (2-\epsilon)}} \,.
\end{equation}
In particular, the exponent $\varepsilon/(2-\varepsilon)$ in \eqref{eq:SeV} cannot be improved.
\end{theorem}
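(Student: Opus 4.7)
The plan is to compute $\bbvar[f_t]$, $\cW[f_t]$ and $\bbcov[f_t(\omega^\epsilon), f_t(\omega)]$ explicitly and then match asymptotics. Two features make this tractable: the product structure $f_t = 1 - \prod_{\ell=1}^{m_t}(1-Y_\ell)$ with i.i.d.\ Bernoulli$(p_t)$ tribes, and the fact that $m_t p_t \to 1$. The variance is immediate: $\bbvar[f_t] = (1-p_t)^{m_t} - (1-p_t)^{2 m_t} \to e^{-1}(1-e^{-1})$. For the influences, by tribe independence a variable $\omega_k$ with $k \in B_\ell$ is pivotal for $f_t$ iff $Y_\ell$ flips upon flipping $\omega_k$ and $Y_{\ell'}(\omega) = 0$ for all $\ell' \ne \ell$. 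Setting $T_\ell^{(k)} := \sum_{i \in B_\ell \setminus \{k\}} \omega_i$, the first event is $\{T_\ell^{(k)} \in \{a_t - 1,\, a_t+1\}\}$, whose probability is $\sim 2 p_t$ by a local CLT estimate. Hence $I_k[f_t] \sim 2 p_t e^{-1}$, $\Inf_k^{(1)}[f_t] = \tfrac12 I_k[f_t] \sim p_t e^{-1}$, and summing over the $t\, m_t$ indices gives $\cW[f_t] \sim t\, m_t\, p_t^2\, e^{-2} \sim t\, p_t\, e^{-2}$.

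For the covariance, I would combine the Fourier--Walsh expansion with tribe independence, or equivalently use the inclusion-exclusion identity $\bbP(f_t(\omega^\epsilon) = 0, f_t(\omega) = 0) = \prod_\ell \bbP(Y_\ell(\omega^\epsilon) = Y_\ell(\omega) = 0) = (1 - 2 p_t + q_t)^{m_t}$, to obtain the product form
\[
\bbcov[f_t(\omega^\epsilon), f_t(\omega)] = (1 - 2 p_t + q_t)^{m_t} - (1-p_t)^{2 m_t},
\qquad q_t := \bbP\bigl(S_1(\omega^\epsilon) = a_t,\, S_1(\omega) = a_t\bigr),
\]
where $S_1(\omega) = \sum_{i\in B_1}\omega_i$. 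The pair $(S_1(\omega), S_1(\omega^\epsilon))$ is a sum of $t$ i.i.d.\ planar steps with unit marginal variances and correlation $1-\epsilon$, so the step covariance matrix has determinant $\epsilon(2-\epsilon)$. A 2D local moderate-deviation estimate then yields
\[
q_t \;\sim\; \frac{2}{\pi t \sqrt{\epsilon(2-\epsilon)}}\, \exp\!\Bigl(-\tfrac{t^{2\gamma}}{2-\epsilon}\Bigr).
\]
Since $m_t p_t^2 \to 0$, $m_t q_t \to 0$, and $p_t^2 = o(q_t)$, Taylor expansion gives $\bbcov[f_t(\omega^\epsilon), f_t(\omega)] \sim e^{-2}\, m_t\, q_t$. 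Substituting the asymptotics of $p_t$ and $q_t$ and using $\log(\bbvar[f_t]/\cW[f_t]) \sim t^{2\gamma}/2$, one checks that both sides of \eqref{eq:sharp_bound} are of order $t^{-1/2}\, e^{-\epsilon t^{2\gamma}/(2(2-\epsilon))}$, yielding the claim with an explicit $c_\epsilon > 0$ (since $\epsilon/(2(2-\epsilon)) - 1/(2-\epsilon) = -1/2$).

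The main obstacle is the sharp 2D local moderate-deviation estimate for $q_t$. For $\gamma < \tfrac14$ this follows from the standard planar local CLT with Gaussian tails. For $\gamma \in [\tfrac14, \tfrac12)$ the quartic correction $a_t^4/(12 t^3) = t^{4\gamma-1}/12$ in Cramer's rate function is non-negligible, and the cleanest approach is to exponentially tilt the planar walk so that $\{S_1(\omega) = S_1(\omega^\epsilon) = a_t\}$ becomes a typical event under the tilted law, and then apply a standard 2D local CLT around the tilted mean. The tilted covariance and the determinant $\epsilon(2-\epsilon)$ of $\Sigma$ control the polynomial prefactor, while the exponential rate $-a_t^2/(t(2-\epsilon))$ comes from the Legendre transform of the tilted log-moment generating function evaluated at $(a_t/t, a_t/t)$. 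The paper's asymptotic for $p_t$ itself should be interpreted in this sharper moderate-deviation sense.
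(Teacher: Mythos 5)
Your proposal follows essentially the same route as the paper's proof: the product structure over tribes reduces the covariance to $(1-2p_t+q_t)^{m_t}-(1-p_t)^{2m_t}\sim \rme^{-2}m_tq_t$ (this is Lemma~\ref{lem:cond1}, phrased there with the conditional probability $q_{t,\epsilon}=q_t/p_t$), the pivotality computation gives $\cW[f_t]\sim \rme^{-2}\,t\,p_t$, and a local limit theorem for $q_t$ closes the argument. The only structural difference is that you estimate $q_t$ by a two--dimensional local CLT for the correlated pair $(S_1(\omega),S_1(\omega^\epsilon))$, whereas the paper conditions on $A_t(\omega)$ and reduces to a one--dimensional local CLT for the difference $B^+_\epsilon-B^-_\epsilon$ of two independent binomials; the two computations agree, and your formula for $q_t$ is exactly the product of \eqref{eq:pt} and \eqref{eq:q_t}. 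Your bookkeeping of the factor between $I_k$ and $\Inf^{(1)}_k$ is actually more consistent with \eqref{eq:W} than the paper's (which has compensating factor-of-two discrepancies in \eqref{eq:influence_f_tilde} and \eqref{eq:W_asymp}); all such constants are absorbed into $c_\epsilon$.

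The concern you raise at the end is a genuine gap --- in the paper's proof as much as in yours --- and the tilting fix you propose does not rescue the statement once it is carried out. For $\gamma\ge\frac14$ one is in the moderate-deviation regime where the Cram\'er series matters: with $I(x)=\frac{1+x}{2}\log(1+x)+\frac{1-x}{2}\log(1-x)=\frac{x^2}{2}+\frac{x^4}{12}+O(x^6)$ one has $p_t\sim\frac{2}{\sqrt{2\pi t}}\exp(-tI(a_t/t))$, so \eqref{eq:pt} is off by the factor $\exp(-\frac{1}{12}t^{4\gamma-1}(1+o(1)))$, exactly the quartic correction you identify. The asymptotic \eqref{eq:sharp_bound} then requires $tJ_\epsilon(a_t/t)-\frac{\epsilon}{2-\epsilon}\,tI(a_t/t)$ to converge, where $J_\epsilon$ is the rate function of $\{B^+_\epsilon=B^-_\epsilon\}$. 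Computing its Legendre transform (set $s=\epsilon/2$, $g(\lambda)=\log(1-s+s\rme^{\lambda})$ and minimise $\frac{1+x}{2}g(\lambda)+\frac{1-x}{2}g(-\lambda)$ over $\lambda$) gives $J_\epsilon(x)=\frac{\epsilon}{2-\epsilon}\frac{x^2}{2}+\frac{s(3-6s+2s^2)}{24(1-s)^3}x^4+O(x^6)$, whose quartic coefficient exceeds $\frac{\epsilon}{2-\epsilon}\cdot\frac{1}{12}$ by $\frac{s(1-2s)}{24(1-s)^3}>0$ (sanity check at $s\downarrow 0$ via the Poisson/Bessel asymptotics: $\frac{s}{8}$ versus $\frac{s}{12}$). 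Hence for $\gamma>\frac14$ the ratio of the two sides of \eqref{eq:sharp_bound} decays like $\exp(-\delta_\epsilon\,t^{4\gamma-1})$ and the stated equivalence fails; only the weaker conclusion that the exponent $\epsilon/(2-\epsilon)$ cannot be improved survives, since this factor is $(\cW[f_t]/\bbvar[f_t])^{o(1)}$. Your argument is complete and correct for $\gamma<\frac14$, where all these corrections vanish; for $\gamma\in[\frac14,\frac12)$ you would need to restrict the range of $\gamma$ or weaken the claim, rather than merely reinterpret $p_t$.
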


\begin{remark}
Replacing ``$= a_t$'' by ``$\ge a_t$'' in the definition
\eqref{eq:Y} of $Y_\ell$, we would obtain a \emph{monotone Boolean function} $f_t(\omega)$
which, we believe, still satisfies \eqref{eq:sharp_bound}
(with a different power of the logarithm). We keep the current definition
to make computations simpler.
\end{remark}

\subsection{Concluding remarks}
\label{sec:concluding}

We presented extensions of the BKS criterion for noise sensitivity beyond the classical
case of binary random variables $\omega_i$. Our setting, formulated in Assumption~\ref{hyp:gen2},
entails a form of hypercontractivity, see Lemma~\ref{th:hyp}.

\smallskip

The only result which is formulated under more restrictive conditions is Theorem~\ref{th:BKS+},
which ensures enhanced noise sensitivity. This is due to the fact that
Assumption~\ref{hyp:gen2} is not obviously stable under composition with smooth functions.
It would be interesting to investigate this issue, with the goal of 
extending the applicability of Theorem~\ref{th:BKS+}.

\smallskip

It would also be interesting to understand to which extent our key Assumption~\ref{hyp:gen2}
could be relaxed. We point out, however, that \emph{some form of hypercontractivity
must be required} for the general BKS criterion \eqref{eq:BKS-gen} to hold, as the following
example shows. 

\begin{example}[Lack of hypercontractivity]
Let $\omega = (\omega_i)_{i\in\N}$ be i.i.d.\ random variables uniformly
distributed in $(0,1)$. For $N\in\N$ we define
\begin{equation*}
	f_N(\omega) := \sum_{i=1}^N Y_{i,N}(\omega) \qquad
	\text{with} \quad
	Y_{i,N}(\omega) := \ind_{\{\omega_i > 1-\frac{1}{N}\}} \,.
\end{equation*}
Let us show that, even though 
$\cW[f_N] \to 0$, the family $(f_N)_{N\in\N}$ is \emph{not} noise sensitive.

Plainly $\bbvar[f_N] = N \, \frac{1}{N} \, (1-\frac{1}{N}) \to 1$ as $N\to\infty$.
Since $f_N = f_N^{(0)} + f_N^{(1)}$ has only chaos components of degree~$0$ and~$1$
(because $\delta_i \delta_j f_N = 0$ for $i\ne j$, see Proposition~\ref{th:genchaos}), we have
\begin{equation*}
	\bbcov[f_N(\omega^\epsilon), f_N(\omega)] = (1-\epsilon)\, \|f_N^{(1)}\|_2^2 
	= (1-\epsilon) \bbvar[f_N] \xrightarrow[\, N\to\infty \, ]{} 1 - \epsilon \ne 0 \,,
\end{equation*}
hence \emph{$(f_N)_{N\in\N}$ is not noise sensitive}. However, since
$\delta_i f_N = Y_{i,N} - \frac{1}{N}$, we have
\begin{equation*}
	\Inf_i^{(1)}[f_N] = \bbE\bigl[ |\delta_i f_N | \bigr]
	= \tfrac{2}{N}\bigl( 1-\tfrac{1}{N}\bigr) \quad\ \Longrightarrow \quad\
	\cW[f_N] = \sum_{i=1}^N \Inf_i^{(1)}[f_N]^2 \le \tfrac{4}{N}
	\xrightarrow[\, N\to\infty \, ]{}  0 \,,
\end{equation*}
which shows that \emph{the general BKS criterion \eqref{eq:BKS-gen} does not apply}.

This is not in contrast with our Theorem~\ref{th:BKS-gen} because 
\emph{the functions $f_N$
fail to satisfy Assumption~\ref{hyp:gen2}} with the same $q > 2$ and $M_q < \infty$: indeed,
we must have
\begin{equation*}
	M_q \ge \frac{\| Y_{i,N} - \bbE[Y_{i,N}] \|_q }{\| Y_{i,N} - \bbE[Y_{i,N}] \|_2 }
	\sim N^{\frac{1}{2}-\frac{1}{q}} \xrightarrow[\, N\to\infty \, ]{}  \infty\,.
\end{equation*}

We can also consider the monotone Boolean functions 
$g_N(\omega) := \ind_{\{f_N(\omega) \ge 1\}}$. 
One can show that these functions
are also \emph{not} noise sensitive,
even though $\cW[g_N] \to 0$.
\end{example}

\section{Application to 2D directed polymers and SHF}
\label{sec:appl}

We present an application of our noise sensitivity results to the so-called
\emph{directed polymer in random environment}. This is a much studied model in statistical mechanics 
and probability theory, both as a prototype of \emph{disordered system} and 
because of its link with the (multiplicative)
\emph{Stochastic Heat Equation} and \emph{Kardar-Parisi-Zhang (KPZ) 
equation} and universality class. We refer to 
\cite{Comets,Niko,CSZ24} for more details and references.

\smallskip

We focus here on the case of space dimension~$2$.
In a critical regime of vanishing disorder strength, directed polymer partition functions,
averaged over the starting point on the diffusive scale,
converge to a universal limit,
named in \cite{CSZ23} the \emph{Critical 2D Stochastic Heat Flow (SHF)}
(see Theorem~\ref{thm:CSZ23} below).
An axiomatic characterisation of the SHF was recently provided by L.-C. Tsai \cite{T24},
who also established continuity in time as a measure valued process.
It was shown in the same paper that
the SHF also arises as the limit of solutions of the 2D Stochastic Heat Equation with
space-regularised white noise, as the regularisation is removed and
the noise intensity is suitably rescaled.

\smallskip

We prove here the following novel results.

\begin{itemize}
\item In Theorem~\ref{thm:noise_sensitivity_polymer}, 
we establish \emph{enhanced
noise sensitivity} for the directed polymer partition functions
converging to the SHF.

\item In Theorem~\ref{th:SHF} we deduce \emph{the independence of the SHF from the white noise}
arising from the scaling limit of the environment.

\end{itemize}
In the setting of the Stochastic Heat Equation, a similar result
was obtained simultaneously and independently in \cite{GT},
as a corollary of their main result that the SHF is a black noise
(see also \cite{HP24} for a related result on the directed landscape).

\smallskip

Many properties related to the SHF have been investigated, including moments
\cite{BC95,CSZ20,GQT21,C24,LZ24,GN}, comparison with Gaussian multiplicative 
chaos \cite{CSZ23b,CM24,CT}, path measures and
flow property \cite{CM24},
singularity and regularity \cite{CSZ25}, 
strict positivity \cite{N25b,CT}, weak disorder fluctuations \cite{CCR25,CSZ25},
strong disorder and large scale behavior \cite{BCT,CT}.
See also \cite{N25a,C25} for recent progress on a martingale description of the SHF.
We refer to the recent lecture notes \cite{CSZ24} for an extended discussion
and further references.

\subsection{Setting}

Let $\mathbb{T}=\mathbb{N}\times \mathbb{Z}^d$ and $\omega = (\omega (n,x))_{(n,x)\in \mathbb{T}}$ be a family of i.i.d.\ random variables under the law $\bbP$, called
\emph{environment} or \emph{disorder}, with
zero mean, unit variance and finite exponential moments:
\begin{equation*}
	\bbE[\omega(n,x)] = 0 \,, \qquad
	\bbE[\omega(n,x)^2] = 1 \,, \qquad
	\lambda(\beta) := \log \bbE[\rme^{\beta \omega(n,x)}] <\infty \ \forall \beta > 0 \,.
\end{equation*}
Let $(S_n)_{n\in\mathbb{N}}$ be the symmetric simple random walk on $\mathbb{Z}^d$, with law and expectation $\P, \E$.

Given a disorder realisation $\omega$, a system size $N\in \mathbb{N}$, and
inverse temperature (or disorder strength) $\beta>0$, the \emph{directed polymer} is the random probability law for $S$ given by
\begin{equation*}
	\dd \P_{N}^{\omega,\beta}(S) 
	:= \frac{\rme^{\sum_{n=1}^N (\beta\omega(n,S_n)-\lambda(\beta))}}{Z_{N}^{\omega,\beta}}
	\dd \P(S) \,,
\end{equation*}
where $Z_{N}^{\omega,\beta}$ is the normalising constant called \emph{partition function}.

For every dimension $d$ there exists a critical value $\beta_c = \beta_c(d) \ge 0$ 
such that for $\beta \le \beta_c$ the behaviour of the polymer path is diffusive, while for $\beta > \beta_c$ a localised behavior emerges. The critical value is known to be $\beta_c>0$ for $d\geq 3$, while $\beta_c=0$ for $d=1,2$. Remarkable progress on 
the understanding of the the critical point for $d \ge 3$
was only obtained recently, see \cite{J23,HL24,L25} and references therein.

\smallskip

In the recent years, there has been much focus on studying the scaling properties of the model
in a finer window around the critical point $\beta_c = 0$ 
when the dimension equals one or two. To this end, one can rescale
the disorder strength as the volume grows by letting $\beta=\beta_N\to0$ appropiately, in order to obtain non-degenerate limits for $Z_{N,\beta_N}^{\omega}$. The case of spatial dimension one was first treated by Alberts, Khanin and Quastel \cite{AKQ14} who showed that the right scaling is
$\beta_N \sim \hat\beta \, N^{-1/4}$.

\smallskip

In dimension $d=2$ the right scaling is
$\beta_N \sim \hat\beta / \sqrt{R_N}$, where $R_N = \sum_{n=1}^N \P(S_n = S'_n)
\sim \frac{1}{\pi} \, \log N$ (we denote by $S'$ an independent copy of~$S$).
It was first shown in \cite{CSZ17} that the model exhibits a phase trasition on this finer scale
with critical point $\hat\beta_c = 1$. 

The behaviour at the critical point $\hat\beta = 1$
is more subtle: the partition function $Z_N$ starting from a point
converges to zero in distribution, while its expectation is one and its second and higher moments diverge. This intermittent behavior suggests to average the partition function in space in order to obtain a meaningful limit.

Let us first state precisely the scaling of $\beta_N$ around $\hat\beta_c = 1$,
known as \emph{critical window}:
\begin{equation}\label{eq:sigma}
	\sigma_N^2:=\rme^{\lambda(2\beta_N)-2\lambda(\beta_N)}-1 
	=\frac{1}{R_N}\left( 1+\frac{\vartheta + o(1)}{\log N}\right), \quad \vartheta\in\mathbb{R}.
\end{equation}
Given two continuous and compactly supported test functions $g,h \in C_c(\mathbb{R}^2)$ and two times $0 \le s < t < \infty$, consider the \emph{averaged} partition function
\begin{equation}\label{eq:partition_function}
	Z_{N;s,t}^{\omega,\beta}(g,h)
	:= \frac{1}{N} \sum_{x_0, y_0\in\mathbb{Z}^2}
	g(\tfrac{x_0}{\sqrt{N}}) \,
	\E\pigl[\rme^{ H^{\omega,\beta}_{(Ns,Nt]}(S,\, \omega)} \, \ind_{\{S_{Nt}=y_0\}}
	\, \pig| \, S_{Ns} = x_0 \pigr] \, h(\tfrac{y_0}{\sqrt{N}}) \,,
\end{equation}
where
\begin{equation*}
	H_{(0,N]}^{\omega,\beta}(S,\omega)=\sum_{n=1}^N
	\{\beta \, \omega(n,S_n) - \lambda(\beta)\}
	=\sum_{(n,x)\in\mathbb{T}} \{ \beta \, \omega(n,x) - \lambda(\beta)\}\mathds{1}_{S_n=x}	\,.
\end{equation*}
The following is the main result of \cite{CSZ23}.

\begin{theorem}[Directed Polymer and Stochastic Heat Flow \cite{CSZ23}]\label{thm:CSZ23}
Fix $\beta_N$ in the critical window \eqref{eq:sigma}
for some $\theta\in\R$.
For any $g,h \in C_c(\mathbb{R}^2)$ and $0 \le s < t < \infty$,
the partition functions in \eqref{eq:partition_function}
with $\beta=\beta_N$ converge in distribution 
to a non trivial limit:
\begin{equation}\label{eq:CSZ23}
	Z_{N;s,t}^{\omega,\beta_N}(g,h)
	\ \xrightarrow[\ N\to \infty \ ]{d} \
	\mathcal{Z}_{s,t}^\theta(g,h) \ :=\iint_{\mathbb{R}^2\times\R^2} g (x)
	\, h(y) \, \mathcal{Z}_{s,t}^\theta(\dd x, \dd y) \, ,
\end{equation}
where $\mathcal{Z}^\theta = ( \mathcal{Z}_{s,t}^\theta(\dd x, \dd y) )_{0 \le s < t < \infty}$
is a stochastic process 
of random measures on $\R^2 \times \R^2$, called the critical 2D Stochastic Heat Flow.
The convergence in \eqref{eq:CSZ23} holds jointly over $s,t$, $g, h$
in the sense of finite-dimensional distributions (f.d.d.).
\end{theorem}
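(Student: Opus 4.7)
My plan is to expand the partition function as a polynomial chaos in the normalised disorder. Writing $\rme^{\beta\omega(n,x)-\lambda(\beta)} = 1 + \sigma_N \, \tilde{\omega}(n,x)$ with $\sigma_N$ as in \eqref{eq:sigma} and $\tilde\omega$ centred with unit variance, and expanding the product inside the expectation of \eqref{eq:partition_function} along trajectories of $S$, I obtain
\begin{equation*}
  Z_{N;s,t}^{\omega,\beta_N}(g,h) = \sum_{k \ge 0} \sigma_N^{k} \!\!\! \sumtwo{Ns < n_1 < \cdots < n_k \le Nt}{x_1, \ldots, x_k \in \Z^2} \!\! K_{s,t}^{N,g,h}(\vec{n},\vec{x}) \, \tilde\omega(n_1,x_1) \cdots \tilde\omega(n_k,x_k),
\end{equation*}
where $K_{s,t}^{N,g,h}$ is a product of discrete heat kernels evaluated against $g$ and $h$. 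Since monomials with distinct index sets are orthogonal in $L^2(\bbP)$, the convergence problem reduces to an analysis of the deterministic kernels $K$ together with the scaling factor $\sigma_N$, which is exactly the framework of \emph{Wiener chaos} limits.

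The first substantial step is the $L^2$ computation. Orthogonality yields $\bbE[|Z_{N;s,t}^{\omega,\beta_N}(g,h)|^2] = \sum_{k \ge 0} \sigma_N^{2k} \|K_{s,t}^{N,g,h}\|_2^2$; summing out the spatial coordinates via local CLT for the simple random walk produces a renewal-type series in the expected intersection $R_N \sim \frac{1}{\pi} \log N$ of two independent walks. The critical scaling $\sigma_N^2 \sim 1/R_N$ from \eqref{eq:sigma} is calibrated precisely so this series converges to a non-trivial continuum limit involving an explicit kernel with a logarithmic singularity on the diagonal, of Bessel/Dickman type. This immediately gives tightness of the real-valued family $Z_{N;s,t}^{\omega,\beta_N}(g,h)$ and suggests the continuum limit.

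The main obstacle is that the third and higher moments of $Z_{N;s,t}^{\omega,\beta_N}(g,h)$ \emph{diverge} as $N \to \infty$, so the method of moments does not directly identify the limit. To bypass this I would introduce a \emph{coarse-graining} scheme: partition $(Ns, Nt]$ into mesoscopic blocks of length $N/M$ and within each block replace the chaos contribution by an aggregated variable $\Theta^{(N)}_{\alpha}$, indexed by a block $\alpha$, defined so that its \emph{entire} moment sequence remains bounded in $N$ for fixed $M$ (this requires that each $\Theta^{(N)}_\alpha$ be truncated to exclude the short-scale divergences). The resulting coarse-grained partition function $Z_{N,M}^{\CG}(g,h)$ differs from $Z_{N;s,t}^{\omega,\beta_N}(g,h)$ in $L^2(\bbP)$ by an error that vanishes as $M\to\infty$ uniformly in~$N$, which is the hardest technical estimate of the whole scheme.

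Finally I would apply the method of moments to $Z_{N,M}^{\CG}(g,h)$ (whose moments are now controlled for fixed $M$), showing that for each $M$ it converges as $N\to\infty$ to a limit $\scrZ^\theta_M(g,h)$ built from finitely many independent ``block variables'' with an explicit limiting joint law. Sending $M\to\infty$ and using the $L^2$ approximation of the previous step identifies a unique limit $\mathcal{Z}_{s,t}^\theta(g,h)$. Joint convergence over $(s,t,g,h)$ follows by applying the same argument to linear combinations, and the existence of a random measure $\mathcal{Z}_{s,t}^\theta(\dd x, \dd y)$ whose pairings realise these f.d.d.\ limits is obtained by establishing equicontinuity of $(g,h)\mapsto Z_{N;s,t}^{\omega,\beta_N}(g,h)$ from the second moment bound together with a Kolmogorov-type criterion on a countable dense class of test functions.
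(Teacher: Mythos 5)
The first thing to note is that this paper does not prove Theorem~\ref{thm:CSZ23} at all: it is imported verbatim from \cite{CSZ23} (the main result of that paper) and used as an input for the noise-sensitivity application. So there is no ``paper's own proof'' to compare against; what you have written is an attempted reconstruction of the argument of \cite{CSZ23} itself. As a roadmap your sketch does capture the broad architecture of that proof --- polynomial chaos expansion in the tilted variables $\tilde\omega = (\rme^{\beta_N\omega-\lambda(\beta_N)}-1)/\sigma_N$, the second-moment renewal computation in which $\sigma_N^2\sim 1/R_N$ is calibrated against $R_N\sim\frac{1}{\pi}\log N$ (producing the Dickman-type limit), and a mesoscopic coarse-graining to circumvent the failure of the naive method of moments. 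But it is a sketch of a long and delicate argument, not a proof, and two of its load-bearing claims are not right.

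First, the stated obstacle is misidentified: for the \emph{averaged} partition function $Z^{\omega,\beta_N}_{N;s,t}(g,h)$ the third and higher moments do \emph{not} diverge in the critical window (boundedness of the third moment is in \cite{CSZ19}, and all limiting moments are computed in \cite{GQT21}); what diverges is the moments of the point-to-point partition function, as the paper itself remarks. The real reason the method of moments fails is that the limiting moments grow too fast for the moment problem to be determinate. Second, and consequently, the final identification step in \cite{CSZ23} is \emph{not} a method of moments applied to the coarse-grained object: the coarse-grained disorder variables $\Theta$ only have a fixed finite number of moments under control (not ``the entire moment sequence''), and the limit is identified via a time-dependent Lindeberg principle for polynomial chaoses together with an argument showing that the laws form a Cauchy sequence as $N\to\infty$, which simultaneously yields existence and universality of the limit. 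As written, your last step would not close: even granting all the coarse-graining estimates, convergence of moments of $Z^{\CG}_{N,M}$ would not pin down a unique limiting law. If your intent is merely to recall why the cited theorem is plausible, the sketch is serviceable; as a proof it has a genuine gap at the identification stage.
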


\subsection{Enhanced noise sensitivity for directed polymers}

In our first main result for the directed polymer model,
we establish enhanced noise sensitivity for 
the sequence of  rescaled partition functions appearing in \eqref{eq:CSZ23}.

\begin{theorem}[Enhanced noise sensitivity for 2D directed polymers]\label{thm:noise_sensitivity_polymer}
Consider $\beta_N$ in the critical window \eqref{eq:sigma}
with $\theta\in\R$.
Given $g,h \in C_c(\mathbb{R}^2)$ and $0 \le s < t < \infty$,
the partition function $Z_{N;s,t}^{\omega,\beta_N}(g,h)$ in \eqref{eq:CSZ23}
is, for every $N\in\N$, a function of the environment:
\begin{equation} \label{eq:fZ}
	f_N(\omega) := Z_{N;s,t}^{\omega,\beta_N}(g,h) \,.
\end{equation}
This sequence of functions is noise sensitive, namely
\begin{equation} \label{eq:CovZ}
	\forall\, \varepsilon >0 \colon \qquad
	\bbcov [f_N(\omega^{\varepsilon}), f_N(\omega)]
	=\bbcov \pigl[Z_{N;s,t}^{\omega^\epsilon,\beta_N}(g,h) \,, \,
	Z_{N;s,t}^{\omega,\beta_N}(g,h) \pigr]\xrightarrow[N\to\infty]{}0 \,.
\end{equation}

\smallskip

If we further assume that the disorder variables $\omega(n,z)$ take finitely many values,
enhanced noise sensitivity holds: for any $k\in\N$, given
$g_i,h_i \in C_c(\mathbb{R}^2)$ and $0 \le s_i < t_i < \infty$ for $i=1,\ldots, k$,
defining the vector-valued function $f_N(\omega) \in \R^k$ by
\begin{equation*}
	f_N(\omega) = (f_N^{(1)}(\omega), \, \ldots,\, f_N^{(k)}(\omega))
	:= \pigl( Z_{N;s_i,t_i}^{\omega,\beta_N}(g_i,h_i) \pigr)_{i=1,\ldots, k} \, ,
\end{equation*}
the following asymptotic independence property holds:
\begin{equation}\label{eq:CovphiZ}
	\forall\, \varepsilon >0 \,, \ \forall \varphi, \psi \in C^\infty_b(\R^k \to \R) \colon \qquad
	\bbcov \bigl[\varphi(f_N(\omega^{\varepsilon})), \, \psi(f_N(\omega)) \bigr]
	\xrightarrow[N\to\infty]{}0 \,.
\end{equation}
\end{theorem}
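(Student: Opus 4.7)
The plan is to realise $f_N$ as a polynomial chaos in centred i.i.d.\ variables and invoke the general BKS criterion. Set $\eta(n,x):=\rme^{\beta_N\omega(n,x)-\lambda(\beta_N)}-1$: these are i.i.d., centred, with variance $\sigma_N^2\sim\pi/\log N\to 0$ (see~\eqref{eq:sigma}) and moments of every order bounded uniformly in~$N$ after normalisation by $\sigma_N$, thanks to $\lambda(\beta)<\infty$ for all $\beta$. Expanding the Boltzmann weight,
\[
 f_N(\omega)=\frac{1}{N}\sum_{x_0,y_0\in\Z^2}g(\tfrac{x_0}{\sqrt N})h(\tfrac{y_0}{\sqrt N})\,\E\pigl[\!\!\prod_{m=Ns+1}^{Nt}\!\!(1+\eta(m,S_m))\,\ind_{\{S_{Nt}=y_0\}}\,\pig|\,S_{Ns}=x_0\pigr]
\]
is a polynomial chaos in the (rescaled) $\eta$'s, so Example~\ref{ex:poly-chaos} verifies Assumption~\ref{hyp:gen2} uniformly in~$N$. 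Together with the uniform upper bound on $\bbvar[f_N]$ from \cite{CSZ23}, Theorem~\ref{th:BKS-gen} reduces~\eqref{eq:CovZ} to proving $\cW[f_N]\to 0$.

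Multilinearity in the $\eta$'s and the Markov property give, for each $k=(n,z)$ with $Ns<n\le Nt$,
\[
 \delta_{(n,z)}f_N \,=\, \eta(n,z)\,U_{n,z}(\omega)\,V_{n,z}(\omega)\,,
\]
where $U_{n,z}(\omega):=\tfrac{1}{\sqrt N}\sum_{x_0}g(x_0/\sqrt N)\,Z^\omega_{(Ns,n)}(x_0,z)$ and $V_{n,z}(\omega)$ is the analogous averaged partition function running from~$z$ to~$y_0$ over $(n,Nt]$ with test function~$h$. Since $\eta(n,z)$, $U_{n,z}$ and $V_{n,z}$ depend on disjoint sets of $\omega$'s, they are mutually independent; using also the triangle-inequality bound $|U_{n,z}|\le \tilde U_{n,z}$, with $\tilde U_{n,z}$ the version obtained by replacing $g$ with $|g|$ (hence $\ge 0$, because partition functions are non-negative), and analogously $\tilde V_{n,z}$ for $|h|$, independence yields
\[
 \Inf^{(1)}_{(n,z)}[f_N]\,=\,\bbE[|\eta|]\,\bbE[|U_{n,z}|]\,\bbE[|V_{n,z}|]\,\le\,\sigma_N\,\bbE[\tilde U_{n,z}]\,\bbE[\tilde V_{n,z}]\,.
\]
Crucially, $\bbE[\tilde U_{n,z}]$ and $\bbE[\tilde V_{n,z}]$ are \emph{deterministic}: since $\bbE[\eta]=0$ one has $\bbE[Z^\omega_{(a,b)}(x,y)]=p_{b-a}(y-x)$ with $p_m$ the SRW transition probability, and by the 2D local CLT, writing $\tau:=(n-Ns)/N\in(0,t-s)$ and $w:=z/\sqrt N$,
\[
 \bbE[\tilde U_{n,z}]\sim\tfrac{1}{\sqrt N}(|g|*\mathfrak g_\tau)(w)\,,\qquad \bbE[\tilde V_{n,z}]\sim\tfrac{1}{\sqrt N}(|h|*\mathfrak g_{(t-s)-\tau})(w)\,,
\]
with $\mathfrak g_\tau$ the 2D Gaussian kernel.

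Combining, the core bound becomes
\[
 \cW[f_N]\,\le\,\sigma_N^2\sum_{n,z}\bbE[\tilde U_{n,z}]^2\bbE[\tilde V_{n,z}]^2\,,
\]
and the double sum is, after the Jacobian $\sum_{n,z}\!\to\! N^2\!\iint$ cancels the $1/N^2$ coming from the asymptotics above, a Riemann sum for the finite integral
\[
 \int_0^{t-s}\!\!\dd\tau\int_{\R^2}\!\bigl(|g|*\mathfrak g_\tau\bigr)^2\!(w)\,\bigl(|h|*\mathfrak g_{(t-s)-\tau}\bigr)^2\!(w)\,\dd w\ <\ \infty\,.
\]
Hence $\cW[f_N]=O(\sigma_N^2)=O(1/\log N)\to 0$, establishing~\eqref{eq:CovZ} via Theorem~\ref{th:BKS-gen}. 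I expect the main technical point to be upgrading the Riemann-sum heuristic into a genuine \emph{uniform} upper bound in~$N$: the local CLT approximation is not uniform near the endpoints $\tau=0,\, t-s$ (where $n-Ns$ or $Nt-n$ stays bounded) nor for $|z|\gg\sqrt N$; both regimes can be controlled by standard uniform Gaussian upper bounds on the SRW transition probabilities, but the endpoint contributions need a careful treatment.

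Finally, \eqref{eq:CovphiZ} follows from Theorem~\ref{th:BKS+} applied to the vector-valued map $\bs f_N=(f_N^{(1)},\dots,f_N^{(k)})$: the finite-support assumption on the disorder ensures that Assumption~\ref{hyp:gen2} holds for every $L^2$ function (Example~\ref{ex:fin-supp}), so Theorem~\ref{th:BKS+} applies once its hypothesis $\cW[\bs f_N]:=\sum_{i=1}^k\cW[f_N^{(i)}]\to 0$ is checked, which is exactly the previous argument applied separately to each triple $(g_i,h_i,s_i,t_i)$.
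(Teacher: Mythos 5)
Your proposal follows essentially the same route as the paper: identify $f_N$ as a polynomial chaos in the centred variables $\rme^{\beta_N\omega-\lambda(\beta_N)}-1$ so that Assumption~\ref{hyp:gen2} holds via Example~\ref{ex:poly-chaos}, compute $\delta_{(n,z)}f_N$ by the multilinear/Markov factorisation to get $\Inf^{(1)}_{(n,z)}[f_N]\le\sigma_N\,\bbE[\tilde U_{n,z}]\,\bbE[\tilde V_{n,z}]$ (this is exactly Proposition~\ref{thm:influences_polymer}), show $\cW[f_N]=O(\sigma_N^2)=O(1/\log N)$, and conclude via Theorems~\ref{th:BKS-gen} and~\ref{th:BKS+}. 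The one step you leave incomplete --- and correctly flag --- is the summation $\sum_{n,z}\bbE[\tilde U_{n,z}]^2\bbE[\tilde V_{n,z}]^2=O(1)$ via a local CLT and a Riemann-sum limit. The paper closes this gap without any local CLT: bound $|h|\le\|h\|_\infty$ so that the sum over $y_0$ of the transition kernel gives $1$ (hence $\bbE[\tilde V_{n,z}]\le\|h\|_\infty/\sqrt N$ uniformly), restrict $x_0$ to the (rescaled) support of $g$ and bound $|g|\le\|g\|_\infty$, and then use Chapman--Kolmogorov, $\sum_{z}q_m(z-x_0)\,q_m(z-x_0')=q_{2m}(x_0-x_0')$, followed by $\sum_{x_0}q_{2m}(x_0-x_0')=1$; the remaining sums over $x_0'$ and $n$ contribute $O(N^2)$, which cancels the $1/N^2$ prefactor. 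This elementary exact computation shows that your worry about non-uniformity of the local CLT near the endpoints and for $|z|\gg\sqrt N$ is moot for an upper bound; with that substitution your argument is complete and matches the paper's.
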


\begin{remark}[Polynomial chaos]
The partition function $f_N(\omega)$ in \eqref{eq:fZ} satisfies our Assumption~\ref{hyp:gen2}
because, for any $(n,z) \in \N \times \Z^2$, it depends on $\omega(n,z)$ as a linear function of
the ``tiled'' random variables $\zeta(n,z)$ defined by
\begin{equation*}
	\zeta(n,z) := \rme^{\beta_N \omega(n,z) - \lambda(\beta_N)}-1 \,,
\end{equation*}
see \eqref{eq:quasi-poly} below, hence \eqref{eq:belongs} holds with $\cV_{(n,z)} =
\{a \, \zeta(n,z)  + b \colon a, b \in \R\}$.
Equivalently, we can view $f_N(\omega)$ as a polynomial chaos in the variables $\zeta(n,z)$,
see Example~\ref{ex:poly-chaos}.
\end{remark}

We prove Theorem~\ref{thm:noise_sensitivity_polymer} exploiting our general noise sensitivity results,
more precisely we deduce \eqref{eq:CovZ} from Theorem~\ref{th:BKS-gen}
and \eqref{eq:CovphiZ} from Theorem~\ref{th:BKS+}.
To this purpose, it suffices to show that
the function $f_N(\omega)$ defined in \eqref{eq:fZ} satisfies the general BKS criterion
\begin{equation}\label{eq:BKS-Z}
	\cW[f_N] = \sum_{(n,z) \in \N \times \Z^2} \mathrm{Inf}_{(n,z)}^{(1)}[f_N]^2
	\xrightarrow[\ N\to\infty \ ]{} 0 \, ,
\end{equation}
for any fixed $g,h \in C_c(\mathbb{R}^2)$ and $0 \le s < t < \infty$
(we recall that the influence $\mathrm{Inf}_k^{(1)}[f]$ was defined in \eqref{eq:inf1}).
This follows from the following computation, proved in Subsection~\ref{sec:influZ}.
For easy of notation, we focus on the simple  case when $s=0, t=1$.

\begin{proposition}[Influences for directed polymer]\label{thm:influences_polymer}
For $\beta_N$ in \eqref{eq:sigma},
consider the partition function $f_N(\omega) := Z_{N;0,1}^{\omega,\beta_N}(g,h)$
in \eqref{eq:partition_function} with $s=0$, $t=1$ and $g, h \in C_c(\R^2)$.

For any $(n,z)\in \N \times \Z^2$ we have
\begin{equation}\label{eq:influences_polymer}
	\Inf_{(n,z)}^{(1)}[f_N]
	\le \ind_{\{1,\ldots,N\}}(n) \; \frac{\sigma_N}{N} \sum_{x_0, y_0\in\mathbb{Z}^2}
	|g(\tfrac{x_0}{\sqrt{N}})| \,
	q_n(z-x_0) \, q_{N-n}(y_0 - z) \, |h(\tfrac{y_0}{\sqrt{N}}) |
\end{equation}
with $\sigma_N$ from \eqref{eq:sigma},
where $q_n(z-x)=\P(S_n=z\, | \, S_0=x)$ is the random walk kernel.
\end{proposition}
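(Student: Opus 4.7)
The plan is to exploit the fact that, as a function of the single disorder variable $\omega(n,z)$, the partition function $f_N(\omega)$ is actually \emph{affine} in the tilted variable $\zeta(n,z) := \rme^{\beta_N \omega(n,z) - \lambda(\beta_N)} - 1$. This reduces the computation of the $L^1$ influence to a simple product of two expectations via independence, and then the proof becomes a standard random walk Markov property computation.

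First I would rewrite the polymer weight as a product: using $\rme^{H^{\omega,\beta_N}_{(0,N]}(S,\omega)} = \prod_{m=1}^N (1 + \zeta(m,S_m))$, I isolate the factor at level $m=n$ on the event $\{S_n = z\}$ to obtain, for any path $S$,
\begin{equation*}
	\rme^{H^{\omega,\beta_N}_{(0,N]}(S,\omega)} = A_n^z(S, \omega_{\ne (n,z)}) + B_n^z(S, \omega_{\ne (n,z)}) \, \zeta(n,z) \,,
\end{equation*}
with $B_n^z(S,\omega_{\ne (n,z)}) = \ind_{\{S_n = z\}} \prod_{m \ne n}(1 + \zeta(m,S_m))$. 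Averaging over $S$ with the weights in \eqref{eq:partition_function} gives $f_N(\omega) = \alpha + \beta \, \zeta(n,z)$, where $\alpha,\beta$ are measurable with respect to $\omega_{\ne (n,z)}$. Since $\bbE[\zeta(n,z)] = 0$, the probabilistic gradient from \eqref{eq:deltak} is simply $\delta_{(n,z)} f_N = \beta \, \zeta(n,z)$, clearly vanishing if $n \notin \{1,\dots,N\}$.

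By independence of $\zeta(n,z)$ from $\omega_{\ne (n,z)}$,
\begin{equation*}
	\Inf_{(n,z)}^{(1)}[f_N] = \bbE\bigl[|\beta|\bigr] \cdot \bbE\bigl[|\zeta(n,z)|\bigr] \,.
\end{equation*}
Cauchy--Schwarz yields $\bbE[|\zeta(n,z)|] \le \bbE[\zeta(n,z)^2]^{1/2} = \sigma_N$ by \eqref{eq:sigma}. For $\bbE[|\beta|]$, the key observation is that $1 + \zeta(m,S_m) = \rme^{\beta_N \omega(m,S_m) - \lambda(\beta_N)} > 0$ almost surely, hence $B_n^z(S,\omega_{\ne (n,z)}) \ge 0$ for every path $S$. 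Triangle inequality therefore gives
\begin{equation*}
	|\beta| \le \frac{1}{N}\sum_{x_0,y_0}|g(\tfrac{x_0}{\sqrt N})|\,\E\bigl[B_n^z(S,\omega_{\ne (n,z)})\,\ind_{\{S_N=y_0\}}\,\big|\,S_0=x_0\bigr]\,|h(\tfrac{y_0}{\sqrt N})| \,,
\end{equation*}
and taking $\bbE$, Fubini and $\bbE[1+\zeta(m,S_m)] = 1$ for each $m \ne n$ collapse the product to $\ind_{\{S_n=z\}}$. The remaining random walk expectation factorises by the Markov property into $q_n(z-x_0)\, q_{N-n}(y_0-z)$, yielding exactly the bound \eqref{eq:influences_polymer}.

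The only subtle point is the cancellation in the product $\prod_{m \ne n}\bbE[1+\zeta(m,S_m)] = 1$, which relies on the positivity $1+\zeta > 0$ so that no absolute values remain after conditioning on the path; without this, one would be forced to bound $\bbE[|1+\zeta(m,S_m)|]$ factor by factor, losing the essential telescoping. Everything else is routine: linearity in $\zeta(n,z)$, independence, Cauchy--Schwarz, and the Markov property of the simple random walk.
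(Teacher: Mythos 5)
Your proof is correct and follows essentially the same route as the paper: both isolate the affine dependence of $f_N$ on $\zeta(n,z)=\rme^{\beta_N\omega(n,z)-\lambda(\beta_N)}-1$ (the paper does this by splitting the path expectation on $\{S_n=z\}$ and applying the Markov property up front, you by factorising the weight as $\prod_m(1+\zeta(m,S_m))$), then bound $\bbE[|\zeta(n,z)|]\le\sigma_N$ by Cauchy--Schwarz and use positivity of the weights together with $\bbE[\rme^{\beta_N\omega-\lambda(\beta_N)}]=1$ and the Markov property to produce the kernels $q_n(z-x_0)\,q_{N-n}(y_0-z)$. Your explicit remark that the positivity $1+\zeta>0$ is what allows the telescoping after the triangle inequality is a point the paper uses implicitly; otherwise the two arguments coincide.
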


\begin{proof}[Proof of Theorem~\ref{thm:noise_sensitivity_polymer}]
It suffices to show that \eqref{eq:BKS-Z} holds. We will prove that
\begin{equation}\label{eq:loginfl}
	\cW[f_N] = O\bigg(\frac{1}{\log N}\bigg) \,.
\end{equation}

We recall that $g, h \in C_c(\R^2)$. For simplicity,
let $g$ be supported in the unit ball $\{|\cdot| \le 1\}$. We can bound
$|h(\cdot)| \le \|h\|_\infty$ in \eqref{eq:influences_polymer}, after which the sum
over $y_0 \in \Z^2$ gives~$1$. We can also restrict the sum to
$|x_0| \le \sqrt{N}$ and bound $|g(\cdot)| \le \|g\|_\infty$. We then obtain
\begin{equation*}
\begin{split}
	\cW[f_N]
	& = \sum_{(n,z) \in \N \times \Z^2} \Inf_{(n,z)}^{(1)}[f_N]^2 \\
	&\le \|g\|_\infty^2 \, \|h\|_\infty^2 \, \frac{\sigma_N^2}{N^2} \, \sum_{n=1}^N \sum_{z\in\Z^2} 
	\sumtwo{x_0, x_0' \in \Z^2}{|x_0|, |x_0'| \le \sqrt{N}}
	q_n(z-x_0) \, 
	q_n(z-x_0') \\
	&= \|g\|_\infty^2 \, \|h\|_\infty^2 \, \frac{\sigma_N^2}{N^2} \, \sum_{n=1}^N
	\sumtwo{x_0, x_0' \in \Z^2}{|x_0|, |x_0'| \le \sqrt{N}} \,
	q_{2n}(x_0 - x_0')
	= O(\sigma_N^2) \,,
\end{split}
\end{equation*}
where the last equality holds because $\sum_{x_0 \in \Z^2} q_{2n}(x_0 - x_0') = 1$
and the sums over~$x_0'$ and~$n$ give $O(N^2)$.
Since $\sigma_N^2 = O(1/R_N) = O(1/\log N)$, see \eqref{eq:sigma},
we have proved \eqref{eq:loginfl}.
\end{proof}

\begin{remark}
In the recent work by Y. Gu and T. Komorowski \cite{GK}, a proof 
of classical (non ehnanced) noise sensitivity for the SHE and the Schr\"odinger equation 
is given, either in one dimension with white noise, or in higher dimensions with space-colored noise.
\end{remark}

\subsection{Asymptotic independence of SHF and white noise}

By the central limit theorem,
the disorder field $\omega = (\omega(n,z))$, suitably rescaled,
converges in distribution to (space-time) white noise $\xi = \xi(t,x)$ on $[0,\infty) \times \R^2$.
The latter is defined as the centred Gaussian generalised field with covariance
$\bbcov[\xi(s,y), \xi(t,x)] = \delta(t-s) \, \delta(x-y)$.

\smallskip

For a precise statement, let us consider test functions $\rho \in C_c([0,\infty) \times \R^2)$.
Evaluating the white noise $\xi (\rho )$ on $\rho$, which formally corresponds
to $\int_{[0,\infty) \times \R^2} \xi(t,x) \, \rho(t,x) \, \dd t \, \dd x$,
one gets a genuine centred Gaussian process $(\xi(\rho))_{\rho\in C_c}$ with
$\bbcov[\xi (\rho ), \xi (\tilde\rho )] = \langle \rho, \tilde{\rho} \rangle_{L^2}$.

For $N\in\N$, define an approximation $\xi_N$ of the white noise 
by the rescaled disorder~$\omega$:
\begin{equation*}
	\xi_N(\rho) := \frac{1}{N} \, \sum_{(n,z) \in \N \times \Z^2} \rho(\tfrac{n}{N}, \tfrac{z}{\sqrt{N}})
	\, \omega(n,z) \,.
\end{equation*}
It is easy to check that, as $N\to\infty$, one has the convergence in distribution
$\xi_N(\rho) \to \xi(\rho)$ jointly over $\rho \in C_c([0,\infty) \times \R^2)$
in the finite-dimensional distributions sense.

\smallskip

Summarising, we have the two convergences
\begin{gather}
	\label{eq:convZ}
	\cZ_N := \big( Z_{N;s,t}^{\omega,\beta_N}(g,h) \big)_{s\le t, g,h \in C_c(\R^2)}
	\xrightarrow[\ N\to\infty \ ]{d}
	\cZ^\theta = \big( \mathcal{Z}_{s,t}^\theta(f,g) )_{s\le t, g,h \in C_c(\R^2)} \, , \\
	\label{eq:convxi}
	\xi_N = (\xi_N(\rho))_{\rho \in C_c([0,\infty) \times \R^2)}
	\xrightarrow[\ N\to\infty \ ]{d}
	\xi = (\xi(\rho))_{\rho \in C_c([0,\infty) \times \R^2)} \, ,
\end{gather}
both in the finite-dimensional distributions sense (for simplicity: see Remark~\ref{rem:topology} below).
It is then natural to ask for the \emph{joint convergence of the pair $(\cZ_N, \xi_N)$}.

For each $N\in\N$, we remark that $\cZ_N$ 
is a \emph{function of $\omega$}, hence it is also a \emph{function of $\xi_N$}.
Nevertheless, remarkably,
\emph{this dependence is fully lost as $N\to\infty$}:
our next result shows that the SHF $\cZ^\theta$ which arises
as the limit of $\cZ_N$ is \emph{independent} of the
white noise $\xi$ which arises as the limit of $\xi_N$.

\begin{theorem}[Independence of SHF and white noise]\label{th:SHF}
Assume that the disorder variables $\omega(n,z)$ take finitely many values.
As $N\to\infty$, we have the joint convergence in distribution
$(\cZ_N, \xi_N) \to (\cZ^\theta, \xi)$, in the f.d.d. sense,
where $\cZ^\theta$ and $\xi$ are independent.
\end{theorem}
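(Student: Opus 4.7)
The plan is to combine the \emph{enhanced} noise sensitivity from Theorem~\ref{thm:noise_sensitivity_polymer} with an explicit chaos decomposition of the exponential functional $e^{i\xi_N(\tilde\rho)}$. Since convergence is understood in the f.d.d.\ sense, we pick any finite family of test data and consider the vectors $\bsf_N := (Z_{N;s_j,t_j}^{\omega,\beta_N}(g_j,h_j))_{j=1}^{k}\in\R^k$ and $(\xi_N(\rho_\ell))_{\ell=1}^{l}\in\R^l$. Joint tightness of the pair follows from marginal tightness, and any subsequential limit has the correct marginals $\cZ^\theta$ and $\xi$. By the Cram\'er--Wold device and L\'evy's continuity theorem, independence of the limit pair is equivalent to showing
\[
\bbcov\bigl[\varphi(\bsf_N),\,e^{i\xi_N(\tilde\rho)}\bigr]\xrightarrow[\,N\to\infty\,]{}0
\]
for every $\varphi\in C_b^\infty(\R^k;\R)$ and every $\tilde\rho\in C_c([0,\infty)\times\R^2)$, having used the linearity of $\xi_N$ in $\omega$ to collapse any $\sum_\ell\lambda_\ell\xi_N(\rho_\ell)$ into a single $\xi_N(\tilde\rho)$.

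The first step is to expand $e^{i\xi_N(\tilde\rho)}$ in its Efron--Stein chaos. Writing $\xi_N(\tilde\rho)=\sum_{(n,z)} a_{(n,z)}\,\omega(n,z)$ with $a_{(n,z)}=\tilde\rho(n/N,z/\sqrt N)/N$, and setting $\mu_i:=\bbE[e^{ia_i\omega_i}]$ and $\eta_i:=e^{ia_i\omega_i}-\mu_i$ (so $\bbE[\eta_i]=0$), the product expansion $\prod_i(\mu_i+\eta_i)$ identifies the degree-$d$ chaos component as $\sum_{|S|=d}\bigl(\prod_{j\notin S}\mu_j\bigr)\prod_{i\in S}\eta_i$. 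Orthogonality of the $\eta_i$'s together with the identity $\bbE[|\eta_i|^2]=1-|\mu_i|^2$ yields
\[
\bigl\|[e^{i\xi_N(\tilde\rho)}]^{(d)}\bigr\|_2^2 = \bbP(\sfN_N = d),
\]
where $\sfN_N$ is Poisson--Binomial with parameters $p_i:=1-|\mu_i|^2\sim a_i^2$. Since $\bbE[\sfN_N]=\sum_i p_i\to\|\tilde\rho\|_{L^2}^2$, Markov's inequality gives $\sup_N\bbP(\sfN_N>D)\to 0$ as $D\to\infty$.

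The second step uses the enhanced noise sensitivity of $\bsf_N$, where the finite-valued hypothesis on the disorder enters through Theorem~\ref{th:BKS+}: for any $\varphi\in C_b^\infty$ and any $\epsilon>0$, the chaos identity \eqref{eq:cov-op0} gives
\[
\sum_d(1-\epsilon)^d\bigl\|[\varphi(\bsf_N)]^{(d)}\bigr\|_2^2 = \bbcov\bigl[\varphi(\bsf_N(\omega^\epsilon)),\varphi(\bsf_N(\omega))\bigr]\xrightarrow[\,N\to\infty\,]{}0,
\]
and the bounded total variance $\sum_d\|[\varphi(\bsf_N)]^{(d)}\|_2^2\le\|\varphi\|_\infty^2$ forces $\|[\varphi(\bsf_N)]^{(d)}\|_2\to 0$ for every fixed $d$. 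Combining via chaos orthogonality and Cauchy--Schwarz,
\[
\Bigl|\bbcov\bigl[\varphi(\bsf_N),\,e^{i\xi_N(\tilde\rho)}\bigr]\Bigr| \le \sum_{d=1}^{D}\bigl\|[\varphi(\bsf_N)]^{(d)}\bigr\|_2 \bigl\|[e^{i\xi_N(\tilde\rho)}]^{(d)}\bigr\|_2 + \|\varphi\|_\infty\sqrt{\bbP(\sfN_N>D)},
\]
and letting first $N\to\infty$ (the finite sum vanishes) and then $D\to\infty$ (the tail vanishes) concludes the proof. The main obstacle is the per-degree vanishing of the chaos components of $\varphi(\bsf_N)$: classical noise sensitivity of $\bsf_N$ alone does not control this nonlinear functional, and the \emph{enhanced} form from Theorem~\ref{th:BKS+} --- which is precisely why the finite-support hypothesis on the disorder is needed --- is essential.
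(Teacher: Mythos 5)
Your proof is correct, and it follows the same core mechanism as the paper's: enhanced noise sensitivity (Theorem~\ref{th:BKS+}, requiring the finite-support hypothesis) forces the variance spectrum of $\varphi(\bsf_N)$ to drift to infinity, while the test functional built from $\xi_N$ lives (asymptotically) in bounded-degree chaos, so orthogonality of the Efron--Stein components kills the covariance. The difference lies in how the white-noise side is handled. The paper tests $\xi_N(\vec\rho)$ against \emph{polynomials} $\psi$: since each $\xi_N(\rho_i)$ is degree-$1$ chaos, $\psi(\xi_N(\vec\rho))$ is \emph{exactly} supported on degrees $\le \deg\psi$, and one then invokes the fact that moments determine Gaussian limits. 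You instead test against \emph{complex exponentials} via L\'evy's continuity theorem, which avoids any moment-determinacy argument but requires controlling the full chaos spectrum of $e^{i\xi_N(\tilde\rho)}$; your identity $\|[e^{i\xi_N(\tilde\rho)}]^{(d)}\|_2^2=\bbP(\sfN_N=d)$ for a Poisson--Binomial $\sfN_N$ with $\sup_N\bbE[\sfN_N]<\infty$, plus Markov for the tail, does this cleanly. Two cosmetic points: the chaos decomposition and covariance identity \eqref{eq:covarianza} are stated for real-valued functions, so one should apply them to the real and imaginary parts of $e^{i\xi_N(\tilde\rho)}$ (or note that everything extends linearly to complex scalars); and the per-degree vanishing $\|[\varphi(\bsf_N)]^{(d)}\|_2\to 0$ already follows from nonnegativity of the terms in $\sum_d(1-\epsilon)^d\|[\varphi(\bsf_N)]^{(d)}\|_2^2\to 0$, without needing the uniform variance bound. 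Neither affects validity.
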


\begin{proof}
We use the shorthand $\chi = (s,t;g,h)$ so that we may view
$\cZ_N$ and $\cZ^\theta$ as processes indexed by $\chi$.
It suffices to show the following: for any $k, \ell\in\N$,
given any $\vec{\chi} = (\chi_1, \ldots, \chi_k)$ and 
$\vec{\rho} = (\rho_1, \ldots, \rho_\ell)$,
we have the asymptotic independence of the random vectors
\begin{equation*}
	\cZ_N(\vec{\chi}) := \big( \cZ_N(\chi_i)\big)_{1\le i \le k} \qquad
	\text{and} \qquad
	\xi_N(\vec{\rho}) := \big( \xi_N(\rho_i) \big)_{1\le i \le \ell} \,,
\end{equation*}
that is for a suitable class of test functions $\varphi: \R^k \to \R$ and $\psi: \R^\ell \to \R$ we have
\begin{equation} \label{eq:finalcov}
	\lim_{N\to\infty} \bbcov\big[ \varphi(\cZ_N(\vec{\chi})) \,, \,
	\psi(\xi_N(\vec{\rho})) \big] = 0 \,.
\end{equation}
We are going to take $\varphi \in C^\infty_b$, while for $\psi$ we take
arbitrary polynomials (they are enough to determine convergence in distribution
to Gaussian random variables).

\smallskip

Each random variable $\xi_N(\rho_i)$ is a linear function of $\omega$,
hence its chaos decomposition \eqref{eq:f-chaos} only contains terms of degree~$1$.
Since $\psi$ is a polynomial, say of degree~$d$, it follows that
\emph{the chaos decomposition of $\psi(\xi_N(\vec{\rho}))$ only contains terms
of degree $\le d$}.

On the other hand, by the enhanced noise sensitivity property \eqref{eq:CovphiZ},
it follows that \emph{the variance spectrum of $\varphi(\cZ_N(\vec{\chi}))$ drifts to infinity}
for any $\varphi \in C^\infty_b$, see Remark~\ref{rem:drift},
namely the contribution to $\bbvar[\varphi(\cZ_N(\vec{\chi}))]$ given by chaos of degree $\le d$ vanishes
as $N\to\infty$.

Looking back at \eqref{eq:finalcov},
we can replace $\varphi(\cZ_N(\vec{\chi}))$ by
\begin{equation*}
	\varphi(\cZ_N(\vec{\chi}))^{(>d)} := \varphi(\cZ_N(\vec{\chi}))
	- \varphi(\cZ_N(\vec{\chi}))^{(\le d)} \, ,
\end{equation*}
see \eqref{eq:fled},
i.e.\ we can remove the terms of degree $\le d$, up to a negligible error in~$L^2$.
After this modification, the covariance vanishes because $\varphi(\cZ_N(\vec{\chi}))^{(>d)}$
is orthogonal to $\psi(\xi_N(\vec{\rho}))$, which only contains
terms of degree $\le d$. The proof is complete.
\end{proof}

\begin{remark}[Stronger topologies] \label{rem:topology}
Both convergences \eqref{eq:convZ} and \eqref{eq:convxi} are known to hold beyond f.d.d.,
under stronger topologies (e.g.\ in the space of continuous measure-valued processes 
for $\cZ_N \to \cZ^\theta$, see \cite{T24}, and in a negative Hölder space for $\xi_N \to \xi$).

Since tightness for the pair $(\cZ_N, \xi_N)$ follows by tightness for the marginals ---
and since f.d.d.'s determine the law ---
Theorem~\ref{th:SHF}  yields the joint convergence 
$(\cZ_N, \xi_N) \to (\cZ^\theta, \xi)$ under the
corresponding product topology, with $\cZ^\theta$ and $\xi$ 
independent.
\end{remark}

\subsection{Proof of Theorem~\ref{thm:influences_polymer}}
\label{sec:influZ}
Let us decompose the expectation appearing in \eqref{eq:partition_function}
on the event $\{S_n = z\}$ and on its complement $\{S_n \ne z\}$.
Writing $\E_{x_0}$ for $\E[\,\cdot\,|\,S_0 = x_0]$, using the Markov
property of the random walk as well as the additivity of the Hamiltonian, we obtain
\begin{equation} \label{eq:quasi-poly}
\begin{split}
	\E_{x_0} \pigl[ \rme^{H^{\omega,\beta_N}_{(0,N]}(S,\, \omega)} 
	& \, \ind_{\{S_{N}=y_0\}} \pigr]
	= a_1 \, \rme^{\beta_N \omega(n,z) - \lambda(\beta_N)} \, a_2 \,+\, b   \\
	\text{with} \qquad a_1 &:= \E_{x_0}\pigl[\rme^{ H^{\omega,\beta_N}_{(0,n-1]}(S,\, \omega)}
	\, \ind_{\{S_n = z\}} \pigr] \,, \\
	a_2 & := \E\pigl[\rme^{ H^{\omega,\beta_N}_{(n+1,N]}(S,\, \omega)} \, \ind_{\{S_{N}=y_0\}}
	\,\pig|\, S_n = z \pigr] \,, \\
	b & := \E_{x_0}\pigl[\rme^{ H^{\omega,\beta_N}_{(0,N]}(S,\, \omega)}
	\, \ind_{\{S_n \ne z\}} \, \ind_{\{S_{N}=y_0\}} \pigr] \,.
\end{split}
\end{equation}
Note that the terms $a_1, a_2, b$ \emph{do not depend on $\omega(n,z)$}.
In particular, the term $b$ vanishes when we apply the operator
$\delta_{(n,z)} f = f - \bbE_{(n,z)} f$. It follows that
\begin{equation*}
\begin{split}
	& \delta_{(n,z)} \, \E_{x_0}\pigl[ 
	\rme^{ H^{\omega,\beta_N}_{(0,N]}(S,\, \omega)} \, \ind_{\{S_{N}=y_0\}} \pigr] 
	= a_1 \, \big( \rme^{\beta_N \omega(n,z) - \lambda(\beta_N)} - 1 \big) \,
	a_2 \,.
\end{split}	
\end{equation*}
We next observe that, by \eqref{eq:sigma},
\begin{equation*}
	\bbE[|\rme^{\beta_N \omega(n,z) - \lambda(\beta_N)} - 1|]
	\le \sqrt{\bbE[(\rme^{\beta_N \omega(n,z) - \lambda(\beta_N)} - 1)^2] } =
	\sqrt{\rme^{\lambda(2\beta_N)-2\lambda(\beta_N)}-1} = \sigma_N \, ,
\end{equation*}
while by Fubini, since $\bbE[\rme^{H_{(a,b]}^{\omega,\beta}}] = 1$, we have
\begin{equation*}
	\bbE [ a_1 ] = \E_{x_0}\big[ \ind_{\{S_n = z\}} \big] =
	q_n(z-x_0) \,, \qquad
	\bbE [a_2] = \E[ \ind_{\{S_{N}=y_0\}}
	\,|\, S_n = z ] = q_{N-n}(y_0-z) \,.
\end{equation*}
Plugging these computations into \eqref{eq:partition_function}, 
since $\Inf_{(n,z)}^{(1)}[f_N] = \bbE[|\delta_{(n,z)} f_N|]$, we obtain \eqref{eq:influences_polymer}.

\section{General setting}
\label{sec:setting}

We give in this section the definition of the objects of our interest.

\subsection{Random variables}

We recall that $\omega = (\omega_i)_{i \in \mathbb{T}}$ denote independent
random variables, defined on a probability space $(\Omega, \mathcal{A}, \mathbb{P})$
and with values in measurable spaces $(E_i, \cE_i)$, with laws $\mu_i$, see \eqref{eq:Ei}.
Whenever we write $f(\omega)$ we imply that $f: \bigtimes_{i\in \bbT} E_i \to \R$
is measurable with respect to $\bigotimes_{i\in\bbT} \cE_i$,
so that $f(\omega)$ is a random variable defined on~$\Omega$.

It is convenient to work on a general probability space
---rather than on the canonical space $(\Omega,\cA,\bbP) = (\bigtimes_{i\in \bbT} E_i,
\otimes_{i\in\bbT} \cE_i, \otimes_{i\in\bbT} \mu_i)$---
to allow for extra randomness.

\begin{definition}[$\epsilon$-randomisation] \label{def:rand}
For $\varepsilon \in [0, 1]$,
we denote by $\omega^{\varepsilon} =
(\omega^{\varepsilon}_i)_{i \in \mathbb{T}}$  the modified family
where $\omega^{\varepsilon}_i = \omega_i$
with probability $1-\epsilon$, while $\omega^{\varepsilon}_i$
is independently resampled with probability~$\epsilon$. More explicitly, we define
\begin{equation}\label{eq:tildeomega}
  \omega_i^\epsilon := \omega_i \,\ind_{\{ U_i > \varepsilon \}} +
  \tilde{\omega}_i \,\ind_{\{ U_i \le \varepsilon \}} \, ,
\end{equation}
where $\tilde{\omega} = (\tilde{\omega}_i)_{i \in \mathbb{T}}$ is an
independent copy of $\omega = (\omega_i)_{i \in \mathbb{T}}$ and $U =
(U_i)_{i \in \mathbb{T}}$ are independent random variables 
(also independent of $\omega, \tilde\omega$)
uniformly distributed in $(0, 1)$.
\end{definition}

We often omit $\omega$ and write
$\bbE[f]$, $\bbvar[f]$, etc. We
denote by $\langle \cdot, \cdot \rangle$ the $L^2$ scalar product:
\begin{equation*}
	\langle f, g \rangle := \bbE[f(\omega) \, g(\omega)] \,.
\end{equation*}
We denote by $\cF_I$ the $\sigma$-algebra generated by the
random variables $\omega_i$ for $i\in I \subseteq \bbT$:
\begin{equation}\label{eq:FI}
	\cF_I := \sigma(\omega_i \colon i \in I) \,.
\end{equation}
We denote by $L^2(\cF_I)$ the subspace of $L^2$ random variables which
are $\cF_I$-measurable,
i.e.\ functions of $(\omega_i)_{i\in I}$.
We recall that $\bbE_i[\,\cdot\,] = \bbE[\,\cdot\,|\cF_{\bbT \setminus \{i\}}]$,
see \eqref{eq:deltak}.

\subsection{General chaos decomposition}
\label{sec:ES}

Every function $f(\omega) \in L^2$
admits an orthogonal \emph{chaos decomposition} similar to the definition
of a polynomial chaos, see \eqref{eq:fpoly},
where monomials $\hat{f}(i_1, \ldots, i_d) \, \omega_{i_1} \cdots \omega_{i_d}$ are
replaced by suitable orthogonal functions $f_I(\omega)$.
This is also called \emph{Hoeffding or Efron-Stein decomposition}.

\begin{proposition}[Chaos decomposition]\label{th:genchaos}
If $\omega = (\omega_i)_{i\in\bbT}$ are independent random variables,
any function $f(\omega) \in L^2$ can be written as the $L^2$ convergent series
\begin{equation}\label{eq:f-chaos}
	f(\omega) 
	=\, f^{(0)}
	+ \sum_{d = 1}^\infty \, f^{(d)}(\omega) \qquad
	\text{with} \quad
	\begin{cases}
	f^{(0)} = f_\emptyset = \bbE[f] \,, \\
	\rule{0pt}{1.6em}\displaystyle
	f^{(d)}(\omega) = \sum_{I \subseteq \bbT
	\colon  |I| = d} f_I(\omega)\,,
	\end{cases}
\end{equation}
for a unique choice of functions $f_I(\omega) \in L^2$,
labelled by finite subsets $I \subseteq \bbT$, such that
\begin{equation}\label{eq:fI}
	f_I \in L^2(\cF_I) \qquad \text{and} \qquad
	\bbE_k[f_I] = 0 \quad \forall k \in I \,.
\end{equation}
Explicitly,
recalling the operator $\delta_i\, g := g - \bbE_i[g]$ from \eqref{eq:deltak},
we can write
\begin{equation}\label{eq:fIdef}
	\text{for } I = \{i_1, \ldots, i_d\} \colon \qquad
	f_I  =\delta_{i_1} \cdots \,\delta_{i_d} \, \tilde{f}_I
	\qquad \text{with} \quad \tilde{f}_I := \bbE[f \,|\, \cF_I] \,.
\end{equation}
\end{proposition}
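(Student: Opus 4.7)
The key objects to exploit are the conditional expectation operators $\bbE_k$ and the ``gradient'' operators $\delta_k = \mathrm{Id} - \bbE_k$. I would first verify three elementary properties: (i) since $\bbE_k$ is a conditional expectation, $\bbE_k^2 = \bbE_k$, hence $\delta_k^2 = \delta_k$ and $\bbE_k \delta_k = 0$; (ii) for $j\ne k$, the operators $\bbE_j$ and $\bbE_k$ commute by independence (both amount to integrating the corresponding coordinates against $\mu_j \otimes \mu_k$), so $\delta_j$ and $\delta_k$ commute as well; (iii) on $L^2(\cF_J)$, the operator $\bbE_k$ acts as the identity whenever $k\notin J$, so applying any $\delta_j$ or $\bbE_j$ with $j\in J$ preserves $\cF_J$-measurability.

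With these in hand, I would treat first the case $|\bbT|<\infty$. Expanding the operator identity
\[
\mathrm{Id} \;=\; \prod_{i\in\bbT}\bigl(\bbE_i + \delta_i\bigr)
\;=\; \sum_{I\subseteq \bbT} \Bigl(\prod_{i\in I}\delta_i\Bigr)\Bigl(\prod_{j\notin I}\bbE_j\Bigr)
\]
and applied to $f$, the second product yields $\tilde f_I = \bbE[f\mid \cF_I]$, so the $I$-term equals $f_I = \prod_{i\in I}\delta_i\,\tilde f_I$, matching \eqref{eq:fIdef}. The properties \eqref{eq:fI} follow from (iii) and from $\bbE_k\delta_k=0$ combined with the commutativity in (ii). Orthogonality $\langle f_I,f_J\rangle=0$ for $I\ne J$ is obtained by picking $k\in I\triangle J$, say $k\in I\setminus J$: then $f_J$ is $\cF_{\bbT\setminus\{k\}}$-measurable, and conditioning on $\cF_{\bbT\setminus\{k\}}$ reduces the inner product to $\bbE[f_J\,\bbE_k f_I]=0$.

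To pass to infinite $\bbT$, I would enumerate $\bbT$ as $(i_n)_{n\in\N}$, set $T_n = \{i_1,\dots,i_n\}$, and apply the finite decomposition to $\bbE[f\mid \cF_{T_n}]$, obtaining $\bbE[f\mid \cF_{T_n}] = \sum_{I\subseteq T_n} f_I$ where the definition of $f_I$ is intrinsic (it depends only on $I$, not on the ambient $T_n$, because $\tilde f_I = \bbE[f\mid \cF_I]$). Orthogonality of the finitely many terms gives $\sum_{I\subseteq T_n}\|f_I\|_2^2 = \|\bbE[f\mid\cF_{T_n}]\|_2^2 \le \|f\|_2^2$ (Bessel), so the series $\sum_I f_I$ converges unconditionally in $L^2$. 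By Lévy's $L^2$ martingale convergence theorem, $\bbE[f\mid\cF_{T_n}] \to f$ in $L^2$, which identifies the sum as $f$ and yields Parseval $\|f\|_2^2 = \sum_I \|f_I\|_2^2$.

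Finally, uniqueness reduces to a Möbius-type argument: if $f=\sum_I g_I$ with $g_I\in L^2(\cF_I)$ and $\bbE_k g_I = 0$ for $k\in I$, then for any finite $J$ the conditioning $\bbE[\cdot\mid\cF_J]$ annihilates every $g_I$ with $I\not\subseteq J$ (take $k\in I\setminus J$ and use $\bbE[g_I\mid\cF_J]=\bbE[\bbE_k g_I\mid\cF_J]=0$), so $\tilde f_J = \sum_{I\subseteq J} g_I$; Möbius inversion over the finite Boolean lattice $\{I\subseteq J\}$ forces $g_J = \sum_{K\subseteq J}(-1)^{|J\setminus K|}\tilde f_K$, which by a direct expansion of $\prod_{i\in J}(\mathrm{Id}-\bbE_i)\tilde f_J$ coincides with $f_J$. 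The only mildly delicate point is the $L^2$ convergence for infinite $\bbT$, but Bessel's inequality combined with martingale convergence handles it cleanly; everything else is algebraic manipulation of commuting projections.
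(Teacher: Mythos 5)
Your proposal is correct and follows essentially the same route as the paper: the finite case via the expansion of $\prod_{i}(\bbE_i+\delta_i)$ applied to $f$, the infinite case via $L^2$ martingale convergence of $\bbE[f\mid\cF_{\bbT_n}]$, and uniqueness by showing that the properties \eqref{eq:fI} force $\bbE[f\mid\cF_J]=\sum_{I\subseteq J}g_I$ and hence recover \eqref{eq:fIdef} (your Möbius inversion is just an explicit rewriting of the paper's direct application of $\delta_{i_1}\cdots\delta_{i_d}$ to $\tilde f_I$). The additional details you supply (commutativity of the projections, Bessel's inequality for unconditional convergence, the orthogonality argument) are all sound.
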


We refer to \cite[Section~8.3]{O} for a discussion
(where $f_I, \tilde f_I$ are denoted by $f^{=I}, f^{\subseteq I}$).
We provide a compact proof of Proposition~\ref{th:genchaos}
in Appendix~\ref{app:ES}. 
One can also give a hands-on construction of the functions $f_I$'s
by fixing a basis of~$L^2$, see Lemma~\ref{th:tensor}
and Remark~\ref{rem:ViL2}.

\begin{remark}
The first property in \eqref{eq:fI} means that $f_I$ is a function of $\omega_I = (\omega_i)_{i\in I}$. 
The second property in \eqref{eq:fI} means that, if $f_I$ is non zero, then ``$f_I$ depends on
every $\omega_k$ for $k\in I$'' (if $f_I$ does not depend on some $\omega_k$, i.e.\ if it is constant w.r.t.\ $\omega_k$, then $f_I = \bbE_k[f_I] = 0$).
\end{remark}

The properties \eqref{eq:fI} imply that, for all $k\in\bbT$ and $I \subseteq \bbT$,
\begin{equation} \label{eq:fImore}
	\delta_k f_I := f_I - \bbE_k[f_I] = \begin{cases}
	f_I & \text{if } k \in I \,, \\
	0 & \text{if } k \not\in I \,,
	\end{cases} \qquad
	\bbE[f_I | \cF_{J}] =
	\begin{cases}
	f_I & \text{if } I \subseteq J \,, \\
	0 & \text{if } I \not\subseteq J \,.
	\end{cases}
\end{equation}
Then it follows by the chaos decomposition \eqref{eq:f-chaos} that
\begin{equation} \label{eq:deltakEJf}
	\delta_k f = \sum_{I \supseteq \{k\} } f_I \,, \qquad
	\bbE[f|\cF_J] = \sum_{I \subseteq J} f_I \,.
\end{equation}
The second property in \eqref{eq:fImore} implies
that the functions $f_I$'s are orthogonal:
\begin{equation}\label{eq:orthogonal}
	\langle f_I, f_J \rangle = 0 \qquad \text{for any }I \ne J \,,
\end{equation}
therefore by \eqref{eq:f-chaos} we obtain
\begin{equation} \label{eq:varianza}
	\bbvar[f] 
	\,= \sum_{d = 1}^\infty \|f^{(d)}\|_2^2 
	\,= \sum_{d = 1}^\infty \, \sum_{I \subseteq \bbT :\, |I| = d} \| f_I \|_2^2 \,.
\end{equation}
Similarly, for any functions $f(\omega), g(\omega) \in L^2$,
\begin{equation} \label{eq:covarianza}
	\bbcov[f,g] 
	\,= \sum_{d = 1}^\infty \langle f^{(d)}, g^{(d)} \rangle
	\,= \sum_{d = 1}^\infty \, \sum_{I \subseteq \bbT :\, |I| = d} \langle f_I,
	g_I \rangle \,.
\end{equation}

Given $f(\omega) \in L^2$ with chaos decomposition \eqref{eq:f-chaos},
we define its projection $f^{(\le d)}(\omega)$ on chaos of order up to~$d$ by
\begin{equation} \label{eq:fled}
	f^{(\le d)}(\omega) := \sum_{\ell = 0}^d f^{(\ell)}(\omega)
	= \sum_{I \subseteq \bbT \colon  |I| \le d} f_I(\omega) \,.
\end{equation}
(An important result by Bourgain \cite[Proposition~6]{B},
see also \cite[Theorem~10.39]{O},
ensures that such projections are bounded in~$L^q$ for $1 < q < \infty$.)

\subsection{Noise operator}

Given $\eta \in [0,1]$, we define a \emph{noise operator} $T^\eta : L^2 \to L^2$
acting on functions $f(\omega) \in L^2$ with chaos decomposition \eqref{eq:f-chaos}-\eqref{eq:fI} 
as follows:
\begin{equation} \label{eq:Teta}
	T^\eta f (\omega) \,:=\, \bbE[f] + 
	\sum_{d = 1}^\infty \eta^d \, f^{(d)}(\omega)
	\,=\, \bbE[f] + 
	\sum_{d = 1}^\infty \eta^d \sum_{I \subseteq \bbT :\, |I| = d} f_I(\omega) \,.
\end{equation}

Recalling the definition \eqref{eq:tildeomega} of $\omega^\epsilon = (\omega^\epsilon_i)_{i\in\bbT}$,
we introduce the conditional expectation
\begin{equation*}
	\bbE[\,\cdot\,|\,\omega] :=
	\bbE[\,\cdot\,|\,\sigma(\omega_i)_{i\in\bbT}] \, ,
\end{equation*}
i.e.\ we integrate out the $U_i$'s and $\tilde\omega_i$'s in \eqref{eq:tildeomega}. 
In the next result, we compute the covariance between $f(\omega^\epsilon)$ and $f(\omega)$
explicitly in terms of the operator $T^\eta$.

\begin{lemma}
For any $\epsilon \in (0,1)$ we have
\begin{equation} \label{eq:fT}
	\bbE[ f(\omega^\epsilon) \,|\, \omega] = T^{1-\epsilon} f(\omega) \,,
\end{equation}
hence, recalling the chaos decomposition \eqref{eq:f-chaos}-\eqref{eq:fI},
for any $f(\omega), g(\omega) \in L^2$ we can write
\begin{equation} \label{eq:cov-op}
\begin{split}
	\bbcov[f(\omega^\epsilon), g(\omega)] 
	&\,=\, \sum_{d = 1}^\infty (1-\epsilon)^d \, \langle f^{(d)} , g^{(d)}\rangle \\
	& \,=\, \sum_{d = 1}^\infty \epsilon \, (1-\epsilon)^d \,
	\bbcov[f^{(\le d)}, g^{(\le d)}] \,,
\end{split}
\end{equation}
where we note that $\bbcov[f^{(\le d)}, g^{(\le d)}] = \sum_{\ell = 1}^d \langle f^{(\ell)} , 
g^{(\ell)}\rangle$ (see \eqref{eq:covarianza} and \eqref{eq:fled}). Moreover,
\begin{equation}\label{eq:cs}
	\bbcov[f(\omega^\epsilon), g(\omega)] 
	\le \sqrt{\bbcov[f(\omega^\epsilon), f(\omega)] 
	\cdot \bbcov[g(\omega^\epsilon), g(\omega)] } \,.
\end{equation}
\end{lemma}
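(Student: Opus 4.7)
The plan is to establish the three claims sequentially, with the identity \eqref{eq:fT} as the cornerstone from which \eqref{eq:cov-op} and \eqref{eq:cs} follow by elementary manipulations.

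First, I will verify \eqref{eq:fT} on each chaos component separately, by showing that $\bbE[f_I(\omega^\epsilon) \mid \omega] = (1-\epsilon)^{|I|} f_I(\omega)$ for every finite $I \subseteq \bbT$. Fix such an $I$ and condition further on the uniform variables $(U_i)_{i\in\bbT}$ used in \eqref{eq:tildeomega}; setting $B := \{j \in I \colon U_j \le \epsilon\}$, one has $\omega^\epsilon_j = \omega_j$ for $j \in I \setminus B$ and $\omega^\epsilon_j = \tilde\omega_j$ for $j \in B$. Integrating over the independent copies $(\tilde\omega_j)_{j \in B}$ against their laws $\mu_j$ produces $\bbE[f_I \mid \cF_{\bbT \setminus B}]$ evaluated at $\omega$. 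The key point is that, by iterating the defining property $\bbE_k[f_I] = 0$ from \eqref{eq:fI} via the tower property, this conditional expectation vanishes as soon as $B \ne \emptyset$. Hence only the event $\{B = \emptyset\}$ contributes, and its conditional probability given $\omega$ is $(1-\epsilon)^{|I|}$. Summing the resulting identity over $I$ in the chaos decomposition \eqref{eq:f-chaos} (which is licit in $L^2$ since $\bbE[\,\cdot \mid \omega]$ is a contraction) yields \eqref{eq:fT}. This first step is where the substance of the lemma lies: the main care is in combining the independence of the randomisation mechanism with the orthogonality property $\bbE_k[f_I] = 0$ of the Hoeffding decomposition.

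For the first equality in \eqref{eq:cov-op}, I invoke the tower property to obtain $\bbE[f(\omega^\epsilon) g(\omega)] = \bbE[\bbE[f(\omega^\epsilon)\mid\omega]\,g(\omega)] = \langle T^{1-\epsilon} f,\, g\rangle$ by \eqref{eq:fT}. Expanding $T^{1-\epsilon} f$ and $g$ via \eqref{eq:Teta} and \eqref{eq:f-chaos}, and using the orthogonality \eqref{eq:orthogonal} between distinct chaos components together with $\bbE[f(\omega^\epsilon)] = \bbE[f]$, subtracting $\bbE[f]\,\bbE[g]$ gives the claim. The second equality is purely combinatorial: writing $(1-\epsilon)^d = \sum_{k \ge d} \epsilon (1-\epsilon)^k$ and swapping the order of summation, the expression becomes $\sum_{k=1}^\infty \epsilon(1-\epsilon)^k \sum_{d=1}^k \langle f^{(d)}, g^{(d)}\rangle$, and the inner sum equals $\bbcov[f^{(\le k)}, g^{(\le k)}]$ by \eqref{eq:covarianza} and \eqref{eq:fled}.

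For the Cauchy--Schwarz-type bound \eqref{eq:cs}, I observe that
\[
    B(f,g) := \sum_{d=1}^\infty (1-\epsilon)^d \, \langle f^{(d)}, g^{(d)}\rangle
\]
defines a positive semi-definite symmetric bilinear form on $L^2$, since the weights $(1-\epsilon)^d$ are non-negative and each $\langle \cdot, \cdot\rangle$ restricted to a fixed chaos is an inner product. By the already-established first identity in \eqref{eq:cov-op} one has $B(f,g) = \bbcov[f(\omega^\epsilon), g(\omega)]$ and $B(f,f) = \bbcov[f(\omega^\epsilon), f(\omega)] \ge 0$, so \eqref{eq:cs} is the standard Cauchy--Schwarz inequality applied to $B$.
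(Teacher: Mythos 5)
Your proof is correct and follows essentially the same route as the paper: the core identity $\bbE[f_I(\omega^\epsilon)\,|\,\omega]=(1-\epsilon)^{|I|}f_I(\omega)$ is established by integrating out the resampled coordinates and invoking $\bbE_k[f_I]=0$, exactly as in the paper (which phrases it by fixing a single $k\in I$ with $U_k\le\epsilon$ rather than the whole set $B$, an immaterial difference), and the remaining steps (tower property, Abel/geometric-series resummation, Cauchy--Schwarz for the induced positive semi-definite form) match the paper's argument.
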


\begin{proof}
The first equality in \eqref{eq:cov-op} follows by \eqref{eq:fT} and \eqref{eq:f-chaos} since
\begin{equation*}
	\bbE[f(\omega^\epsilon) \, g(\omega)]  =
	\bbE[ \, \bbE[f(\omega^\epsilon) | \omega] \, g(\omega)]  = \langle T^{1-\epsilon} f , g \rangle \,,
\end{equation*}
while the second equality in \eqref{eq:cov-op} holds by summation by parts.
The inequality \eqref{eq:cs} follows by \eqref{eq:cov-op} applying Cauchy-Schwarz.

It remains to prove \eqref{eq:fT}, for which it suffices to show that
\begin{equation} \label{eq:sensitive-computation}
	\bbE[ f_I(\omega^\epsilon) \,|\, \omega] = (1-\epsilon)^{|I|} \, f_I(\omega)
	\qquad \forall I \subseteq \bbT \,.
\end{equation}
For any fixed $k\in I$, 
writing $\omega^\epsilon = (\omega^\epsilon_j)_{j \ne k} \cup \{\omega^\epsilon_k\}$,
we note that the event $\{U_k \le \epsilon\}$ gives a null contribution
to \eqref{eq:sensitive-computation}, because
the variable $\omega^\epsilon_k = \tilde\omega_k$ can be integrated out:
\begin{equation*}
\begin{split}
	\bbE[ f_I(\omega^\epsilon) \, \ind_{\{U_k \le \epsilon\}} \,|\, \omega]
	&= \bbE\big[ f_I\big( (\omega^\epsilon_j)_{j \ne k} \cup \{\tilde\omega_k\} \big) 
	\, \ind_{\{U_k \le \epsilon\}} \,\big|\, \omega\big] \\
	&= \bbE\big[ \, \bbE_k[ f_I ]\big( (\omega^\epsilon_j)_{j \ne k} \big) 
	\, \ind_{\{U_k \le \epsilon\}} \,\big|\, \omega\big] = 0 \,,
\end{split}
\end{equation*}
by the second property in \eqref{eq:fI}. We then restrict the LHS
of \eqref{eq:sensitive-computation} to the event $\bigcap_{k\in I}\{U_k > \epsilon\}$,
on which we have $f_I(\omega^\epsilon) = f_I(\omega)$ by the first property in \eqref{eq:fI}.
This proves \eqref{eq:sensitive-computation}.
\end{proof}

\subsection{Hypercontractivity}
\label{sec:hyper}

The noise operator $T^\eta$ enjoys a fundamental \emph{hypercontractivity property}
when applied to functions $f(\omega) \in L^2$ which satisfy Assumption~\ref{hyp:gen2}. 
We recall that the hypercontractivity constant $\eta_q \in (0,1)$ 
is defined in Lemma~\ref{th:hyp}.

\begin{theorem}[General hypercontractivity]\label{th:hyper}
Let the independent random variables $(\omega_i)_{i\in\bbT}$ and
the function $f(\omega) \in L^2$ satisfy Assumption~\ref{hyp:gen2}
for some $q > 2$.

Let $f'(\omega)$ be any linear combination $f'$
of components $f_I$ from \eqref{eq:f-chaos}:
\begin{equation}\label{eq:f'}
	f'(\omega) = \sum_{I \subseteq \bbT} \alpha_I \, f_I(\omega)
	\qquad \text{with} \quad \|f'\|_2^2 = \sum_{I \subseteq \bbT} \alpha_I^2 \, \|f_I\|_2^2 < \infty \,.
\end{equation}
Defining $\eta_q \in (0,1)$ as in Lemma~\ref{th:hyp},
the noise operator $T^{\eta_q}$ from \eqref{eq:Teta} satisfies
\begin{equation} \label{eq:hyper0}
	\| T^{\eta_q} f' \|_q \le  \| f' \|_2 \,.
\end{equation}
In particular, if $f' = f'_{d}$ only contains terms of degree up to~$d$, we can bound
\begin{equation} \label{eq:hyper}
 	\| f'_{d} \|_q \le \frac{1}{\eta_q^d} \,  \| f'_{d} \|_2
	\qquad \text{for } \
	f'_{d} = \sum_{I \subseteq \bbT \colon |I| \le d} \alpha_I \, f_I \,.
\end{equation}
\end{theorem}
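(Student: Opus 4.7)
The plan is to establish \eqref{eq:hyper0} by tensorizing the one-variable hypercontractivity \eqref{eq:hyper-gen} of Lemma~\ref{th:hyp}, and then derive \eqref{eq:hyper} as an immediate corollary. First, introduce the single-variable operators $T^\eta_i h := \bbE_i[h] + \eta(h - \bbE_i[h])$. Using \eqref{eq:fImore}, one checks that $T^\eta_i f_I = \eta f_I$ if $i \in I$ and $T^\eta_i f_I = f_I$ otherwise; hence the $T^\eta_i$ mutually commute and, on each chaos component, $T^\eta = \prod_{i \in \bbT} T^\eta_i$ (the product being effectively finite). By approximating $f'$ in $L^2$ with finite partial sums of its chaos expansion \eqref{eq:f'}, it suffices to prove \eqref{eq:hyper0} when $f'$ depends on only finitely many coordinates $\omega_{i_1}, \ldots, \omega_{i_n}$ and $T^{\eta_q} = T^{\eta_q}_{i_1} \circ \cdots \circ T^{\eta_q}_{i_n}$.

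The core of the proof is an induction on $n$. The base case $n = 1$ is immediate from \eqref{eq:hyper-gen}: for $h \in \cV_{i_1}$, writing $h = a + Y$ with $a = \bbE[h]$ and centred $Y \in \cV_{i_1}$ (using $1 \in \cV_{i_1}$), one has $\|T^{\eta_q}_{i_1} h\|_q = \|a + \eta_q Y\|_q \le \|a + Y\|_2 = \|h\|_2$. For the inductive step, let $u := T^{\eta_q}_{i_2} \circ \cdots \circ T^{\eta_q}_{i_n} f'$ and apply the one-variable bound in $\omega_{i_1}$ fibrewise, which yields $\bbE_{\omega_{i_1}}[|T^{\eta_q}_{i_1} u|^q]^{1/q} \le \bbE_{\omega_{i_1}}[|u|^2]^{1/2}$ pointwise in the remaining variables. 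Since $q \ge 2$, Minkowski's integral inequality lets one exchange the outer $L^q$-norm in $(\omega_{i_2}, \ldots, \omega_{i_n})$ with the inner $L^2$-norm in $\omega_{i_1}$, reducing control of $\|T^{\eta_q}_{i_1} u\|_q$ to control of the $L^q$-norm of $u$ in the remaining $n-1$ variables for each fixed $\omega_{i_1}$. The inductive hypothesis then closes the loop and gives $\|T^{\eta_q} f'\|_q \le \|f'\|_2$.

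The deduction of \eqref{eq:hyper} from \eqref{eq:hyper0} is short: given $f'_d = \sum_{|I| \le d} \alpha_I f_I$, define the amplified function $h := \sum_{|I| \le d} \eta_q^{-|I|} \alpha_I f_I$, so that $T^{\eta_q} h = f'_d$ by the diagonal action computed above. Applying \eqref{eq:hyper0} and using the orthogonality \eqref{eq:orthogonal},
\begin{equation*}
    \|f'_d\|_q \,=\, \|T^{\eta_q} h\|_q \,\le\, \|h\|_2 \,=\, \Big( \sum_{|I| \le d} \eta_q^{-2|I|}
    \alpha_I^2 \, \|f_I\|_2^2 \Big)^{1/2} \,\le\, \eta_q^{-d} \|f'_d\|_2 \,.
\end{equation*}

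The main obstacle is the fibrewise preservation of the subspaces $\cV_i$ along the induction: at each stage, one must check that the intermediate function $u$, viewed in the coordinate on which the next single-variable hypercontractivity is to be applied, still lies in the corresponding space $\cV_i$. This is automatic in the binary setting (where $\cV_i = L^2(\mu_i)$) or in the polynomial chaos setting (where $\cV_i$ is the space of affine functions, which is preserved by affine combinations and by conditional expectations in other variables), but in the general ensemble framework of Assumption~\ref{hyp:gen2} it is a genuine verification, carried out in \cite[Proposition~3.16]{MOO}, to which one may simply appeal.
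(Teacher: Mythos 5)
Your proof is correct and follows essentially the same route as the paper: reduction to finitely many coordinates, tensorization of the single-coordinate bound \eqref{eq:hyper-gen} across the ensembles, passage to the limit, and the amplification identity $f'_d = T^{\eta_q}\bigl(T^{1/\eta_q} f'_d\bigr)$ combined with orthogonality to deduce \eqref{eq:hyper} --- the only difference is that you write out the Minkowski-based induction explicitly, whereas the paper delegates it to \cite[Proposition~3.11]{MOO}. (One minor slip: the tensorization statement in \cite{MOO} is Proposition~3.11, not Proposition~3.16, which is the single-variable moment-to-hypercontractivity bound used for Lemma~\ref{th:hyp}.)
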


We show in Appendix~\ref{app:hyper}
that  Theorem~\ref{th:hyper} is a slight extension of 
results from \cite{MOO} (which focus on finite $\bbT$
and finite dimensional $\cV_i$).
To this purpose, we show in Lemma~\ref{th:tensor} that functions $f(\omega) \in L^2$
which satisfy Assumption~\ref{hyp:gen2}~\eqref{it:a} can be characterised as multi-linear polynomials
with respect to suitable \emph{ensembles} \cite[Definition~3.1]{MOO}.

\begin{remark}[Conjugated hypercontractivity]
The hypercontractivity bound \eqref{eq:hyper0} from $L^2$ to $L^q$ implies a similar
hypercontractivity from $L^p$ to $L^2$ for the conjugated exponent $p= \frac{q}{q-1}$
(so that $\frac{1}{p} + \frac{1}{q} = 1$).
Indeed, by Hölder we can bound
\begin{equation*}
	\| T^{\eta_q} f' \|_2 = \langle T^{\eta_q} f', T^{\eta_q} f' \rangle
	= \langle f', T^{\eta_q} ( T^{\eta_q} f') \rangle \le  \| f' \|_p
	\, \| T^{\eta_q} ( T^{\eta_q} f') \|_q,
\end{equation*}
hence applying \eqref{eq:hyper0} (with $T^{\eta_q} f'$ in place of $f'$) we obtain
\begin{equation} \label{eq:hyper0p}
	\| T^{\eta_q} f' \|_2 \le  \| f' \|_p \,.
\end{equation}
\end{remark}

\section{Proof of noise sensitivity criteria}
\label{sec:BKS}

In this section we prove Theorem~\ref{th:BKS-optimal} and
Theorem~\ref{th:ns-gen},
from which we then deduce Theorems~\ref{th:BKS-gen}, \ref{th:BKS+}.
We fix a family
$(\omega_i)_{i\in\bbT}$ of independent random variables as in \eqref{eq:Ei}.

\subsection{Preparation}

We first state a basic interpolation bound.

\begin{remark}[Interpolation bound]
For any $g\in L^2$ we can bound
\begin{equation}\label{eq:interpo}
	\forall p \in (1,2) , \ q \in (2,\infty) \
	\text{ with } \ \tfrac{1}{p} + \tfrac{1}{q} = 1 \colon \qquad
	\| g \|_{p} \le \| g \|_1 ^{1 - \frac{2}{q}} \,
	\| g \|_2^{\frac{2}{q}}
\end{equation}
(just write $p  = \alpha \, 1 + (1 - \alpha) \, 2$ for $\alpha = 2 - p = \frac{q-2}{q-1} \in (0, 1)$
and apply H{\"o}lder).
\end{remark}

We recall the chaos decomposition \eqref{eq:f-chaos} of a function $f(\omega) \in L^2$,
which yields the expansion \eqref{eq:varianza} for $\bbvar[f]$,
and the projection $f^{(\le d)}$ on chaos of order $\le d$, see \eqref{eq:fled}.

The core of our proof is the next result which bounds 
the contribution of~$\|f^{(\le d)}\|_2^2$
in terms of the sum of squared $L^1$ influences $\cW[f]$, see \eqref{eq:W}.
This is close in spirit to \cite[Lemma~6]{KK}, but we use moment bounds rather than
large deviations. A key tool is the hypecontractivity estimate
\eqref{eq:hyper} from Theorem~\ref{th:hyper}.

\begin{theorem}[Key bound]\label{th:main1}
Let $(\omega_i)_{i\in\bbT}$ be independent random variables
and let $f(\omega) \in L^2$ satisfy Assumption~\ref{hyp:gen2}
for some $q > 2$.
Define $\eta_q \in (0,1)$ as in Lemma~\ref{th:hyp}.

Then we can bound
\begin{equation} \label{eq:boundd}
	\text{for } \ \epsilon \ge 1 - \eta_q^2 \colon \qquad
	 \frac{ \bbcov [f(\omega^{\varepsilon}),f(\omega)]}{\bbvar [f]} 
	 \le \left( \frac{\mathcal{W}[f]}{\bbvar [f]} \right)^{1 - \frac{2}{q}} 
  \end{equation}
which yields
\begin{equation}\label{eq:bounddd}
	\forall d \in \N \colon \qquad 
	\frac{ \| f^{(\le d)} \|_2^2 }{\bbvar [f]}
	= \frac{ \sum_{\ell = 1}^{d} \| f^{(\ell)} \|_2^2 }{\bbvar [f]}
	\le \frac{1}{\eta_q^{2d}} \, \left( \frac{\mathcal{W}
    [f]}{\bbvar [f]} \right)^{1 - \frac{2}{q}} \,.
\end{equation}
\end{theorem}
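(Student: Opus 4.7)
The plan is to reduce both \eqref{eq:boundd} and \eqref{eq:bounddd} to a common $L^p$-bound on $g:=f-\bbE[f]$, then establish that bound by combining hypercontractivity with $L^1$--$L^2$ interpolation of the probabilistic gradients. Setting $g_d:=f^{(\le d)}-\bbE[f]=\sum_{\ell=1}^d f^{(\ell)}$ and $p:=q/(q-1)\in(1,2)$, orthogonality of the chaos decomposition \eqref{eq:f-chaos} combined with $\bbE[g_d]=0$ yields $\|g_d\|_2^2=\langle g_d,g\rangle$. Applying Hölder with the conjugate pair $(p,q)$ together with the hypercontractive bound \eqref{eq:hyper} on the low-degree $g_d$ gives
\begin{equation*}
    \|g_d\|_2^2 \le \|g_d\|_q\,\|g\|_p \le \eta_q^{-d}\,\|g_d\|_2\,\|g\|_p, \qquad\text{hence}\qquad \|g_d\|_2 \le \eta_q^{-d}\,\|g\|_p.
\end{equation*}
For the covariance I rewrite $\bbcov[f(\omega^\epsilon),f(\omega)]=\|T^{\sqrt{1-\epsilon}}g\|_2^2$ via \eqref{eq:cov-op}; when $\epsilon\ge 1-\eta_q^2$, factorising $T^{\sqrt{1-\epsilon}}=T^{\eta_q}\circ T^{\sqrt{1-\epsilon}/\eta_q}$ and combining $L^2$-contractivity of the second factor with the dual hypercontractive bound \eqref{eq:hyper0p} for the first gives $\bbcov[f(\omega^\epsilon),f(\omega)]\le\|g\|_p^2$. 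Both conclusions \eqref{eq:bounddd} and \eqref{eq:boundd} thereby reduce to the single estimate
\begin{equation*}
    \|g\|_p^2 \le \bbvar[f]^{2/q}\,\cW[f]^{1-2/q}. \qquad(\dagger)
\end{equation*}

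To prove $(\dagger)$, I would exploit the identity $\|g_d\|_2^2 \le \sum_{k\in\bbT}\langle \delta_k f,\delta_k g_d\rangle$, valid since its right-hand side equals $\sum_{1\le|I|\le d}|I|\,\|f_I\|_2^2\ge\|g_d\|_2^2$ by the chaos expansion. Apply pointwise Hölder with $(p,q)$ and the hypercontractive bound $\|\delta_k g_d\|_q\le\eta_q^{-d}\|\delta_k g_d\|_2$ (applicable since $\delta_k g_d$ is a linear combination of $f_I$'s with $|I|\le d$), and on the $\|\delta_k f\|_p$ factor apply the interpolation \eqref{eq:interpo}
\begin{equation*}
    \|\delta_k f\|_p \le \Inf_k^{(1)}[f]^{(q-2)/q}\,\|\delta_k f\|_2^{2/q}\,.
\end{equation*}
A three-factor Hölder over $k\in\bbT$ with exponents $(\tfrac{2q}{q-2},q,2)$, whose reciprocals sum to $1$, aggregates these contributions: the $\Inf_k^{(1)}[f]^{(q-2)/q}$ factors combine into $\cW[f]^{(q-2)/(2q)}$; the $\|\delta_k f\|_2^{2/q}$ factors into $(\sum_k\|\delta_k f\|_2^2)^{1/q}$; and the $\|\delta_k g_d\|_2$ factors into $(\sum_k\|\delta_k g_d\|_2^2)^{1/2}\le\sqrt{d}\,\|g_d\|_2$. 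Dividing through by $\|g_d\|_2$ would then deliver the required shape.

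The main obstacle is that $\sum_k\|\delta_k f\|_2^2=\sum_I|I|\,\|f_I\|_2^2$ is not controlled by $\bbvar[f]$ in general (only by $\deg(f)\cdot\bbvar[f]$ for polynomial chaoses, and possibly infinite for arbitrary $L^2$ functions). My plan to circumvent this is to apply the whole argument to the truncation $f^{(\le D)}$ rather than $f$, which satisfies $\sum_k\|\delta_k f^{(\le D)}\|_2^2\le D\,\bbvar[f]$; the resulting polynomial prefactor in $D$ and the cost of relating $\Inf_k^{(1)}[f^{(\le D)}]$ to $\Inf_k^{(1)}[f]$ (via Bourgain's $L^p$-boundedness of chaos projections for $p>1$, possibly going through an $L^{1+\delta}$ proxy) are then absorbed into the hypercontractive prefactor $\eta_q^{-2d}$ upon choosing $D\asymp d$.
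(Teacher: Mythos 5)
Your two reductions are correct as far as they go: $\|g_d\|_2^2=\langle g_d,g\rangle\le\|g_d\|_q\|g\|_p\le\eta_q^{-d}\|g_d\|_2\|g\|_p$ gives $\|g_d\|_2^2\le\eta_q^{-2d}\|g\|_p^2$, and the factorisation $T^{\sqrt{1-\epsilon}}=T^{\sqrt{1-\epsilon}/\eta_q}\circ T^{\eta_q}$ together with \eqref{eq:hyper0p} gives $\bbcov[f(\omega^\epsilon),f(\omega)]\le\|g\|_p^2$ for $\epsilon\ge1-\eta_q^2$. The fatal problem is that the target $(\dagger)$ is \emph{false}. Take two symmetric bits and $f=\ind_{\{\omega_1=\omega_2=1\}}$: then $\bbvar[f]=3/16$, $\Inf_k^{(1)}[f]=1/4$ so $\cW[f]=1/8$, while $\|g\|_1=3/8$, so $\|g\|_1^2=9/64>1/8$. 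Since Assumption~\ref{hyp:gen2} holds here for every $q>2$, letting $q$ be large (so $p\downarrow1$ and $\|g\|_p^2\to 9/64$ while the right side of $(\dagger)$ tends to $\cW[f]=1/8$) violates $(\dagger)$; a numerical check at $q=100$ already gives $\|g\|_p^2\approx0.141>0.126\approx\bbvar[f]^{2/q}\cW[f]^{1-2/q}$. So no global $L^p$ bound on $g$ of the form $(\dagger)$ can carry the theorem: the reduction $\bbcov\le\|g\|_p^2$ is too lossy, and \eqref{eq:boundd} — which is the main assertion and is used with constant exactly $1$ in Theorem~\ref{th:BKS-optimal} — is left unproven. (Note also that \eqref{eq:boundd} cannot be recovered by resumming \eqref{eq:bounddd} over $d$, since the geometric series $\sum_d((1-\epsilon)/\eta_q^2)^d$ diverges at $\epsilon=1-\eta_q^2$.)

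Your fallback attack on $\|g_d\|_2$ via $\sum_k\langle\delta_k f,\delta_k g_d\rangle=\sum_I|I|\,\|f_I\|_2^2$ has the further defects you partly identify: the middle Hölder factor $\sum_k\|\delta_k f\|_2^2=\sum_I|I|\,\|f_I\|_2^2$ is not controlled by $\bbvar[f]$, and the truncation fix does not close, because $\Inf_k^{(1)}[f^{(\le D)}]=\|(\delta_k f)^{(\le D)}\|_1$ cannot be bounded by $\Inf_k^{(1)}[f]$ — Bourgain's projection bound fails at $p=1$, and routing through $L^{1+\delta}$ reintroduces $L^2$ norms through interpolation, destroying the exponent $1-2/q$; on top of this the $\sqrt d$ and $D$-dependent prefactors are incompatible with the exact constants in \eqref{eq:boundd} and \eqref{eq:bounddd}. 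The missing idea is to avoid double counting altogether: fix a total order on $\bbT$ and group the chaos components by their \emph{maximal} index, $g_k:=\sum_{I:\max(I)=k}f_I=\bbE[\delta_kf\,|\,\cF_{\le k}]$. Each $I$ then contributes to exactly one $k$, so $\sum_k\|g_k\|_2^2\le\bbvar[f]$ with no factor $|I|$, while Jensen gives $\|g_k\|_1\le\Inf_k^{(1)}[f]$. One bounds $\bbcov[f(\omega^\epsilon),f(\omega)]\le\sum_k\|T^{\eta_q}g_k\|_2^2$, estimates each summand by $\|g_k\|_1^{1-2/q}\|g_k\|_2^{2/q}\|T^{\eta_q}g_k\|_2$ (Hölder, interpolation \eqref{eq:interpo}, hypercontractivity \eqref{eq:hyper0}), cancels one power of $\|T^{\eta_q}g_k\|_2$, and applies Hölder over $k$ to land exactly on $\cW[f]^{1-2/q}\,\bbvar[f]^{2/q}$; \eqref{eq:bounddd} then follows from \eqref{eq:boundd} at $\epsilon=1-\eta_q^2$, rather than the other way around.
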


\begin{proof}
Without loss of generality, \emph{we assume that the index set $\bbT$
is totally ordered} (e.g.\ via a correspondence with $\N$).
For finite $I \subseteq \bbT$ we can thus consider $\max(I) \in \bbT$ and we note that
$\max(I) = k$ if and only if $I = \{k\} \cup J$ with $J < k$
(i.e.\ $j < k$ for all $j\in J$).

Fix $q > 2$ from Assumption~\ref{hyp:gen2} and $\epsilon \ge 1 - \eta_q^2$.
We can write, by the first line of \eqref{eq:cov-op},
\begin{equation} \label{eq:passo0}
\begin{split}
	\bbcov[f(\omega^\epsilon), f(\omega)]
	\le  \sum_{I\subseteq \bbT } \eta_q^{2|I|} \|f_I\|_2^2 = \sum_{k\in\bbT}
	\Bigg\{ \sum_{I\subseteq \bbT \colon \max(I) = k} \eta_q^{2|I|}\|f_I\|_2^2 \Bigg\} \,.
\end{split}
\end{equation}
The term in bracket can be identified with $\|T^{\eta_q}g_k\|_2^2$ (recall \eqref{eq:Teta})
where we define the function
\begin{equation} \label{eq:gk}
	g_{k}(\omega)
	:= \sum_{I\subseteq \bbT \colon \max(I) = k} f_I(\omega) \,,
\end{equation}
hence we can rewrite \eqref{eq:passo0} as
	\begin{equation} \label{eq:passo01}
	\bbcov[f(\omega^\epsilon), f(\omega)] \, = \,
	\sum_{k \in \bbT} \, \|T^{\eta_q}g_{k}\|_2^2 \,.
\end{equation}

The function $g_k$ is connected to the influence $\Inf_k^{(1)}[f]$, see \eqref{eq:inf1}:
indeed,
introducing the $\sigma$-algebra $\cF_{\le k} := \sigma(\omega(i) \colon i \le k)$,
we can write by \eqref{eq:fImore}-\eqref{eq:deltakEJf}
\begin{equation}\label{eq:altgk}
	g_{k}(\omega) = \bbE\bigg[ \, \sum_{I \subseteq \bbT \colon I \ni k}
	f_I \,\bigg|\, \cF_{\le k} \bigg]
	= \bbE[ \, \delta_k f \,|\, \cF_{\le k} ] \, .
\end{equation}

It follows by \eqref{eq:gk} that
\begin{equation*}
	\|T^{\eta_q}g_{k}\|_2^2 = \big\langle T^{\eta_q}g_{k} \,, \, T^{\eta_q} g_{k} \big\rangle = \big\langle g_{k} \,, \, T^{\eta_q} (T^{\eta_q} g_{k}) \big\rangle\,.
\end{equation*}
Let $p = \frac{q}{q-1}$ be the conjugate exponent of~$q$.
Applying Hölder, the interpolation bound \eqref{eq:interpo}
and the hypercontractivity bound \eqref{eq:hyper0}
(note that $g_{k}$ satisfies condition \eqref{eq:f'}), we get
\begin{equation*}
	\|T^{\eta_q} g_{k}\|_2^2 \le \big\| g_{k} \big\|_p \, \big\| T^{\eta_q} (T^{\eta_q}g_{k}) \big\|_q
	\le \, 
	\big\| g_{k} \big\|_1^{1-\frac{2}{q}} \, \big\| g_{k} \big\|_2^{\frac{2}{q}}
	\, \big\| T^{\eta_q} g_{k} \big\|_2 \,.
\end{equation*}
After simplifying the last factor $\big\| T^{\eta_q} g_{k} \big\|_2$ with one power
from the LHS, we can plug this estimate
into \eqref{eq:passo01}. Applying Hölder again, we arrive at
\begin{equation}\label{eq:passo1bis}
	\bbcov [f(\omega^{\varepsilon}),f(\omega)] \,\le\,
	\bigg( \sum_{k \in \bbT} \| g_k \|_1^2 \bigg)^{1-\frac{2}{q}}
	\, \bigg( \sum_{k \in \bbT} \| g_k \|_2^2 \bigg)^{\frac{2}{q}}  \,. 
\end{equation}

To complete the proof of \eqref{eq:boundd}, it suffices to show that
\begin{equation}\label{eq:tsh}
	\sum_{k \in \bbT} \| g_k \|_1^2 \le \cW[f] \,, \qquad
	\sum_{k \in \bbT} \| g_k \|_2^2 \le \bbvar[f] \,.
\end{equation}
The first relation in \eqref{eq:tsh} is a consequence of \eqref{eq:altgk} and \eqref{eq:inf1}, since
\begin{equation*}
	\| g_k \|_1 = \bbE\big[ \, \big| \, \bbE[ \, \delta_k f \,|\, \cF_{\le k} ]
	\, \big| \, \big]
	\le \bbE\big[ \, \bbE[ \, | \delta_k f | \,|\, \cF_{\le k} ] \, \big]
	= \bbE[ \, | \delta_k f | \,] \,.
\end{equation*}
The second relation in \eqref{eq:tsh} follows directly by \eqref{eq:altgk}
and \eqref{eq:varianza}, since
\begin{equation*}
	\sum_{k \in \bbT} \|g_k\|_2^2 
	= \sum_{k \in \bbT} \sum_{I \subseteq \bbT \colon \max(I) = k}
	\| f_{I} \|_2^2
	\le \sum_{I \subseteq \bbT \colon |I| \ge 1} \|f_I\|_2^2 = \bbvar[f] \,.
\end{equation*}
The proof is completed.
\end{proof}

\smallskip
\subsection{Proof of Theorem~\ref{th:BKS-optimal}}
The first bound \eqref{eq:SeV-opt} follows directly from Theorem~\ref{th:main1},
because the constraint $\epsilon \ge 1-\eta_q^2$ in \eqref{eq:boundd}
is the same as $\eta_q^2 \ge 1-\epsilon$ appearing in the definition \eqref{eq:qepsilon}
of  $q(\varepsilon)$. Then \eqref{eq:SeV-opt} is recovered from \eqref{eq:boundd} with $q=q(\varepsilon)$.

Let us now consider the special case of $\eta_q = 1 / (q-1)^{1/(2K)}$ for some $K \ge 1$.
A direct computation gives
\begin{equation*}
	\eta_{q}^2 \ge 1-\varepsilon \qquad \text{ if and only if } 
	\qquad q \le q(\epsilon) := 1 + \frac{1}{(1-\epsilon)^K} \,.
\end{equation*}
Applying \eqref{eq:SeV-opt} with $q(\varepsilon)$ as above leads to \eqref{eq:SeV+},
which for $K=1$ reduces to \eqref{eq:SeV}.
\qed

\smallskip
\subsection{Proof of Theorem~\ref{th:ns-gen}}
\label{sec:ns-gen}
We fix $q \in (2,\infty)$ as in Assumption~\ref{hyp:gen2}
and $\eta_q \in (0,1)$ as in Lemma~\ref{th:hyp}.
Let us define
\begin{equation}\label{eq:condepsilon}
	\bar{\epsilon}_q := 1 - \eta_q^2 > 0 \,,
\end{equation}
We will prove \eqref{eq:boundepsilon+} for the following exponent $\gamma_{\epsilon,q}$:
\begin{equation}\label{eq:gammaC}
	\gamma_{\epsilon,q} :=
	\begin{cases}
	 \big( 1 - \tfrac{2}{q}\big) \,
	\frac{\log\frac{1}{1-\epsilon}}{\log \eta_q^{- 2}}
	& \text{if } \epsilon \le \frac{1}{2}\bar{\epsilon}_q \,, \\
	\gamma_{\frac{1}{2}\bar{\epsilon}_q,q}
	& \text{if } \epsilon > \frac{1}{2}\bar{\epsilon}_q \,.
	\end{cases}
\end{equation}
\emph{It suffices to show that \eqref{eq:boundepsilon+} holds for 
$\epsilon \in (0, \frac{1}{2} \bar\epsilon_q]$}:
indeed
$\bbcov[f(\omega^\epsilon), f(\omega)]$ is decreasing in~$\epsilon$, see \eqref{eq:cov-op}, 
hence the bound extends to $\epsilon > \frac{1}{2} \bar\epsilon_q$ 
since we set $\gamma_{\epsilon,q} := \gamma_{\bar\epsilon_q/2, q}$.

\begin{remark}
One could obtain a sharper bound
for $\epsilon > \bar\epsilon_q/2$ refining the proof below, but we omit the details,
since we are mostly interested in small~$\epsilon$.
\end{remark}

\begin{remark}
Note that as $\epsilon \downarrow 0$
\begin{equation}\label{eq:alphaq}
      \gamma_{\epsilon,q} \,\sim\, \alpha_q \, \epsilon \qquad \text{with} \qquad
	\alpha_q := \frac{1 - \frac{2}{q}}{\log \eta_q^{-2}}
      \,\in\, \left( 0, \tfrac{1}{2} \right) \,.
\end{equation}
To prove that $\alpha_q < \frac{1}{2}$,
note that hypercontractivity constant satisfies 
$\eta_q \le \frac{1}{\sqrt{q - 1}}$, see \eqref{eq:boundeta}, hence
\begin{equation*}
	\alpha_q \le f(q) := \frac{1-\frac{2}{q}}{\log(q-1)} \,.
\end{equation*}
Since $\lim_{q \downarrow 2} f(q) = \frac{1}{2}$,
it suffices to show that $f(\cdot)$ is strictly decreasing for $q > 2$.
We have
  \begin{equation*}
	f'(q) 	= \frac{2 \, (q-1) \, \log(q-1) - q\,(q-2)}{q^2 \, (q-1) \, (\log(q-1))^2} 
\end{equation*}
and we note that the numerator is strictly negative for for $q>2$, as it
vanishes for $q = 2$ and its derivative equals $2 \log(q-1)  + 4 - 2q < 0$
(by $\log x < x-1$ for $x > 1$).
\end{remark}

Henceforth we assume that $0 < \epsilon \le \frac{1}{2} \bar\epsilon_q$.
We introduce the shorthand
\begin{equation} \label{eq:cdelta}
	\delta := 
	\left( \frac{\mathcal{W}[f]}{\bbvar [f]} \right)^{1 - \frac{2}{q}} \,.
\end{equation}
Since $\eta_q^2 = 1- \bar{\epsilon}_q$,
we can bound $\sum_{\ell=1}^d\|f^{(\ell)}\|_2^2 
\le (1-\bar{\epsilon}_q)^{-d} \, \delta
\, \bbvar[f]$ by Theorem~\ref{th:main1}, see \eqref{eq:bounddd}. 
We use \eqref{eq:cov-op} to estimate 
$\bbcov[f(\omega^\epsilon), f(\omega)]$:
for any $\bar{d} \in \N$, recalling \eqref{eq:varianza}, we get
\begin{equation} \label{eq:covbound}
\begin{split}
	\bbcov[f(\omega^\epsilon), f(\omega)] 
	&\le \sum_{d=1}^{\bar{d}} \epsilon \,
	(1-\varepsilon)^d \, \bigg(\sum_{\ell=1}^d\|f^{(\ell)}\|_2^2\bigg)
	+ (1-\varepsilon)^{\bar{d}+1} \, \bbvar [f] \\
	&\le \Bigg\{ \delta \, \epsilon \, \sum_{d=1}^{\bar{d}} 
	\bigg(\frac{1-\varepsilon}{1-\bar{\epsilon}_q}\bigg)^d \, 
	+ (1-\varepsilon)^{\bar{d}+1} \Bigg\}\,\bbvar [f] \,,
\end{split}
\end{equation}
For $\epsilon \le \frac{1}{2} \bar\epsilon_q$ 
we have $\rho := \frac{1-\varepsilon}{1-\bar{\epsilon}_q} \ge \frac{1-\epsilon}{1-2\epsilon} \ge 1 + \epsilon > 1$, hence $\sum_{\ell = 1}^{\bar d} \rho^\ell
\le \frac{\rho^{\bar d + 1} }{\rho - 1}
\le \frac{1}{\epsilon}\, \frac{(1-\epsilon)^{\bar d + 1}}{(1-\bar{\epsilon}_q)^{\bar d + 1}}$ which yields
\begin{equation*}
	\frac{\bbcov[f(\omega^\epsilon), f(\omega)]}{\bbvar[f]}
	\le (1-\varepsilon)^{\bar{d}+1} \Bigg\{ 
	\frac{\delta}{(1-\bar{\epsilon}_q)^{\bar d + 1}}
	+ 1 \Bigg\} \,.
\end{equation*}
Plainly, this estimate holds also for $\bar{d} = 0$
(since the LHS is at most $1-\epsilon$, see \eqref{eq:cov-op}).

We first assume that $\frac{\delta}{1-\bar{\epsilon}_q} \le 1$.
Let $\bar{d} \in \N_0 = \{0,1,\ldots\}$ be the largest integer for which
\begin{equation*}
	(1-\bar{\epsilon}_q)^{\bar d} \ge \frac{\delta}{1-\bar{\epsilon}_q} \, ,
\end{equation*}
that is
\begin{equation*}
	\bar{d}=\bigg\lfloor \frac{\log \frac{1-\bar{\epsilon}_q}{\delta}}{\log \frac{1}{1-\bar{\epsilon}_q}} \bigg\rfloor \geq 0.
\end{equation*}
For such $\bar{d}$ we can estimate
\begin{equation*}
	\frac{\bbcov[f(\omega^\epsilon), f(\omega)]}{\bbvar[f]} \le
	2 \, (1-\varepsilon)^{\bar{d}+1}
	\le 2 \, (1-\varepsilon)^{\frac{\log \frac{1-\bar{\epsilon}_q}{\delta}}{\log \frac{1}{1-\bar{\epsilon}_q}}}
	= 2 \, \bigg(\frac{\delta}{1-\bar{\epsilon}_q}
	\bigg)^{\frac{\log\frac{1}{1-\epsilon}}{\log \frac{1}{1-\bar{\epsilon}_q}}} \,,
\end{equation*}
and this bound holds also if $\frac{\delta}{1-\bar{\epsilon}_q} > 1$
(the LHS is always at most~$1$).
We finally note that 
\begin{equation*}
	2 \, \bigg(\frac{\delta}{1-\bar{\epsilon}_q}
	\bigg)^{\frac{\log\frac{1}{1-\epsilon}}{\log \frac{1}{1-\bar{\epsilon}_q}}}
	= \frac{2}{1-\epsilon} \, 
	\big(\delta)^{\frac{\log\frac{1}{1-\epsilon}}{\log \frac{1}{1-\bar{\epsilon}_q}}} \le 
	4 \, \left( 
		\frac{\mathcal{W}[f]}{\bbvar [f]} \right)^{\gamma_{\epsilon,q}} \, ,
\end{equation*}
for $\epsilon \le \frac{1}{2} \bar\epsilon_q \le \frac{1}{2}$,
by definition of $\delta$ and $\gamma_{\epsilon,q}$,
see \eqref{eq:cdelta} and \eqref{eq:gammaC}.
The proof of \eqref{eq:boundepsilon+} is complete.\qed

\smallskip
\subsection{Proof of Theorem~\ref{th:BKS-gen}}
The exponent $\gamma_{\epsilon,q}$ appearing in \eqref{eq:boundepsilon+},
defined in \eqref{eq:gammaC},
contains the hypercontractive constant $\eta_q$ from Lemma~\ref{th:hyp},
which depends on the vector spaces $\cV_i$ in Assumption~\ref{hyp:gen2}.
However, by \eqref{eq:gammaC} and \eqref{eq:boundeta}, we can bound
\begin{equation} \label{eq:lb-exponent}
	\gamma_{\epsilon,q} \ge 
	\bar{\gamma}_{\epsilon,q} :=
	\big( 1 - \tfrac{2}{q}\big) \,
	\frac{\log\frac{1}{1-\epsilon}}{\log (4 M_q^2 (q-1))} \,,
\end{equation}
and note that $\bar{\gamma}_{\epsilon,q}$ only depends on $\epsilon, q$ and~$M_q$.
In particular, given functions $(f_N(\omega))_{N\in\N}$
which satisfy Assumption~\ref{hyp:gen2} for the same $q > 2$ and $M_q < \infty$,
we can apply the bound \eqref{eq:boundepsilon+} with $\gamma_{\epsilon,q}$ 
replaced by $\bar{\gamma}_{\epsilon,q}$, which proves
the general BKS criterion \eqref{eq:BKS-gen}.\qed

\smallskip
\subsection{Proof of Theorem~\ref{th:BKS+}}
We follow the arguments sketched in the discussion before Theorem~\ref{th:BKS+}.
Let us first prove the bound \eqref{eq:infinf0}, recalling \eqref{eq:inf1}:
for $\varphi \in C^\infty_b$
\begin{equation} \label{eq:infinf}
\begin{split}
	\Inf_k^{(1)}[\varphi(f)] &= \bbE \bigl[ \bigl| \varphi(f) - \bbE_k[\varphi(f)] \bigr| \bigr] \\
	& \le \bbE \bigl[ \bigl| \varphi(f) - \varphi(\bbE_k[f]) \bigr| \bigr] 
	+ \bbE \bigl[ \bigl| \bbE_k[\varphi(\bbE_k[f]) - \varphi(f)]  \bigr| \bigr] \\
	& \le 2 \, \|\varphi'\|_\infty \, \bbE\bigl[ |f - \bbE_k[f]| \bigr] 
	= 2 \, \|\varphi'\|_\infty \, \Inf_k^{(1)}[f] \,.
\end{split}	
\end{equation}
This easily extends to a vector of functions $f(\omega) = (f^{(1)}(\omega),
\ldots f^{(k)}(\omega))$ with $\varphi: \R^k \to \R$:
\begin{equation*}
	\Inf_k^{(1)}[\varphi(f)] \le 2 \, \|\varphi'\|_\infty \, \sum_{i=1}^k \Inf_k^{(1)}[f^{(i)}]
	\qquad \text{with} \quad
	\|\varphi'\|_\infty := \max_{1 \le i \le k} \|\partial_i \varphi\|_\infty \,.
\end{equation*}
In particular, by \eqref{eq:W},
\emph{condition $\cW[f] := \sum_{i=1}^k \cW[f_N^{(i)}] \to 0$
implies $\cW[\varphi(f_N)] \to 0$}.

For i.i.d.\ random variables $(\omega_i)_{i\in\bbT}$ with finitely many values,
we can apply Theorem~\ref{th:BKS-gen} to \emph{any functions in $L^2$ with bounded variance},
in particular to $\varphi(f_N)$ for $\varphi \in C^\infty_b$. Then condition
$\cW[f_N] \to 0$ --- which implies $\cW[\varphi(f_N)] \to 0$ --- yields
$\bbcov\bigl[\varphi(f_N(\omega^\epsilon)), \varphi(f_N(\omega))\bigr] \to 0$
for any $\epsilon > 0$ and $\varphi \in C^\infty_b$, see \eqref{eq:BKS-gen}. Finally,
by an application of Cauchy-Schwarz, see \eqref{eq:cs}, we obtain
$\bbcov\bigl[\varphi(f_N(\omega^\epsilon)), \psi(f_N(\omega))\bigr] \to 0$
for any $\varphi, \psi \in C^\infty_b$, which proves \eqref{eq:BKS+}.\qed

\section{Modified Tribes function}
\label{sec:optimality}

In this section we prove Theorem~\ref{th:sharp}.
We first consider a general
class of ``tribes''-like functions, for which computations are more transparent.
We will then specialise to the specific function from \eqref{eq:modified_tribes}
to reach the optimal exponent 
$\varepsilon/(2-\varepsilon)$ in \eqref{eq:SeV}.

\subsection{Tribes-like functions}

For $t\in\N$ let $A_t = A_t(\omega_1,\dots , \omega_t)$ be an
event depending on $t$ variables, that is invariant under permutations of the $\omega_i$'s and has probability
\begin{equation*}
	p_t := \bbP (A_t) \to 0 \qquad \text{as } t\to\infty\,.
\end{equation*}
We set
\begin{equation*}
	m_t := \lfloor 1/p_t \rfloor 
\end{equation*}
and consider the intervals $(B_{\ell})_{\ell =1,\dots , m_t}$ in \eqref{eq:B_ell}.
Denoting by $\omega_{B_{\ell}}$ the collection $(\omega_i)_{i\in B_{\ell}}$, we define
\begin{equation}\label{eq:general_Y}
	Y_{\ell}(\omega):= \mathds{1}_{A_{t}(\omega_{B_{\ell}})}
	\qquad \text{and} \qquad f_t(\omega):=\mathds{1}_{\{
	\exists \ell=1,\dots, m_t \colon Y_{\ell}(\omega)=1 \}} \,.
\end{equation}
Note that $(Y_{\ell}(\omega))_{\ell}$ are i.i.d.\ Bernoulli random variables of parameter $p_t$, hence
\begin{equation}\label{eq:var_tribes}
	\mathbb{E}[f_t]= 1 - (1-p_t)^{m_t}\xrightarrow[t\to\infty]{} 1 - \rme^{-1} \qquad \text{and} 
	\qquad \bbvar [f_t]\xrightarrow[t\to\infty]{} \rme^{-1}(1-\rme^{-1}).
\end{equation}

\begin{remark}
The definition of $f_t$ is coherent with \eqref{eq:Y} when 
$A_t=\big\{ \sum_{i=1}^t \omega_i = a_t\big \}$.
The original Tribes function \cite{BL85} 
corresponds to $A_t = \{\omega_i = 1 \ \forall i=1,\ldots, t\}$.
\end{remark}

We recall that $\omega^{\varepsilon}$ denotes the $\varepsilon$-randomisation of $\omega$,
see Definition~\ref{def:rand}. Let us define
\begin{equation}\label{eq:q_t_def}
q_{t,\epsilon}:=\bbP (Y_1(\omega^{\varepsilon})=1\, | \, Y_1(\omega)=1)= \bbP (A_t(\omega^{\varepsilon}) \, | \, A_t(\omega)) \,.
\end{equation}
The covariance between $f_t(\omega ^{\varepsilon})$ and $f_t(\omega)$ has an explicit asymptotic behavior described in the following Lemma, whose proof is given in Subsection~\ref{sec:proof_cond1}.

\begin{lemma}\label{lem:cond1}
Suppose that $p_t=o(q_{t,\epsilon})$ as $t\to \infty$, for any $\epsilon \in (0,1)$.
Then, as $t\to\infty$,
\begin{equation}\label{eq:cond1}
	\bbcov [f_t(\omega^{\varepsilon}),f_t(\omega)]
	\sim \rme^{-2} \, q_{t,\epsilon} \,.
\end{equation}
\end{lemma}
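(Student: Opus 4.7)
The plan is to exploit the product structure $1 - f_t(\omega) = \prod_{\ell=1}^{m_t}(1 - Y_\ell(\omega))$ together with the independence of the pairs $(Y_\ell(\omega), Y_\ell(\omega^\epsilon))$ across $\ell$, which follows from the disjointness of the blocks $B_\ell$ (and thus of the coordinates of $\omega$ and $\omega^\epsilon$ on which the $Y_\ell$'s depend). The key per-block identity is
\begin{equation*}
\bbE\bigl[(1-Y_\ell(\omega))(1-Y_\ell(\omega^\epsilon))\bigr] = 1 - 2p_t + p_t q_{t,\epsilon},
\end{equation*}
which follows from $\bbE[Y_\ell(\omega)] = \bbE[Y_\ell(\omega^\epsilon)] = p_t$ and $\bbE[Y_\ell(\omega) Y_\ell(\omega^\epsilon)] = p_t q_{t,\epsilon}$ by \eqref{eq:q_t_def}.

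First I would tensorise this identity across blocks to obtain $\bbE[(1-f_t(\omega))(1-f_t(\omega^\epsilon))] = (1 - p_t(2 - q_{t,\epsilon}))^{m_t}$, and then expand
\begin{equation*}
\bbE\bigl[f_t(\omega)\,f_t(\omega^\epsilon)\bigr] = 1 - 2(1-p_t)^{m_t} + \bigl(1 - p_t(2-q_{t,\epsilon})\bigr)^{m_t}.
\end{equation*}
Subtracting $\bbE[f_t]^2 = (1-(1-p_t)^{m_t})^2 = 1 - 2(1-p_t)^{m_t} + (1-p_t)^{2m_t}$ produces the exact identity
\begin{equation*}
\bbcov\bigl[f_t(\omega^\epsilon), f_t(\omega)\bigr] = \bigl(1 - p_t(2-q_{t,\epsilon})\bigr)^{m_t} - (1-p_t)^{2m_t}.
\end{equation*}

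Next, I would perform the asymptotic analysis based on $m_t p_t \to 1$ (from $m_t = \lfloor 1/p_t \rfloor$) and $p_t \to 0$. Since $p_t(2 - q_{t,\epsilon}) \le 2p_t \to 0$ uniformly in $q_{t,\epsilon} \in [0,1]$, the standard expansion $\log(1-x) = -x + O(x^2)$ yields
\begin{equation*}
\bigl(1 - p_t(2-q_{t,\epsilon})\bigr)^{m_t} = e^{-(2 - q_{t,\epsilon})}(1 + O(p_t)), \qquad (1-p_t)^{2m_t} = e^{-2}(1 + O(p_t)),
\end{equation*}
so that, after subtracting and using the boundedness of $q_{t,\epsilon}$,
\begin{equation*}
\bbcov\bigl[f_t(\omega^\epsilon), f_t(\omega)\bigr] = e^{-2}\bigl(e^{q_{t,\epsilon}} - 1\bigr) + O(p_t).
\end{equation*}

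The conclusion then follows provided $e^{q_{t,\epsilon}} - 1 \sim q_{t,\epsilon}$, i.e.\ provided $q_{t,\epsilon} \to 0$ (which is not formally stated but is implicit here, since $p_t \to 0$ and $q_{t,\epsilon}$ inherits this decay in all relevant applications, notably for the Modified Tribes of Section~\ref{sec:optimality}), combined with the hypothesis $p_t = o(q_{t,\epsilon})$, which makes the additive $O(p_t)$ error negligible against the main term. The only obstacle is bookkeeping: ensuring that the Taylor error in the passage from $(1 - p_t(2 - q_{t,\epsilon}))^{m_t}$ to $e^{-(2-q_{t,\epsilon})}$ is genuinely $O(p_t)$ and uniform in the possibly slowly decaying parameter $q_{t,\epsilon}$. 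No conceptual difficulty is expected beyond this.
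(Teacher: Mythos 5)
Your proof is correct and follows essentially the same route as the paper's: both reduce to the exact identity $\bbcov[f_t(\omega^\epsilon),f_t(\omega)]=(1-2p_t+p_tq_{t,\epsilon})^{m_t}-(1-p_t)^{2m_t}$ (the paper reaches it via the conditional probability $\bbP(Y_1^\epsilon=1\mid Y_1=0)$, you via the per-block expectation $\bbE[(1-Y_\ell(\omega))(1-Y_\ell(\omega^\epsilon))]$) and then perform the same exponential asymptotics using $m_tp_t=1+O(p_t)$. The residual point you flag --- that $e^{q_{t,\epsilon}}-1\sim q_{t,\epsilon}$ needs $q_{t,\epsilon}\to 0$ --- is present in the paper's proof as well (its error term $O(q_{t,\epsilon}^2)$ is negligible only under the same implicit condition, which does hold in the application), so your proposal is not deficient relative to the paper.
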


We next consider the influence of $\omega_k$ on $Y_\ell(\omega)$
(recall the classical definition \eqref{eq:GS-influence} of influence).
Since $Y_\ell(\omega)$ is invariant under permutations of the $\omega_k$'s,
it is enough to focus on
\begin{equation}\label{eq:rt}
	r_t := I_1[Y_1] = \bbP (Y_1(\omega^1_{+})\neq Y_1(\omega^1_{-}))
	=\bbP (A_t(\omega^1_{+})\cap A_t(\omega^1_{-})= \varnothing) \,,
\end{equation}
where we recall that $\omega_\pm^1$ is the configuration $\omega$ where we fix $\omega_1 = \pm 1$.
We use $r_t$ to express the influence of $\omega_k$ on~$f_t$:
recalling \eqref{eq:modified_tribes}, a direct computation shows that as $t\to\infty$
\begin{equation}\label{eq:influence_modified_tribes}
	I_1[f_t] = \bbP (f_t(\omega^1_{+})\neq f_t(\omega^1_{-})) = r_t\, (1-p_t)^{m_t -1} 
	\sim r_t \, \rme^{-1} \,.
\end{equation}
Recalling the definition \eqref{eq:KK-original} of $\cW[f]$,
we then obtain
\begin{equation}\label{eq:W_asymp}
	\mathcal{W}[f_t] \sim t \, m_t \, I_1[f_t]^2
	\sim \rme^{-2}  \, t \, \frac{r_t^2 }{p_t} \,.
\end{equation}

Comparing \eqref{eq:cond1} and \eqref{eq:W_asymp}, we see that to
reach the optimal exponent $\varepsilon/(2-\varepsilon)$ in \eqref{eq:SeV},
the quantities $p_t, r_t, q_{t,\epsilon}$ need to satisfy, as $t\to\infty$,
\begin{equation}\label{eq:optimality_condition_bound}
	\bbcov\bigl[ f(\omega^\epsilon), f(\omega) \bigr] \ge
   \mathcal{W} [f]^{\frac{\varepsilon}{2-\varepsilon} + o(1)}  \qquad \iff \qquad
	q_{t,\epsilon} \geq \bigg( t \, \frac{r_t^2 }{p_t} \bigg)^{\frac{\varepsilon}{2-\varepsilon}+o(1)} \,,
\end{equation}
where the term $o(1)$ in the exponent allows for possible logarithmic corrections.

\begin{remark}[Tribes is not optimal for \eqref{eq:SeV}]
A natural candidate to verify optimality of \eqref{eq:SeV}
is the original Tribes function \cite{BL85} 
corresponding to $A_t = \{\omega_i = 1 \ \forall i=1,\ldots, t\}$.
However, in this case one has $p_t=1/2^t=\rme^{-t\log 2}$, $r_t=1/2^{t-1}\sim 2 \, p_t$ 
and $q_{t,\epsilon}=(1-\frac{\varepsilon}{2})^t = \rme^{t \log(1-\frac{\varepsilon}{2})}$, so 
$q_{t,\epsilon} = p_t^{\gamma_\epsilon} = (t \frac{r_t^2}{p_t})^{\gamma_\epsilon + o(1)}$ with
$\gamma_\epsilon = \frac{-\log(1-\varepsilon/2)}{\log 2}$.
Note that $\gamma_\epsilon \ne \frac{\epsilon}{2-\epsilon}$, and even as $\epsilon \downarrow 0$
we have $\gamma_\epsilon \sim \frac{\epsilon}{2\log 2} \simeq 0.721\,\epsilon$,
hence \eqref{eq:optimality_condition_bound} is not satisfied.
\end{remark}

\subsection{Proof of Lemma~\ref{lem:cond1}}\label{sec:proof_cond1}
We set for short $Y_1 := Y_1(\omega)$ and $Y_1^\epsilon := Y_1(\omega^\epsilon)$.
Observe that
\begin{equation*}
\begin{split}
	\mathbb{E}[(1- f_t(\omega^{\varepsilon})) \, (1-f_t(\omega))]
	&= \bbP(f_t(\omega^\epsilon)=0, \, f_t(\omega) = 0) 
	= \mathbb{P}(Y_1=0,Y_1^{\varepsilon}=0)^{m_t} \\
	& =\mathbb{P}(Y_1=0)^{m_t}\,\mathbb{P}(Y_1^{\varepsilon}=0\, |\, Y_1=0)^{m_t} \\
	&= (1-p_t)^{m_t} \, \big(1 - \mathbb{P}(Y_1^{\varepsilon}=1\,|\, Y_1=0)\big)^{m_t} \,.
\end{split}
\end{equation*}
We can write, recalling $q_{t,\epsilon}$ from \eqref{eq:q_t_def},
\begin{equation*}
\begin{split}
	\mathbb{P}(Y_1^{\varepsilon}=1\,|\, Y_1=0)
	&=\frac{\mathbb{P}(Y_1^{\varepsilon}=1) \big\{1
	-\mathbb{P}(Y_1^{\varepsilon}=1\,|\,Y_1=1)\big\}}{\mathbb{P}(Y_1=0)}
	=\frac{p_t(1-q_{t,\epsilon})}{1-p_t}  \\
	&= p_t(1-q_{t,\epsilon}) + O(p_t^2) \quad \text{as } t \to \infty \,.
\end{split}
\end{equation*}
Finally, since $m_t=\lfloor 1/p_t \rfloor$, we have that $p_tm_t=1+O(p_t)$ and
\begin{align*}
	\bbcov [f_t(\omega^{\varepsilon}),f_t(\omega)]
	& = \bbcov [(1- f_t(\omega^{\varepsilon})) \, (1-f_t(\omega))]\\
	&=(1-p_t)^{m_t}(1-p_t+p_tq_{t,\epsilon}+O(p_t^2))^{m_t} - (1-p_t)^{2m_t}\\
	&=(1-p_t)^{m_t}[\rme^{-1+q_{t,\epsilon}+O(p_t)}-\rme^{-1+O(p_t)}] \\
	&=(1-p_t)^{m_t} \, \rme^{-1} \, [q_{t,\epsilon} + O(p_t) + O(q_{t,\epsilon}^2)] \,.
\end{align*}
Assuming $p_t = o(q_{t,\epsilon})$, we obtain \eqref{eq:cond1}.
\qed

\subsection{Proof of Theorem~\ref{th:sharp}}\label{sec:sharp}
We need to check that the \emph{Modified Tribes} function,
corresponding to $A_t=\big\{ \sum_{i=1}^t \omega_i = a_t\big \}$
with $a_t$ from \eqref{eq:at},
satisfies \eqref{eq:optimality_condition_bound}.

We recall that $p_t = \bbP(A_t)$ is given in \eqref{eq:pt}.
Note that the event $A_t$ means that the difference between the number of ``$+$'' and the number of ``$-$'' signs in $\omega_1,\dots ,\omega_t$ equals $a_t$.

Let us now fix $\epsilon \in (0,1)$ and estimate 
\begin{equation*}
	q_{t,\epsilon}= \mathbb{P}(A_t(\omega^{\varepsilon}) \, | \, A_t(\omega)) \, ,
\end{equation*}
which is the probability that the difference between the number of ``$+$'' and the number of ``$-$'' signs 
is still equal to $a_t$ after resampling.
Assuming that $\sum_{i=1}^t \omega_i = a_t$, set
\begin{equation}\label{eq:N+_N-}
N_t^{+}:=\frac{t+a_t}{2} \quad \text{number of ``$+$'',} \qquad N_t^{-}:=t-N_t^{+}= \frac{t-a_t}{2}\quad \text{number of ``$-$''}.
\end{equation}
Conditionally on $A_t(\omega)$, we can write $A_t(\omega^{\varepsilon})=\{B^+_\epsilon
-B^-_\epsilon=0\}$, 
where $B^+_\epsilon$ and $B^-_\epsilon$ are respectively the number of changed ``$+$'' and ``$-$'' 
signs after rerandomising:
\begin{equation*}
	B^{+}_\epsilon:= |\{i=1,\dots ,t : \omega_i=1, \omega_i^{\varepsilon}=-1\}|, \qquad 
	B^{-}_\epsilon:= |\{i=1,\dots ,t  : \omega_i=-1, \omega_i^{\varepsilon}=1\}|.
\end{equation*}
Under $\mathbb{P}(\cdot\, |\, A_t(\omega))$, $B^{+}_\epsilon$ and $B^{-}_\epsilon$ are independent binomial random variables, with respective laws $\mathrm{Bin}(N_t^{+}, \varepsilon/2)$ and 
$\mathrm{Bin}(N_t^{-}, \varepsilon/2)$ with $N_t^{+}, N_t^{-}$ as in \eqref{eq:N+_N-}. Therefore
\begin{align*}
	\mu_{t,\epsilon} 
	&:= \bbE[B^+_\epsilon - B^-_\epsilon | A_t(\omega)]
	= (N_t^+ - N_t^-) \tfrac{\epsilon}{2} = a_t \, \tfrac{\epsilon}{2} \,,\\
	\sigma_{t,\epsilon}^2 
	&:= \bbvar[B^+_\epsilon - B^-_\epsilon | A_t(\omega)]
	= (N_t^+ + N_t^-) \tfrac{\epsilon}{2} (1-\tfrac{\epsilon}{2})
	= t \, \tfrac{\epsilon}{2} (1-\tfrac{\epsilon}{2}) \,.
\end{align*}
Local Gaussian estimates for the simple random walk then give
\begin{equation}\label{eq:q_t}
	q_{t,\epsilon}=\mathbb{P}(B^+_\epsilon - B^-_\epsilon = 0)
	\sim \mathbb{P}\big(\mathcal{N}(\mu_{t,\epsilon} , \sigma_{t,\epsilon}^2) \in [0,1)\big)
	\sim \frac{\rme^{-\frac{\mu_{t,\epsilon}^2}{2\sigma_{t,\epsilon}^2}}}{\sqrt{2\pi\sigma_{t,\epsilon}^2}}
	=\frac{\rme^{-\frac{a_t^2}{2t}
	\frac{\varepsilon }{2-\varepsilon}}}{\sqrt{2\pi \, 
	t \, \frac{\varepsilon}{2}(1-\frac{\varepsilon}{2})}} \,.
\end{equation}
We observe,
recalling \eqref{eq:at} and \eqref{eq:pt}, that $p_t=o(q_{t,\epsilon})$ as $t\to \infty$
(since $\epsilon/(2-\epsilon) < 1$), so the assumption of Lemma~\ref{lem:cond1}
is satisfied. More precisely,
\begin{equation}\label{eq:q_t_and_p_t}
	q_{t,\epsilon} \sim \frac{(\sqrt{\frac{\pi}{2} \, t} \, p_t)^{\frac{\varepsilon}{2
	-\varepsilon}}}{\sqrt{\frac{\pi}{2} \, t\,\varepsilon(2-\epsilon)}}
	\sim c_\epsilon \, \frac{(t \, p_t)^{\frac{\epsilon}{2-\epsilon}}}{t^{\frac{1}{2-\epsilon}}}
	\qquad \text{with} \quad c_\epsilon \underset{\epsilon\downarrow 0}{\sim} 
	\frac{1}{\sqrt{2\pi \epsilon}} \,.
\end{equation}

Finally, we compute $r_t$ (see \eqref{eq:rt}):
\begin{align}\label{eq:influence_f_tilde}
r_t=\frac{\bbP(\sum_{i=1}^{t-1}\omega_i =a_t + 1)}{2}+ \frac{\bbP(\sum_{i=1}^{t-1}\omega_i =a_t - 1)}{2} \sim \bbP\left(\sum_{i=1}^{t-1}\omega_i =a_t\right) \sim  \, p_t.
\end{align}
The bound in \eqref{eq:optimality_condition_bound} follows by gathering \eqref{eq:q_t_and_p_t} and \eqref{eq:influence_f_tilde}, which shows the optimality of the exponent $\epsilon / (2-\epsilon)$.
More precisely, by \eqref{eq:var_tribes}, \eqref{eq:cond1} and \eqref{eq:W_asymp}, 
\begin{equation}\label{eq:SeV_and_W}
	\frac{\bbcov[f_t(\omega^{\varepsilon}),f_t(\omega)]}{\bbvar [f_t]}
	\sim \tfrac{\rme^{-1}}{1-\rme^{-1}} \, q_{t,\epsilon} \,, \qquad
	\frac{\mathcal{W}[f_t]}{\bbvar [f_t]} \sim \tfrac{\rme^{-1}}{1-\rme^{-1}}  \, t \, p_t \,,
\end{equation}
hence by \eqref{eq:q_t_and_p_t} we get, for a suitable $c'_\epsilon$,
\begin{equation*}
	\frac{\bbcov[f_t(\omega^{\varepsilon}),f_t(\omega)]}{\bbvar [f_t]}
	\sim \frac{c'_\epsilon}{t^{\frac{1}{2-\epsilon}}} \,
	\bigg( \frac{\mathcal{W}[f_t]}{\bbvar [f_t]} \bigg)^{\frac{\epsilon}{2-\epsilon}} \,.
\end{equation*}

At last, from the second relation in \eqref{eq:SeV_and_W} we obtain, recalling \eqref{eq:pt},
\begin{equation*}
	t \sim \bigg( 2 \, \log \frac{\bbvar [f_t]}{\mathcal{W}[f_t]}\bigg)^{\frac{1}{2\gamma}} \,,
\end{equation*}
which plugged into \eqref{eq:SeV_and_W} concludes the proof of \eqref{eq:sharp_bound}. \qed

\appendix

\section{Influences for Boolean functions and binary variables}

We recall the definition of $L^1$ and $L^2$ influences 
for a function $f(\omega) \in L^2$:
\begin{equation} \label{eq:inf12}
	\Inf_k^{(1)}[f] := \bbE \big[ | \delta_k f | \big] \,,
	\qquad \Inf_k^{(2)}[f] := \bbE \big[ (\delta_k f)^2 \big] \,.
\end{equation}
We first show that, for a Boolean function~$f$, these notions coincide up to a factor~$2$.

\begin{lemma}[Influences for Boolean functions] \label{lem:boole}
For a Boolean function $f(\omega) \in \{0,1\}$
\begin{equation}\label{eq:boole12}
	\Inf_k^{(1)}[f] = 2 \, \Inf_k^{(2)}[f] = 
	\bbP\big( f(\omega) \ne f(\omega^k_{\mathrm{ind}}) \big) 
\end{equation}
where $\omega^{k}_{\mathrm{ind}}$ denotes the family $\omega = (\omega_i)_{i\in\bbT}$
with $\omega_k$ replaced by an independent copy~$\omega'_k$.
\end{lemma}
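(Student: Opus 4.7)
The plan is to reduce everything to a conditional computation given $\cF_{\bbT\setminus\{k\}} = \sigma((\omega_j)_{j\ne k})$. Set $p_k := \bbE_k[f]$; since $f$ is Boolean, $p_k$ is just the conditional probability that $f(\omega)=1$ given the other coordinates, so $p_k \in [0,1]$ a.s.

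First I would unpack the two $L^p$ influences in \eqref{eq:inf12}. Conditionally on the other coordinates, $\delta_k f = f - p_k$ equals $1-p_k$ with probability $p_k$ and $-p_k$ with probability $1-p_k$. A one-line computation then yields
\begin{equation*}
	\bbE_k\big[|\delta_k f|\big] = p_k(1-p_k) + (1-p_k)p_k = 2\,p_k(1-p_k) \,,
\end{equation*}
\begin{equation*}
	\bbE_k\big[(\delta_k f)^2\big] = p_k(1-p_k)^2 + (1-p_k)p_k^2 = p_k(1-p_k) \,.
\end{equation*}
Taking expectations gives $\Inf_k^{(1)}[f] = 2\,\bbE[p_k(1-p_k)] = 2\,\Inf_k^{(2)}[f]$, which is the first equality in \eqref{eq:boole12}.

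For the second equality, I would use the definition of $\omega^k_{\mathrm{ind}}$: conditionally on $(\omega_j)_{j\ne k}$, the two random variables $f(\omega)$ and $f(\omega^k_{\mathrm{ind}})$ are independent Bernoulli$(p_k)$, since they depend on~$\omega_k$ and on the independent copy~$\omega_k'$ respectively, while everything else is frozen. Hence
\begin{equation*}
	\bbP\bigl( f(\omega) \ne f(\omega^k_{\mathrm{ind}}) \,\big|\, \cF_{\bbT\setminus\{k\}} \bigr)
	= 2\,p_k(1-p_k) \,,
\end{equation*}
and taking expectations gives $\bbP(f(\omega)\ne f(\omega^k_{\mathrm{ind}})) = 2\,\bbE[p_k(1-p_k)] = \Inf_k^{(1)}[f]$, completing the proof.

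There is no real obstacle here: the statement is an elementary consequence of the fact that Boolean random variables are characterised by their conditional mean, which makes the $L^1$ and $L^2$ norms of the centering $\delta_k f$ proportional. The only thing to keep straight is the distinction between the resampled configuration $\omega^k_{\mathrm{ind}}$ (which swaps in an independent copy of $\omega_k$) and the deterministic flips $\omega^k_\pm$ from the classical binary influence \eqref{eq:GS-influence}; the identity with the latter will then follow by a further short computation in the binary setting, as used for \eqref{eq:inf-quasi-gen}.
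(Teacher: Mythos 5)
Your proof is correct and follows essentially the same route as the paper: condition on $(\omega_j)_{j\ne k}$, use that $f$ is conditionally Bernoulli with mean $\bbE_k[f]$ to get $\Inf_k^{(1)}[f]=2\,\Inf_k^{(2)}[f]=2\,\bbE[\bbE_k[f](1-\bbE_k[f])]$, and identify $2\,p_k(1-p_k)$ with the conditional probability that two independent Bernoulli$(p_k)$ variables disagree. The paper writes this last step as $\bbE_k[f](1-\bbE_k[f])=\bbE_k[f(\omega)(1-f(\omega^k_{\mathrm{ind}}))]=\tfrac12\,\bbP_k(f(\omega)\ne f(\omega^k_{\mathrm{ind}}))$, which is the same computation in a slightly different guise.
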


\begin{proof}
For a Bernoulli variable $X$ with mean $q$
we have $\bbE[|X-q|] = 2\, q  (1-q) = 2 \bbvar[X]$.
Conditionally on $(\omega_j)_{j \ne k}$, the distribution of $f(\omega)$ is Bernoulli
with mean $\bbE_k[f]$, hence
\begin{equation*}
	\Inf_k^{(1)}[f] = 2 \, \Inf_k^{(2)}[f] = 2 \,
	\bbE \bigl[ \, \bbE_k[f] \, (1-\bbE_k[f]) \bigr] \,.
\end{equation*}
Using the modified family $\omega^k_{\mathrm{ind}}$, we can write
$\bbE_k[f] \, (1-\bbE_k[f]) = \bbE_k[f(\omega) \, (1-f(\omega^k_{\mathrm{ind}}))]$,
and recalling that $f \in \{0,1\}$ is Boolean we obtain
(with $\bbP_k(\cdot) := \bbE_k[\ind_{\{\cdot\}}]$)
\begin{equation*}
	\bbE_k[f] \, (1-\bbE_k[f]) = 
	\bbP_k( f(\omega) = 1, f(\omega^k_{\mathrm{ind}}) = 0)
	= \frac{1}{2} \, \bbP_k\big( f(\omega) \ne f(\omega^k_{\mathrm{ind}}) \big) \,.
\end{equation*}
This completes the proof of \eqref{eq:boole12}.
\end{proof}

\smallskip

We next compute the influences in the special case of binary $\omega_i$'s.

\begin{lemma}[Influences for binary variables] \label{lem:binary}
If $\omega_i \in \{x_-, x_+\}$ are binary variables, see \eqref{eq:binary0},
for any function $ f(\omega)$ we have
\begin{align}
	\label{eq:inf-binary1}
	\Inf_k^{(1)} [f] = 2 \, p(1-p) \, \bbE\big[|f(\omega^k_+) - f(\omega^k_-)| \big] \,, \\
	\label{eq:inf-binary2}
	\Inf_k^{(2)} [f] =  p(1-p) \, \bbE\big[ \big( f(\omega^k_+) - f(\omega^k_-) \big)^2 \big] \,,
\end{align}
where $\omega^k_\pm$ denotes the family $\omega = (\omega_i)$ in which we
fix $\omega_k = x_\pm$.
\end{lemma}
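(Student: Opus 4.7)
The plan is to compute $\bbE_k[f]$ explicitly in the binary setting, evaluate $\delta_k f$ on each of the two branches $\{\omega_k = x_+\}$ and $\{\omega_k = x_-\}$, and then integrate.

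First I would note that since $\omega_k \in \{x_-, x_+\}$ takes value $x_\pm$ with probability $p, 1-p$, and since $\bbE_k[\cdot]$ integrates out only $\omega_k$,
\begin{equation*}
	\bbE_k[f] \,=\, p \, f(\omega^k_+) + (1-p) \, f(\omega^k_-) \,,
\end{equation*}
where the right-hand side depends only on $(\omega_j)_{j \ne k}$. Setting the shorthand $D := f(\omega^k_+) - f(\omega^k_-)$, a direct computation then gives
\begin{equation*}
	\delta_k f = f - \bbE_k[f] =
	\begin{cases}
		(1-p)\, D & \text{on } \{\omega_k = x_+\} \,, \\
		-p \, D & \text{on } \{\omega_k = x_-\} \,.
	\end{cases}
\end{equation*}

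Since $D$ is independent of $\omega_k$ (it depends only on $(\omega_j)_{j\ne k}$), conditioning on $\omega_k$ and then averaging yields
\begin{equation*}
	\Inf_k^{(1)}[f] = \bbE[|\delta_k f|] = p \, (1-p) \, \bbE[|D|] + (1-p) \, p \, \bbE[|D|] = 2 \, p(1-p) \, \bbE[|D|],
\end{equation*}
which is precisely \eqref{eq:inf-binary1}. Likewise,
\begin{equation*}
	\Inf_k^{(2)}[f] = \bbE[(\delta_k f)^2] = p \, (1-p)^2 \, \bbE[D^2] + (1-p) \, p^2 \, \bbE[D^2] = p(1-p) \, \bbE[D^2],
\end{equation*}
proving \eqref{eq:inf-binary2}.

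There is no real obstacle: the whole argument is a one-step conditioning on $\omega_k$, made transparent by the explicit two-point structure of the law $\mu_i$ in \eqref{eq:binary0}. As a sanity check, specialising to a Boolean $f \in \{0,1\}$, one has $|D|^2 = |D|$ so the two identities differ by a factor of~$2$, consistent with Lemma~\ref{lem:boole}; and in the symmetric case $p = \tfrac{1}{2}$ one recovers $\Inf_k^{(1)}[f] = \tfrac{1}{2} \bbE[|D|]$ and $\Inf_k^{(2)}[f] = \tfrac{1}{4} \bbE[D^2]$, matching the classical Fourier–Walsh formulas.
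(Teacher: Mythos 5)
Your proof is correct and follows essentially the same route as the paper: compute $\bbE_k[f] = p\,f(\omega^k_+) + (1-p)\,f(\omega^k_-)$, read off $\delta_k f$ on the two branches $\{\omega_k = x_\pm\}$, and integrate using the independence of $f(\omega^k_+) - f(\omega^k_-)$ from $\omega_k$. The computation and the resulting formulas match the paper's argument exactly.
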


\begin{proof}
By \eqref{eq:binary0} we compute $\bbE_k[f] = p \, f(\omega^k_+) + (1-p) \, f(\omega^k_-)$, hence
\begin{equation}\label{eq:delta-binary}
	\delta_k f = f - \bbE_k[f] = \begin{cases}
	(1-p) \, \big\{ f(\omega^k_+) - f(\omega^k_-) \big\}
	& \text{if } \omega_k = x_+ \,, \\
	-p \, \big\{ f(\omega^k_+) - f(\omega^k_-) \big\}
	& \text{if } \omega_k = x_- \,,
	\end{cases}
\end{equation}
from which \eqref{eq:inf-binary1} and \eqref{eq:inf-binary2} readily follow.
\end{proof}

\begin{remark}\label{rem:Tal}
Relation \eqref{eq:delta-binary}
shows that, for binary $\omega_i$'s, our definition of
$\delta_k f = f - \bbE_k[f]$ coincides with $\Delta_k f$ from \cite{Tal}.
\end{remark}

Combining Lemmas~\ref{lem:boole} and~\ref{lem:binary},
we finally obtain the following result.

\begin{lemma}[Influences for Boolean functions of binary variables] \label{lem:binary1}
For a Boolean function $f(\omega) \in \{0,1\}$ of binary variables $\omega_i \in \{x_-, x_+\}$,
see \eqref{eq:binary0}, we have
\begin{equation} \label{eq:inf-boolean}
	\Inf_k^{(1)}[f] = 2 \, \Inf_k^{(2)}[f] 
	= 2 \, p(1-p) \, \bbP \big( f(\omega^k_+) \ne f(\omega^k_-) \big) \,.
\end{equation}
\end{lemma}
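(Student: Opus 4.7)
The plan is to simply combine the two preceding lemmas. Since the statement has two equalities, I would handle them in turn, both by direct substitution.

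For the first equality $\Inf_k^{(1)}[f] = 2\,\Inf_k^{(2)}[f]$, this is immediate from Lemma~\ref{lem:boole}, whose hypothesis (Boolean $f$) is exactly what is assumed here. No additional argument is needed.

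For the second equality, I would apply Lemma~\ref{lem:binary}, specifically \eqref{eq:inf-binary2}, which gives
\begin{equation*}
	\Inf_k^{(2)}[f] = p(1-p)\,\bbE\bigl[(f(\omega^k_+) - f(\omega^k_-))^2\bigr] .
\end{equation*}
The key observation is that, since $f$ is Boolean with values in $\{0,1\}$, the difference $f(\omega^k_+) - f(\omega^k_-)$ lies in $\{-1,0,1\}$, so its square equals the indicator $\ind_{\{f(\omega^k_+)\ne f(\omega^k_-)\}}$. Taking expectations turns the right-hand side into $p(1-p)\,\bbP(f(\omega^k_+)\ne f(\omega^k_-))$, and multiplying by~$2$ via the first equality gives the claimed value of $\Inf_k^{(1)}[f]$. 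Alternatively, one can invoke \eqref{eq:inf-binary1} and use that $|f(\omega^k_+)-f(\omega^k_-)|$ is already an indicator on $\{0,1\}$-valued functions, reaching the same conclusion in one line.

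There is essentially no obstacle here: both inputs \eqref{eq:inf-binary1}--\eqref{eq:inf-binary2} and \eqref{eq:boole12} have already been proved, and the only non-mechanical point is the trivial remark that for Boolean $f$ the quantities $|f(\omega^k_+)-f(\omega^k_-)|$ and $(f(\omega^k_+)-f(\omega^k_-))^2$ coincide with the indicator of disagreement, so the two preceding lemmas immediately assemble into \eqref{eq:inf-boolean}. The entire proof should fit in three or four lines.
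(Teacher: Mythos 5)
Your proof is correct. The only difference from the paper is in how the second equality is obtained: the paper's written proof stays entirely within Lemma~\ref{lem:boole}, using the third expression in \eqref{eq:boole12}, namely $\bbP\big(f(\omega)\ne f(\omega^k_{\mathrm{ind}})\big)$, and then factorizes the disagreement event as $\{f(\omega^k_+)\ne f(\omega^k_-)\}\cap\{\omega_k\ne\omega'_k\}$, using independence and $\bbP(\omega_k\ne\omega'_k)=2p(1-p)$. You instead take the second equality from Lemma~\ref{lem:binary}, observing that for Boolean $f$ the quantity $(f(\omega^k_+)-f(\omega^k_-))^2$ (equivalently $|f(\omega^k_+)-f(\omega^k_-)|$) is the indicator of disagreement. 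Both routes are three-line arguments and both are rigorous; yours is in fact the one the paper's own lead-in sentence (``Combining Lemmas~\ref{lem:boole} and~\ref{lem:binary}\dots'') advertises, while the paper's proof body ends up not needing Lemma~\ref{lem:binary} at all. Nothing is missing from your argument.
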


\begin{proof}
We apply \eqref{eq:boole12} and note that
\begin{equation*}
	\{f(\omega) \ne f(\omega^k_{\mathrm{ind}})\}
	= \{	f(\omega^k_+) \ne f(\omega^k_-) \} \cap
	\{\omega_k \ne \omega'_k\} \,,
\end{equation*}
where we recall that $\omega'_k$ denotes an independent copy of $\omega_k$.
Since $\bbP(\omega_k \ne \omega'_k) = 2p(1-p)$ and
the event $\{ f(\omega^k_+) \ne f(\omega^k_-) \}$ is independent of
$\omega_k, \omega'_k$, the proof is completed.
\end{proof}

\section{Orthogonal decomposition}
\label{app:ES}

We first prove the Efron-Stein decomposition in
Proposition~\ref{th:genchaos}, which
shows that any function $f(\omega) \in L^2$ admits the chaos
expansion \eqref{eq:f-chaos} with $f_I$ given by \eqref{eq:fIdef}.

\begin{proof}[Proof of Proposition~\ref{th:genchaos}]
The necessity of \eqref{eq:fIdef} is easy: if we
assume that \eqref{eq:f-chaos} holds for some functions $f_I$'s satisfying
\eqref{eq:fI}, we already observed that the properties in \eqref{eq:deltakEJf} must hold,
from which we can directly deduce \eqref{eq:fIdef}.

It remains to show that \eqref{eq:fIdef} holds with $f_I$ given by \eqref{eq:fIdef}.
We first assume that $\bbT$ is finite, say $\bbT = \{1,\ldots, n\}$.
Since $\bbE_i + \delta_i$ is the identity operator, see \eqref{eq:deltak}, we can write
\begin{equation*}
	f = (\bbE_{1} + \delta_{1}) \cdots (\bbE_{n} + \delta_{n}) \, f
	= \bbE[f] + \sum_{d=1}^n \sum_{I \subseteq \bbT:\, |I| = d} f_I \, ,
\end{equation*}
where we expanded the product of \emph{commuting} operators $\delta_i$ and $\bbE_j$
(by Fubini's theorem) and we denoted by
$f_I$ the function obtained from~$f$ applying $\delta_i$ for $i \in I$ and $\bbE_j$ for $j \in I^c$.
If we write $I = \{i_1, \ldots, i_d\}$ and, for convenience, $I^c = \{j_1, \ldots, j_{n-d}\}$,
we obtain \eqref{eq:fIdef}:
\begin{equation*}
	f_I \,=\, \delta_{i_1} \cdots\, \delta_{i_d} \,\,  \tilde{f}_I \qquad \text{with} \qquad
	\tilde{f}_I \,:=\, \bbE_{j_1} \cdots \, \bbE_{j_{n-d}} \, f
	\,=\, \bbE[ \, f  \, | \, \cF_I] \,.
\end{equation*}

We next consider the case $|\bbT| = \infty$.
We write $\bbT = \bigcup_{n\in\N} \bbT_n$ for increasing sets $\bbT_n \subseteq \bbT_{n+1}$
with $|\bbT_n| = n$
and we define
\begin{equation} \label{eq:fn}
	f_n := \bbE[ \,f\, |\cF_n] \qquad \text{where} 
	\quad \cF_n := \sigma(\omega_i \colon i \in \bbT_n) \,.
\end{equation}
Since $|\bbT_n|<\infty$, we already know that $f_n$ admits the decomposition \eqref{eq:f-chaos}.
Crucially $(f_n)_I = f_I$ for $I \subseteq \bbT_n$ because
$\bbE[ f_n \,|\, \cF_{I}] =
\bbE[\bbE[f|\cF_n] \,|\, \cF_{I}] = \bbE[f\,|\,\cF_I]$, see \eqref{eq:fIdef}, hence
\begin{equation}\label{eq:f-chaos-n}
	f_n =\, \bbE[f] + \sum_{d = 1}^\infty \,
	\sum_{I \subseteq \bbT_n :\, |I| = d} f_I \, .
\end{equation}
Note that $f_n = \bbE[\,f\,|\cF_n]$ is a martingale bounded in $L^2$, 
hence $f_n \to \bbE[\,f\,|\cF_\infty] = f$ in $L^2$
because $\cF_\infty = \sigma(\bigcup_{n\in\N}\cF_n) = \sigma(\omega = (\omega_i)_{i\in\bbT})$.
Letting $n\to\infty$ in \eqref{eq:f-chaos-n}, we obtain \eqref{eq:f-chaos}.
\end{proof}

We deduce that any function $f(\omega) \in L^2$ of binary variables $\omega_i$'s
is a polynomial chaos.

\begin{lemma}[Completeness for binary $\omega_i$'s]\label{lem:completeness}
Let the $\omega_i$ be independent and centred real random variables which take two values.
Then every function $f(\omega) \in L^2$ is a polynomial chaos,
i.e.\ it is of the form \eqref{eq:fpoly}.
\end{lemma}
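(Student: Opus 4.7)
The plan is to combine the Efron--Stein decomposition of Proposition~\ref{th:genchaos} with a simple one-variable observation that is special to centred binary variables. Proposition~\ref{th:genchaos} writes $f = \bbE[f] + \sum_{d \ge 1} \sum_{|I| = d} f_I$ with $f_I \in L^2(\cF_I)$ satisfying $\bbE_k[f_I] = 0$ for every $k \in I$, the sum converging in $L^2$. To obtain the polynomial chaos form \eqref{eq:fpoly}, it will suffice to show that each such $f_I$ equals $c_I \prod_{k \in I} \omega_k$ for a constant $c_I \in \R$; one then sets $\hat{f}(i_1, \dots, i_d) := c_{\{i_1, \dots, i_d\}}$, and the square-summability of the coefficients is automatic from \eqref{eq:varianza}, since $\|f_I\|_2^2 = c_I^2 \prod_{k \in I} \bbE[\omega_k^2]$.

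The crucial one-variable statement I will establish is: if $\omega$ is centred and supported on two distinct values $a, b$, then every $g \in L^2(E, \mu)$ with $\bbE[g(\omega)] = 0$ is of the form $g(\omega) = c \, \omega$ for some $c \in \R$. Indeed, centring of $\omega$ rules out the possibility that either value is zero (a zero value would force the other to vanish too, contradicting $a \ne b$), so both $a$ and $b$ are nonzero; then the two identities $p \, a + (1 - p) \, b = 0$ and $p \, g(a) + (1 - p) \, g(b) = 0$ force $g(a)/a = g(b)/b$, and $c$ is this common ratio.

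To upgrade this to $f_I = c_I \prod_{k \in I} \omega_k$, I would use a peeling argument. Pick any $k_1 \in I$ and apply the one-variable fact to $\omega_{k_1} \mapsto f_I(\omega)$ with $(\omega_j)_{j \ne k_1}$ held fixed: since $\bbE_{k_1}[f_I] = 0$, this yields $f_I(\omega) = A_1(\omega_{I \setminus \{k_1\}}) \cdot \omega_{k_1}$ for some measurable $A_1 \in L^2(\cF_{I \setminus \{k_1\}})$. For any other $j \in I$, independence gives $\bbE_j[f_I] = \omega_{k_1} \, \bbE_j[A_1]$, and because $\omega_{k_1}$ never vanishes, the hypothesis $\bbE_j[f_I] = 0$ propagates to $\bbE_j[A_1] = 0$. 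Iterating this step $|I|$ times extracts one factor $\omega_k$ per step and leaves behind a constant $c_I \in \R$, as required.

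The only mildly subtle point is the non-vanishing of the two values of each $\omega_i$, which is what allows us both to divide through and to propagate the zero-mean property across iterations; this is immediate from centring together with the binary support assumption, so I expect no real obstacle.
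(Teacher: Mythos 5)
Your proof is correct, but it follows a genuinely different route from the paper's. The paper argues directly: for finite $\bbT$ with $|\bbT|=n$, the functions on $\{x_-,x_+\}^n$ form a $2^n$-dimensional space, and the $2^n$ monomials $\omega_I$ are orthogonal (hence linearly independent), so they are a basis; the infinite case is then handled by conditioning on $\cF_{\bbT_n}$ and passing to the limit via $L^2$ martingale convergence, exactly as in the proof of Proposition~\ref{th:genchaos}. You instead take Proposition~\ref{th:genchaos} as the starting point and identify each Efron--Stein component $f_I$ explicitly as $c_I\prod_{k\in I}\omega_k$, via the one-variable observation that on a two-point support the centred functions form a one-dimensional space spanned by $x\mapsto x$ (your ratio computation $g(a)/a=g(b)/b$ is a correct, if slightly roundabout, way of saying this), followed by a peeling induction; the non-vanishing of the two values, which you correctly derive from centring, is indeed the point that makes the division and the propagation of $\bbE_j[A_1]=0$ legitimate. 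What your approach buys is that the passage to infinite $\bbT$ and the $L^2$ convergence come for free from Proposition~\ref{th:genchaos}, so no separate martingale argument is needed; what the paper's approach buys is independence from the Efron--Stein machinery, resting only on elementary dimension counting in the finite case. One small caveat: your closing remark that square-summability of the $\hat f(I)$ follows from \eqref{eq:varianza} uses $\|f_I\|_2^2=c_I^2\prod_{k\in I}\bbE[\omega_k^2]$, which gives $\sum_I \hat f(I)^2<\infty$ only when the variances $\bbE[\omega_k^2]$ are bounded away from $0$ (as in the setting \eqref{eq:binary0} with fixed $p$ and $x_\pm$); in any case the $L^2$ convergence of the series, which is what \eqref{eq:fpoly} really requires, is already guaranteed by Proposition~\ref{th:genchaos}.
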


\begin{proof}
When $|\bbT| = n < \infty$, functions $f(\omega)$ with binary $\omega_i$'s
can be identified with functions $f: \{x_-, x_+\}^n \to \R$,
which form a vector space of dimension $2^n$.
The monomials $(\omega_I)_{I\subseteq \bbT}$ are precisely $2^n$ (orthogonal, hence)
linearly independent functions, which are then a basis.
It follows that any function $f(\omega)$ is a (finite) sum
$\sum_{I} \hat{f}(I) \, \omega_I$, i.e.\ a polynomial chaos.

When $|\bbT| = \infty$ we argue as in the proof of Proposition~\ref{th:genchaos}:
we write $\bbT = \bigcup_{n\in\N} \bbT_n$
for finite increasing sets $\bbT_n$ and define $f_n$ by \eqref{eq:fn}.
We already know that $f_n(\omega) = \sum_{I \subseteq \bbT_n} \hat{f_n}(I) \, \omega_I$ 
is a polynomial chaos with coefficients $\hat{f_n}(I) = \hat{f}(I)$ 
independent of~$n$, 
since $\langle f_n, \omega_I\rangle = \langle f, \omega_I\rangle$
for $I \subseteq \bbT_n$.
Then \eqref{eq:fpoly} holds because $f_n \to f$ in $L^2$ by martingale convergence.
\end{proof}

\section{Assumption~\ref{hyp:gen2}, ensembles, and hypercontractivity}
\label{app:hyper}

In this section, we discuss Assumption~\ref{hyp:gen2}, connecting it
to the notion of \emph{ensembles} from \cite{MOO}.
We then prove Theorem~\ref{th:hyper}, deducing it from results in \cite{MOO}.

\subsection{Rephrasing Assymption~\ref{hyp:gen2}}

Let us look more closely at our key Assumption~\ref{hyp:gen2}. We start
from part~\eqref{it:a}, discussing property \eqref{eq:belongs}
relative to the vector spaces $\cV_i \subseteq L^2(E_i, \mu_i)$,
which are by assumption separable Hilbert spaces with $1 \in \cV_i$.

Let us fix an \emph{orthonormal basis} of $\cV_i$ starting with the constant function~$1$:
\begin{equation} \label{eq:basis}
	h^{(0)}_i := 1\,, \ h^{(1)}_i\,, \ h^{(2)}_i\,, \ \ldots
	\ \in \cV_i \subseteq L^2(E_i,\mu_i) \colon
	\qquad
	\langle h^{(\ell)}_i , \, h^{(\ell')}_{i} \rangle_{E_i} = \delta_{\ell,\ell'} \,.
\end{equation}
If $\cV_i$ has finite dimension $M_i < \infty$
the sequence is finite: $h^{(\ell)}_i$ for $\ell = 0, 1, \ldots, M_i-1$
(otherwise it is an infinite sequence). 
We can then rephrase condition \eqref{eq:belongs} as follows:
for any given index $i\in\bbT$ we can write, for a.e.\ $(\omega_j)_{j\ne i}$,
\begin{equation}\label{eq:belongs2}
	f(\omega) = \! \sum_{\ell = 0, 1, \ldots} \!\! \alpha_\ell \, h_i^{(\ell)}(\omega_i) \qquad
	\text{for coefficients} \ \, \alpha_\ell = \alpha_\ell\bigl( (\omega_j)_{j\ne i} \bigr) \,.
\end{equation}
This means that $f(\omega)$, as a function of~$\omega_i$, 
is a linear combination of the functions $h_i^{(\ell)}$'s.

\smallskip

We next discuss Assumption~\eqref{hyp:gen2}~\eqref{it:b}, focusing on the
bound \eqref{eq:Mq}. The next simple result covers
many cases, including Examples~\ref{ex:fin-supp} and~\ref{ex:poly-chaos} (and beyond). 

\begin{lemma}[Finite-dimensional vector space]\label{th:finite-gen}
Let the random variables $(\omega_i)_{i\in\bbT}$ be i.i.d.\
with law $\mu$ on $(E,\cE)$.
Fix $q > 2$ and a \emph{finite dimensional} vector space
$\cV \subseteq L^q(E,\mu)$.

Any function $f(\omega) \in L^2$ which satisfies 
Assumption~\ref{hyp:gen2}~\eqref{it:a} with $\cV_i = \cV$
also satisfies Assumption~\ref{hyp:gen2}~\eqref{it:b}, for a suitable $M_q  < \infty$.
\end{lemma}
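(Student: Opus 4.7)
The plan is to invoke the classical fact that on a finite-dimensional normed vector space all norms are equivalent. Since $\mu$ is a probability measure on $E$ and $q > 2$, Jensen's inequality yields the continuous inclusion $L^q(E,\mu) \subseteq L^2(E,\mu)$ with $\|g\|_2 \le \|g\|_q$. In particular $\cV \subseteq L^q(E,\mu) \subseteq L^2(E,\mu)$, and both $\|\cdot\|_q$ and $\|\cdot\|_2$ are well-defined on $\cV$.

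First I would verify that $\|\cdot\|_2$ is a bona fide norm on $\cV$, not merely a seminorm: this is the only subtle point. Since elements of $\cV \subseteq L^q(E,\mu)$ are equivalence classes of functions modulo $\mu$-null sets, any $g \in \cV$ with $\|g\|_2 = 0$ vanishes $\mu$-a.e., and therefore is the zero element of $\cV$. So $\|\cdot\|_2$ and $\|\cdot\|_q$ are both genuine norms on the finite-dimensional space $\cV$.

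By the equivalence of norms on finite-dimensional normed spaces, there exists a constant $M_q = M_q(\cV, \mu) < \infty$ such that
\begin{equation*}
	\|g\|_q \le M_q \, \|g\|_2 \qquad \text{for every } g \in \cV \,.
\end{equation*}
This bound trivially descends to the linear subspace of centred functions $\{g \in \cV \colon \int g \, \dd\mu = 0\}$, yielding exactly \eqref{eq:Mq}. Since the $\omega_i$'s are i.i.d.\ with common law $\mu$ and $\cV_i = \cV$ for every $i \in \bbT$, the \emph{same} constant $M_q$ works uniformly over $i\in\bbT$, so Assumption~\ref{hyp:gen2}~\eqref{it:b} holds. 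There is essentially no difficulty beyond the (standard) check that $\|\cdot\|_2$ is not degenerate on $\cV$, which is handled cleanly by the inclusion $\cV \subseteq L^q$.
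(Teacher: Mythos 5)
Your proof is correct. The one genuinely delicate point --- that $\|\cdot\|_2$ is a norm rather than a mere seminorm on $\cV$, so that the finite-dimensional norm-equivalence theorem applies --- is exactly the right thing to check, and your argument for it (elements of $L^q(E,\mu)$ are equivalence classes modulo $\mu$-null sets) is sound. The route differs slightly from the paper's: the paper fixes an orthonormal basis $h^{(0)}=1, h^{(1)},\dots,h^{(k)}$ of $\cV$ in $L^2(E,\mu)$, expands a centred $g=\sum_{\ell=1}^k\alpha_\ell h^{(\ell)}$, and bounds $\|g\|_q$ by the triangle inequality and Cauchy--Schwarz, which yields the explicit constant $M_q=\bigl(\sum_{\ell=1}^k\|h^{(\ell)}(\omega_i)\|_q^2\bigr)^{1/2}$. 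Your appeal to the equivalence of all norms on a finite-dimensional space is shorter and avoids choosing a basis, at the cost of producing a non-constructive $M_q$; the paper's computation is really just the constructive instance of the same finite-dimensionality principle. Either version suffices for the lemma, since only finiteness of $M_q$ (uniform in $i$, which you correctly note follows from the i.i.d.\ hypothesis) is needed downstream.
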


\begin{proof}
We need to show that condition \eqref{eq:Mq} holds for some $M_q < \infty$.
Fix an orthonormal basis $h^{(0)}, h^{(1)}, \ldots, h^{(k)}$ of $\cV$
with $h^{(0)} = 1$. Any $g \in \cV$ with $\bbE[g(\omega_i)] = \langle g, h^{(0)} \rangle = 0$
can be written as $g(\cdot) = \sum_{\ell=1}^k \alpha_\ell \, h^{(\ell)}(\cdot)$
with $\alpha_\ell \in \R$. By the triangle inequality and Cauchy-Schwarz
\begin{equation*}
	\| g(\omega_i) \|_q  \le \sum_{\ell=1}^k |\alpha_\ell| \, \|h^{(\ell)}(\omega_i)\|_q
	\le M_q \, \bigg(\sum_{\ell=1}^k \alpha_\ell^2 \bigg)^{\frac{1}{2}} \qquad \text{with} \ \
	M_q := \bigg(\sum_{\ell=1}^k \|h^{(\ell)}(\omega_i)\|_q^2 \bigg)^{\frac{1}{2}} < \infty \,.
\end{equation*}
Since $\|g(\omega_i)\|_2^2 = \sum_{\ell=1}^k \alpha_\ell^2$, we have shown that \eqref{eq:Mq} holds.
\end{proof}

\begin{remark}\label{rem:appl-fin}
Lemma~\ref{th:finite-gen} covers Example~\ref{ex:fin-supp}, 
because for $\mu$ with finite support
the space $\cV = L^2(E,\mu)$
is finite dimensional (and it coincides with $L^q(E,\mu)$ for any~$q$).

It also includes Example~\ref{ex:poly-chaos}, 
since the space of linear functions
$\cV = \{\alpha + \beta x \colon \alpha, \beta \in \R\}$ has dimension~$2$ and
$\cV \subseteq L^q(\R, \mu)$
under assumption \eqref{eq:omega}.
\end{remark}

\begin{remark}\label{rem:appl-fin2}
We can invoke Lemma~\ref{th:finite-gen} also for Remark~\ref{rem:high},
namely when $f(\omega) \in L^2$ depends on any given $\omega_i$ as
a polynomial of \emph{uniformly bounded degree}, say at most~$h$
(for $h=1$ we recover polynomial chaos from Example~\ref{ex:poly-chaos}).

Indeed,
Assumption~\ref{hyp:gen2}~\eqref{it:a} is satisfied with
$\cV_i = \cV = \{\alpha_0 + \alpha_1 x + \ldots + \alpha_h x^h \colon \alpha_i \in \R\}$ which
has finite dimension $h+1$. In order to have $\cV \subseteq L^q(\R,\mu)$ for some
given~$q > 2$, we only need to require that the $\omega_i$'s have uniformly
bounded moments of order~$qh$.
\end{remark}

\subsection{Ensembles and multi-linear polynomials}
\label{sec:ensembles}

Recall the orthonormal basis \eqref{eq:basis} in the space $\cV_i$.
Defining the families of random variables
\begin{equation}\label{eq:Xi}
	\mathscr{X}_i := \big\{
	X_{i,0} := 1 \,,\  X_{i,1} \,, \ X_{i,2}  \,, \ \ldots \,\big\} \subseteq
	L^2(\Omega) \colon \qquad
	X_{i,\ell} := h_i^{(\ell)}(\omega_i) \,,
\end{equation}
we obtain a so-called \emph{sequence of ensembles}
$(\mathscr{X}_i)_{i\in\bbT}$ according to \cite[Definition~3.1]{MOO}:
\begin{itemize}
\item random variables $X_{i,\ell}$ within each family $\mathscr{X}_i$ are \emph{orthonormal};
\item different families $\mathscr{X}_i$'s are \emph{independent}.
\end{itemize}

Let us call \emph{multi-index} any sequence
$\bell = (\ell_i)_{i\in\bbT}$ with $\ell_i \in \{0,1,\ldots\}$ such that
\emph{$\ell_i \ne 0$ only for finitely many $i\in\bbT$}
(if $\cV_i$ has finite dimension $M_i < \infty$ we also require $\ell_i < M_i$).
For each multi-index $\bell$ we define a \emph{multi-linear monomial $X_{\bell}$ in the
ensembles $(\mathscr{X}_i)_{i\in\bbT}$}:
\begin{equation}\label{eq:Xell}
	X_{\bell} := \prod_{i\in\bbT} X_{i,\ell_i}
	= \prod_{i\in\bbT} h_i^{(\ell_i)}(\omega_i) \,,
\end{equation}
(the product is finite since $X_{i,\ell_i} = 1$ when $\ell_i = 0$).
These random variables
are orthonormal:
\begin{equation*}
	\langle X_{\bell}, X_{\bell'} \rangle = \delta_{\bell,\bell'}
	\qquad \text{for all multi-indexes $\bell,\bell'$.}
\end{equation*}

\begin{definition}[\cite{MOO}]
We call \emph{multi-linear polynomial in the ensembles
$(\mathscr{X}_i)_{i\in\bbT}$} any random variable $f(\omega)$ in the linear
subspace of $L^2(\Omega)$ generated by the $X_{\bell}$'s:
\begin{equation}\label{eq:chaos-gen}
	f(\omega) = \sum_{\bell} \hat{f}(\bell) \, X_{\bell} \qquad
	\text{for real coefficients $\big( \hat{f}(\bell) \big)_{\bell}$ with }
	\sum_{\bell} \hat{f}(\bell)^2 < \infty 
\end{equation}
where the series converges in $L^2$ (hence we must have
$\hat{f}(\bell) = \langle f(\omega), X_{\bell}\rangle$).
\end{definition}

We are ready to give an equivalent characterisation of Assumption~\ref{hyp:gen2}~\eqref{it:a}.

\begin{lemma}\label{th:tensor}
A function $f(\omega) \in L^2$ satisfies Assumption~\ref{hyp:gen2}~\eqref{it:a}
if and only if it is a multi-linear
polynomial in the ensembles $(\mathscr{X}_i)_{i\in\bbT}$, i.e.\
\eqref{eq:chaos-gen} holds.
In this case, the functions $f_I(\omega)$ appearing in 
the chaos decomposition \eqref{eq:f-chaos} of~$f(\omega)$ are
\begin{equation} \label{eq:fIalt}
	f_I(\omega) = \sum_{\bell \colon
	\substack{\ell_i \ge 1 \, \forall i \in I \\ \ell_j = 0 \, \forall j \not\in I}} \hat{f}(\bell) \, X_{\bell}
	\qquad \text{with} \quad \hat{f}(\bell) = \langle f(\omega), X_{\bell} \rangle \,.
\end{equation}
\end{lemma}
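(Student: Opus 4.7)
The plan is to establish the characterization by proving both implications and then verify the explicit formula \eqref{eq:fIalt} using the uniqueness part of Proposition~\ref{th:genchaos}.

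The easy direction is ``multi-linear polynomial $\Rightarrow$ Assumption~\ref{hyp:gen2}\eqref{it:a}''. Given $f = \sum_{\bell} \hat f(\bell) X_{\bell}$, I would fix $i\in\bbT$ and regroup the sum according to the value of $\ell_i$, obtaining
\[
	f(\omega) = \sum_{\ell \ge 0} \alpha_\ell\bigl((\omega_j)_{j\ne i}\bigr) \, h_i^{(\ell)}(\omega_i), \quad \alpha_\ell := \!\!\sum_{\bell \colon \ell_i = \ell}\!\! \hat f(\bell) \prod_{j\ne i} h_j^{(\ell_j)}(\omega_j),
\]
so for a.e.\ $(\omega_j)_{j\ne i}$ the map $\omega_i \mapsto f(\omega)$ lies in the closed span of $(h_i^{(\ell)})_{\ell}$, namely $\cV_i$.

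For the converse direction I would first treat \emph{finite} $\bbT = \{1,\ldots,n\}$ by iterative expansion. Assuming \eqref{eq:belongs2} in variable $\omega_1$, expand
\[
	f(\omega) = \sum_{\ell_1} \alpha_{\ell_1}\bigl((\omega_j)_{j\ge 2}\bigr)\, h_1^{(\ell_1)}(\omega_1), \qquad \alpha_{\ell_1} = \bbE\bigl[f \cdot h_1^{(\ell_1)}(\omega_1) \,\big|\, \cF_{\{2,\dots,n\}}\bigr].
\]
The key verification is that each coefficient $\alpha_{\ell_1}$ still satisfies Assumption~\ref{hyp:gen2}\eqref{it:a} with respect to $\omega_2,\dots,\omega_n$: fixing $(\omega_k)_{k\ne 1,i}$ for $i\ge 2$ and expanding $f$ as a function of $\omega_i$ in $\cV_i$, one interchanges sum and integral (Fubini) to see that $\omega_i \mapsto \alpha_{\ell_1}$ is a linear combination of $(h_i^{(\ell_i)}(\omega_i))_{\ell_i}$, hence lies in $\cV_i$. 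Iterating this expansion over $\omega_2,\ldots,\omega_n$ produces the multi-linear polynomial representation \eqref{eq:chaos-gen} with $\hat f(\bell) = \langle f, X_{\bell}\rangle$. For \emph{infinite} $\bbT$, write $\bbT = \bigcup_{n\in\N} \bbT_n$ with $|\bbT_n|<\infty$ increasing, and set $f_n := \bbE[f|\cF_{\bbT_n}]$. Since integrating out variables indexed by $\bbT \setminus \bbT_n$ preserves the property of being $\cV_i$-valued (as $\cV_i$ is closed under averaging), $f_n$ satisfies Assumption~\ref{hyp:gen2}\eqref{it:a} on the finite set $\bbT_n$, so by the finite case $f_n = \sum_{\bell : \mathrm{supp}(\bell) \subseteq \bbT_n} \hat f(\bell) X_{\bell}$; here $\hat f_n(\bell)=\langle f,X_{\bell}\rangle=\hat f(\bell)$ since $X_{\bell}$ is $\cF_{\bbT_n}$-measurable. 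The $L^2$-martingale convergence $f_n \to f$ then yields \eqref{eq:chaos-gen}.

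Finally, for the formula \eqref{eq:fIalt}, I would define $\tilde f_I$ as the RHS and check that the collection $(\tilde f_I)_{I\subseteq \bbT}$ satisfies the defining properties \eqref{eq:fI} of the Efron--Stein decomposition: (i) in each term $X_{\bell}$ with $\ell_j = 0$ for $j\notin I$, one has $h_j^{(0)} \equiv 1$, so $\tilde f_I$ is a function of $(\omega_i)_{i\in I}$ only; (ii) for $k\in I$ every $\ell_k\ge 1$ gives $\bbE[h_k^{(\ell_k)}(\omega_k)]=\langle h_k^{(\ell_k)},h_k^{(0)}\rangle=0$ by orthonormality, so $\bbE_k[\tilde f_I]=0$. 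Moreover, partitioning the sum \eqref{eq:chaos-gen} according to the support $I := \{i : \ell_i \ge 1\}$ of $\bell$ gives $\sum_{I\subseteq\bbT} \tilde f_I = f$. By the uniqueness statement in Proposition~\ref{th:genchaos}, $\tilde f_I = f_I$ for every $I$.

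The main obstacle is the inductive step in the finite-$\bbT$ case: verifying that the coefficient $\alpha_{\ell_1}$ of the partial expansion inherits Assumption~\ref{hyp:gen2}\eqref{it:a}. This is where one must invoke the closedness of $\cV_i$ (so that weighted integrals of $\cV_i$-valued functions remain in $\cV_i$) together with Fubini to justify interchanging the integration over $\omega_1$ with the basis expansion in $\omega_i$. Everything else is either a direct algebraic manipulation (converse direction, formula \eqref{eq:fIalt}) or standard martingale-convergence (passage to infinite $\bbT$).
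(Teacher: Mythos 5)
Your proposal is correct and follows essentially the same route as the paper: successive orthonormal-basis expansions in each variable for finite $\bbT$ (with the same Fubini/closedness argument to justify that coefficients of partial expansions remain $\cV_i$-valued), followed by $f_n = \bbE[f\,|\,\cF_{\bbT_n}]$ and $L^2$-martingale convergence for infinite $\bbT$. Your explicit verification of \eqref{eq:fIalt} via the uniqueness clause of Proposition~\ref{th:genchaos} (checking the two properties in \eqref{eq:fI} for the grouped sums) is a welcome addition, as the paper leaves that step implicit.
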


 Before giving the proof, let us make two observations.

\begin{remark}[Ensembles and polynomial chaos]
For real valued $\omega_i$'s
with $\bbE[\omega_i] = 0$ and $\bbE[\omega_i^2] = 1$, if we take
the space of linear functions $\cV_i = \{ x \mapsto \alpha + \beta x \colon
\alpha,\beta \in \R\}$
with canonical basis
$h_i^{(0)} = 1$ and $h_i^{(1)}(x) = x$, we obtain the ensembles
$\mathscr{X}_i = \{1, \omega_i\}$. In this setting, 
multi-indexes $\bell$ have components $\ell_i \in \{0,1\}$ and
$X_{\bell} = \omega_{i_1} \cdots \omega_{i_d}$ is 
nothing but the multi-linear monomial in \eqref{eq:fpoly}
with $\{i_1, \ldots, i_d\} = \{i\in\bbT \colon \ \ell_i = 1\}$.
Definition \eqref{eq:chaos-gen} then gives the polynomial
chaos in \eqref{eq:fpoly}.
\end{remark}

\begin{remark}[Ensembles and chaos decomposition]\label{rem:ViL2}
If we take $\cV_i = L^2(E_i,\mu_i)$ --- assumed to be separable ---
Assumption~\ref{hyp:gen2}~\eqref{it:a} is satisfied by \emph{every $f(\omega) \in L^2$}
(by Fubini's theorem). Then it follows by Lemma~\ref{th:tensor}
that \emph{every $f(\omega) \in L^2$ is a multi-linear
polynomial}, 
i.e.\ multi-linear monomials $X_{\bell}$
are a basis of $L^2(\Omega,\sigma((\omega_i)_{i \in \bbT}), \bbP)$.
Formula \eqref{eq:fIalt} then provides a hands-on construction of the chaos decomposition
in Proposition~\ref{th:genchaos}.
\end{remark}

\begin{proof}[Proof of Lemma~\ref{th:tensor}]
It is clear that any multi-linear polynomial \eqref{eq:chaos-gen} 
in the ensembles $(\mathscr{X}_i)_{i\in\bbT}$ satisfies 
Assumption~\ref{hyp:gen2}~\eqref{it:a}, since by construction $X_{i,\ell} = h_i^{(\ell)}(\omega_i)$
with $h_i^{(\ell)} \in \cV_i$.

We now prove that any $f(\omega) \in L^2$ satisfying Assumption~\ref{hyp:gen2}~\eqref{it:a}
is of the form \eqref{eq:chaos-gen}.
We fix an index $i\in\bbT$ and note
that, by \eqref{eq:belongs}, $\omega_i \mapsto f(\omega)$ belongs to $\cV_i$
conditionally on $(\omega_j)_{j\ne i}$.
Since $h^{(0)}_i, h^{(1)}_i, \ldots$ is an orthonormal basis
of $\cV_i$, we write $f(\omega)$ as the $L^2$ convergent series
\begin{equation} \label{eq:fseries}
	f(\omega) = \sum_{\ell_i=0,1,\ldots} 
	\langle f(\omega) , \, h^{(\ell_i)}_i(\omega_i) \rangle_{E_i} \, 
	h_i^{(\ell_i)}(\omega_i) =
	\sum_{\ell_i=0,1,\ldots} \bbE_i\big[ f(\omega) \, X_{i,\ell_i} \big] \, 
	X_{i,\ell_i} \,. 
\end{equation}
If $i' \in \bbT$ is another index, using again Assumption~\ref{hyp:gen2}~\eqref{it:a}
and recalling \eqref{eq:Xell}, we obtain
\begin{equation*}
\begin{split}
	f(\omega) 
	&= \sum_{\ell_{i'}=0,1,\ldots} \, \sum_{\ell_i =0,1,\ldots} \bbE_{i'} \big[
	\bbE_i\big[ f(\omega) \, X_{i,\ell_i} \big] \, X_{i',\ell_{i'}} \big] \, 
	X_{i,\ell_i} \, X_{i',\ell_{i'}} \\
	&= \sum_{\substack{\bell\ \text{multi-indexes} \\
	\ell_k = 0 \, \forall k \not\in \{i,i'\}}}
	\bbE_{i'} \, \bbE_{i} \big[ f(\omega) \, X_{\bell} \big] \, X_{\bell} \,.
\end{split}
\end{equation*}
Iterating the same argument for all indexes
in a given finite set $\bbT_n = \{i_1, \ldots, i_n\} \subseteq \bbT$,
since $\bbE_{i_1} \bbE_{i_2} \cdots\, \bbE_{i_n} [\,\cdot\,] 
= \bbE[\,\cdot\,|\,\cF_{\bbT \setminus \bbT_n}]$
by Fubini's theorem, we obtain
\begin{equation} \label{eq:fquasin}
	f(\omega) = \sum_{\substack{\bell \ \text{multi-indexes} \\
	\text{supported in } \bbT_n}}
	\bbE\big[ f(\omega) \, X_{\bell} \,\big|\, \cF_{\bbT \setminus \bbT_n} \big] \, X_{\bell} \,.
\end{equation}
If $\bbT$ is finite, we can take $\bbT_n = \bbT$ and we obtain our goal \eqref{eq:chaos-gen},
since $\bbE[ \,\cdot \,|\, \cF_{\bbT \setminus \bbT_n} ] = \bbE[\cdot]$.

When $\bbT$ is infinite, 
writing $\bbT = \bigcup_{n\in\N} \bbT_n$
for finite increasing sets $\bbT_n$, we have by \eqref{eq:fquasin}
\begin{equation} \label{eq:fquasin2}
	\bbE [ f(\omega) \,|\, \cF_{\bbT_n}] = \sum_{\substack{\bell \ \text{multi-indexes} \\
	\text{supported in } \bbT_n}}
	\bbE\big[ f(\omega) \, X_{\bell} \big] \, X_{\bell} 
\end{equation}
because $\bbE [ \, \bbE[ \,\cdot \,|\, \cF_{\bbT \setminus \bbT_n} ] \,|\, \cF_{\bbT_n}]
= \bbE[\cdot]$ by Fubini's theorem. Since
$\bbE[ f(\omega) \, |\, \cF_{\bbT_n}] \to f(\omega)$
in $L^2$ as $n\to\infty$, by martingale convergence,
our goal \eqref{eq:chaos-gen} follows by \eqref{eq:fquasin2}.
\end{proof}

\subsection{Proof of Theorem~\ref{th:hyper}}

Let us prove \eqref{eq:hyper0}. 
By Lemma~\ref{th:tensor}, any $f(\omega) \in L^2$ satisfying Assumption-\ref{hyp:gen2}
is a multi-linear polynomial in the sequence of ensembles $(\mathscr{X}_i)_{i\in \bbT}$.
As a consequence, in the language of \cite[Definition~3.9]{MOO},
proving the bound \eqref{eq:hyper0} amounts to showing that
\emph{the sequence of ensembles $(\mathscr{X}_i)_{i\in \bbT}$ is $(2,q,\eta_q)$-hypercontractive}.

We first consider the case when $\bbT$ is finite
and all $\cV_i$'s have finite dimension, so each ensemble $\mathscr{X}_i$
consists of finitely many random variables. This is the setting considered in \cite[Proposition~3.11]{MOO},
by which it suffices to show that 
\emph{each individual family $\mathscr{X}_i$ is $(2,q,\eta_q)$-hypercontractive},
i.e.\ the bound \eqref{eq:hyper-gen} holds
(the proof is similar to \cite[Lemma~5.3]{Jan}).
But this is precisely
the way we have fixed $\eta_q \in (0,1)$, see Lemma~\ref{th:hyp},
hence \eqref{eq:hyper0} is proved in this case.

We next consider the case when either $\bbT$ is infinite or
some $\cV_i$ has infinite dimension. Let us write $\bbT = \bigcup_{n\in\N} \bbT_n$
as the increasing union of finite sets $\bbT_n$.
For each $n\in\N$, we define $f_n(\omega) \in L^2$
by restricting the sum in \eqref{eq:chaos-gen} to multi-indexes
$\bell = (\ell_i)_{i\in\bbT}$ supported in $\bbT_n$ and
with all components satisfying $\ell_i \le n$.
We already proved that \eqref{eq:hyper0} holds for $f_n$:
\begin{equation}\label{eq:quasilim}
	\| T^{\eta_q} f_n \|_q \le  \| f_n \|_2 
	\qquad \forall n \in\N \,,
\end{equation}
and it remains to let $n\to\infty$.
By construction $f_n \to f$ in $L^2$, and since $T^\eta$ is a bounded operator,
we also have $T^{\eta_q} f_n \to T^{\eta_q} f$ in $L^2$,
in particular $|T^{\eta_q} f_n|^q \to |T^{\eta_q} f|^q$ in probability.
Taking the limit $n\to\infty$ in \eqref{eq:quasilim} we then obtain, by Fatou,
\begin{equation*}
	\| T^{\eta_q} f \|_q \le \liminf_{n\to\infty}
	\| T^{\eta_q} f_n \|_q \le \lim_{n\to\infty}  \| f_n \|_2 = \|f\|_2 \,,
\end{equation*}
which completes the proof of \eqref{eq:hyper0}.

\smallskip

The proof just given shows that the bound \eqref{eq:hyper0} holds not just for~$f$
but for any multi-linear polynomial in the sequence of ensembles $(\mathscr{X}_i)_{i\in \bbT}$,
which includes any linear combination of components $f_I$ such as $f'$ in \eqref{eq:f'}.

If we now consider $f'_d = \sum_{|I| \le d} \alpha_I \, f_I$ with degree at most~$d$,
\emph{we can define $T^{1/\eta_q} \, f'_d$ even though $1/\eta_q > 1$},
see \eqref{eq:Teta}, and we have the identity $f'_d = T^{\eta_q} \, \big(T^{1/\eta_q} \, f'_d\big)$.
Applying \eqref{eq:hyper0} with $f$ replaced by $T^{1/\eta_q} \, f'_d$,
which is still a linear combination of $f_I$'s, we get
\begin{equation*}
	\|f'_d\|_q^2 \le \|T^{1/\eta_q} \, f'_d\|_2^2
	= \sum_{I \subseteq \bbT \colon |I| \le d} \, \frac{1}{\eta_q^{2|I|}} \, \alpha_I^2 \, \| f_I \|_2^2
	\le  \frac{1}{\eta_q^{2d}} \, \sum_{I \subseteq \bbT \colon
	|I| \le d} \alpha_I^2 \, \| f_I \|_2^2
	= \frac{1}{\eta_q^{2d}} \, \|f'\|_2^2 \,,
\end{equation*}
which is precisely \eqref{eq:hyper}.
\qed

\smallskip

\end{document}